\documentclass{article}
\usepackage{graphicx} 
\usepackage{hyperref}
\usepackage{amsmath,amssymb,color,amsthm,enumitem}
\usepackage{comment}
\usepackage{indentfirst}
\usepackage[margin=1.2in]{geometry}
\allowdisplaybreaks

\theoremstyle{plain}
\newtheorem{theorem}{Theorem}[section]
\newtheorem{lemma}[theorem]{Lemma}
\newtheorem{corollary}[theorem]{Corollary}
\newtheorem{proposition}[theorem]{Proposition}

\theoremstyle{definition}
\newtheorem{definition}[theorem]{Definition}
\newtheorem{claim}[theorem]{Claim}
\DeclareMathOperator{\osh}{osh}
\DeclareMathOperator{\sh}{sh}
\DeclareMathOperator{\st}{st}

\newcommand{\cF}{\mathcal{F}}
\newcommand{\cG}{\mathcal{G}}
\newcommand{\cS}{\mathcal{S}}
\newcommand{\cP}{\mathcal{P}}
\newcommand{\tm}{t_{min}}
\newcommand{\slm}{$\preceq$-minimal}
\newcommand\ceil[1]{\left\lceil#1\right\rceil}
\newcommand\floor[1]{\left\lfloor#1\right\rfloor}
\renewcommand{\emptyset}{\varnothing}

\title{More Shattering News}
\author{Attila Sali\\HUN-REN Alfr\'ed R\'enyi Institute of Mathematics and \\Department of Computer Science, BUTE \and Jun Yan\thanks{Supported by
ERC Advanced Grant 883810.}\\ Mathematical Institute, University of Oxford} 

\begin{document}

\maketitle

\begin{abstract}
An ordered variant of the well-known set theory concept of shattering was introduced by Anstee, R\'onyai, and Sali in~\cite{anstee2002shattering}. In this paper, we prove several new results related to order shattering. Given a family $\cF$ of subsets of $[n]$, we show that $\osh(\cF)$, the family of all sets order shattered by $\cF$, coincides with $T(\cF)$, the family obtained from $\cF$ by the down-shift operation. We then give a full characterization of all sets that can be order shattered by some $\ell$-Sperner family. Finally, we completely determine $\osh\left(\binom{[n]}{a}\cup\binom{[n]}{b}\right)$.
\end{abstract}

\section{Introduction}
Throughout the paper, we will use the standard notation $[n]$ to denote the set $\{1,2,\ldots,n\}$. For all integers $a,b$, we use $[a,b]$ to denote the set $\{a,a+1,a+2,\ldots,b\}$. In the degenerate case when $a>b$, $[a,b]$ is the empty set. For any set $A$, $\binom{A}{k}$ denotes the collection of all $k$-element subsets of $A$, and $\mathcal{P}(A)$ denotes the set of all subsets of $A$. 

We begin by recalling the set theory concept of shattering. 
\begin{definition}\label{def:shatter}
Let $S\subseteq [n]$ and let $\cF\subseteq\cP([n])$ be a family of subsets of $[n]$. 
\begin{itemize}
    \item $\cF$ \textit{shatters} $S$ if $\{F\cap S:F\in\cF\}=\cP(S)$. In other words, every subset of $S$ is the intersection of $S$ with some set in $\cF$. 
    \item The set of all sets shattered by $\cF$ is denoted by
\[\sh (\cF)=\{S\subseteq [n]:\cF\hbox{ shatters }S\}.\]
\end{itemize}
\end{definition}

It is clear from definition that $\sh(\cF)$ is a \textit{downset}, which means that if $S\in \sh(\cF)$ and $S'\subseteq S$, then $S'\in\sh(\cF)$. The well-known Sauer Inequality states that $|\sh(\cF)|\geq|\cF|$.

A more restrictive concept of strongly traced sets was introduced by Bollobás, Leader, and Radcliffe in \cite{bollobas1989reverse,bollobas1995defect}.
\begin{definition}[\cite{bollobas1989reverse,bollobas1995defect}]\label{def:stronglytraced}
Let $S\subseteq [n]$ and let $\cF\subseteq\cP([n])$ be a family of subsets of $[n]$. 
\begin{itemize}
    \item $S$ is \textit{strongly traced} by $\cF$ if there exists a set $B\subset[n]\setminus S$ such that $\{B\cup H: H\subseteq S\}\subseteq\cF$.
    \item The collection of all sets strongly traced by $\cF$ is denoted by $\st(\cF)$.
\end{itemize}
\end{definition}
It is clear that $\st(\cF)$ is also a downset. Furthermore, Bollobás, Leader, and Radcliffe proved the Reverse Sauer Inequality, which states that $|\st(\cF)|\le |\cF|$.

An intermediate concept of order shattering was introduced by Anstee, R\'onyai, and Sali in~\cite{anstee2002shattering}.
\begin{definition}[\cite{anstee2002shattering}]\label{def:osh-orig}
Let $S\subseteq [n]$ and let $\cF\subseteq\cP([n])$ be a family of subsets of $[n]$. 
\begin{itemize}
\item We inductively define what it means for $S$ to be \textit{order shattered} by $\cF$ as follows. 

If $S=\emptyset$, then $S$ is \textit{order shattered} by $\cF$ as long as $\cF\not=\emptyset$. 

If $|S|=k\geq1$, let the elements in $S$ be $s_1< s_2< \cdots < s_k$. We say that $S$ is \textit{order shattered} by $\cF$ if there exist disjoint families $\widetilde{\cF_0},\widetilde{\cF_1}\subseteq\cF$, each of size $2^{|S|-1}$, so that if we let $T=[s_k+1,n]$ (possibly $T=\emptyset$), then the following hold.
\begin{enumerate}[label=\alph*)]
    \item $T\cap C =T\cap D \hbox{ for all }C\in\widetilde{\cF_0}$ and $D\in \widetilde{\cF_1}$.
    \item $\{s_k\}\cap C =\emptyset$ and $\{s_k\}\cap D=\{s_k\}\hbox{ for all }C\in\widetilde{\cF_0}$ and $D\in \widetilde{\cF_1}$.
    \item Each of $\widetilde{\cF_0}$ and $\widetilde{\cF_1}$ individually order shatters $S\setminus\{s_k\}$.
\end{enumerate}

\item The set of all sets order shattered by $\cF$ is denoted by
\[\osh (\cF)=\{S\subseteq [n]:\cF\hbox{ order shatters }S\}.\]
\end{itemize}
\end{definition}
It follows from the definition that $\st(\cF)\subseteq\osh(\cF)\subseteq\sh(\cF)$, $\osh(\cF)$ is a downset,
$\osh(\osh(\cF))=\osh(\cF)$, and $\osh(\cF)=\osh(\cF^c)$, where $\cF^c$ is the family consisting of the complements of the sets in $\cF$.  Moreover, by~{\cite[Theorem 1.4]{anstee2002shattering}}, $|\osh(\cF)|=|\cF|$, which in particular proves the Reverse Sauer Inequality. 

The following is a non-inductive reformulation of Definition~\ref{def:osh-orig} that is often easier to work with.
\begin{proposition}\label{def:osh-direct}
A set $S\subseteq[n]$ with elements $s_1< s_2< \cdots < s_k$ is order shattered by $\cF\subseteq\cP([n])$ if and only if there exists a subfamily $\mathcal{G}=\{G_1,G_2,\ldots, G_{2^k}\}\subseteq\mathcal{F}$ of size $2^k$ such that the following hold.
\begin{enumerate}[label=\textup{\roman*)}]
    \item\label{osh:1} For all $i\in[k]$ and $j\in[2^k]$, $s_i\in G_j$ if and only if \[j\in\bigcup_{\ell=1}^{2^{k-i}}[\ell\cdot2^i-2^{i-1}+1,\ell\cdot 2^i].\]
    \item\label{osh:2} For all $i\in[k]$, $\ell\in[2^{k-i}]$, and all $(\ell-1)\cdot 2^i<a<b\le\ell\cdot 2^i$, \[G_a\cap[s_i+1,n]=G_b\cap[s_i+1,n].\]
\end{enumerate}
\end{proposition}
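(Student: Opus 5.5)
We argue by induction on $k=|S|$, proving both directions at once. For $k=0$ the conditions are vacuous and say only that $\cF$ contains some set $G_1$, which is exactly the base case of Definition~\ref{def:osh-orig}. Now let $k\ge 1$. Index the $2^k$ sets as $j\in[2^k]$ and split into the lower half $[1,2^{k-1}]$ and the upper half $[2^{k-1}+1,2^k]$. The key observation is that conditions \ref{osh:1} and \ref{osh:2} for $S$, restricted to indices $i\le k-1$, are exactly conditions \ref{osh:1} and \ref{osh:2} for $S'=S\setminus\{s_k\}$ (with $2^{k-1}$ sets), applied separately to each half.

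\textbf{The index bookkeeping.} For $i\le k-1$, the union $\bigcup_{\ell=1}^{2^{k-i}}[\ell 2^i-2^{i-1}+1,\ell 2^i]$ splits into two parts. The part with $\ell\le 2^{k-1-i}$ lies in the lower half and is the analogous set for $S'$. The part with $\ell>2^{k-1-i}$ is that same set shifted by $2^{k-1}$. This works because $2^{k-1}$ is a multiple of $2^i$, and the same divisibility shows that the blocks $((\ell-1)2^i,\ell 2^i]$ in \ref{osh:2} never straddle the two halves. Therefore:
\begin{itemize}
\item for $i=k$, condition \ref{osh:1} says that $s_k\notin G_j$ on the lower half and $s_k\in G_j$ on the upper half;
\item for $i=k$, condition \ref{osh:2} (with the single block $\ell=1$) says that all $G_j$ share the same trace on $T=[s_k+1,n]$.
\end{itemize}

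\textbf{($\Leftarrow$)} Given $\cG$, set $\widetilde{\cF_0}=\{G_1,\dots,G_{2^{k-1}}\}$ and $\widetilde{\cF_1}$ equal to the upper half. These are disjoint since the $G_j$ are distinct (we read ``subfamily of size $2^k$'' as $2^k$ distinct sets). Conditions a) and b) are the $i=k$ statements above. Condition c) follows from the induction hypothesis, because each half, reindexed, satisfies the conditions for $S'$.

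\textbf{($\Rightarrow$)} Given $\widetilde{\cF_0}$ and $\widetilde{\cF_1}$, the induction hypothesis applied to each of them (as a family order shattering $S'$) produces an indexing $G_1,\dots,G_{2^{k-1}}$ of $\widetilde{\cF_0}$ and $G_{2^{k-1}+1},\dots,G_{2^k}$ of $\widetilde{\cF_1}$ satisfying the conditions for $S'$. One point needs care: the induction hypothesis yields a subfamily of size $2^{k-1}$, and since $|\widetilde{\cF_0}|=2^{k-1}$, that subfamily is all of $\widetilde{\cF_0}$; the same holds for $\widetilde{\cF_1}$. The concatenation then satisfies conditions \ref{osh:1} and \ref{osh:2} for $i\le k-1$ by the bookkeeping above, and for $i=k$ by a) and b). Disjointness of $\widetilde{\cF_0}$ and $\widetilde{\cF_1}$ gives $2^k$ distinct sets.

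\textbf{Main obstacle.} The only delicate step is the index bookkeeping: checking that the dyadic block structure for $S$ restricts cleanly to two shifted copies of the structure for $S'$. This reduces to the divisibility of $2^{k-1}$ by $2^i$ for $i\le k-1$, and I would verify it carefully by writing out the two ranges of $\ell$ explicitly.
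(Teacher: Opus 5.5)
Your proof is correct. The paper gives no proof of this proposition; it presents it simply as a non-inductive unfolding of Definition~\ref{def:osh-orig}. Your induction on $k$ is exactly the routine verification that unfolding requires: you split $[2^k]$ at $2^{k-1}$, which is a block boundary for every $i\le k-1$.

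Two details you could state explicitly. First, distinctness of the $G_j$, and hence disjointness of the two halves, is automatic, since condition i) makes $G_j\cap S$ encode $j-1$ in binary. Second, condition a) only equates traces on $[s_k+1,n]$ \emph{across} the two halves. To get condition ii) for $i=k$ within a single half, you pass through a member of the other half, which is nonempty.
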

The order of the sets in the family $\cG$ in Proposition~\ref{def:osh-direct} is called \emph{the standard order} of $\cG$. For the sake of convenience, whenever a family $\cG$ of size $2^{|S|}$ order shatters a set $S$, $\cG$ is assumed to be in the standard order given by Proposition~\ref{def:osh-direct}. The condition~\ref{osh:1} can also be restated as follows. For every $j\in[2^k]$, if $\chi_j\in\{0,1\}^k$ is the characteristic vector of the set $S\cap G_j$, then $\chi_j$ is the binary representation of the integer $j-1$.

A connection  between order shattering and Gröbner bases was established in \cite{anstee2002shattering}, which has led to fruitful interactions between algebra, symbolic computation, and combinatorics. See for example,  \cite{friedl2003order,ell-wide,hegedHus2003grobner,felszeghy2006lex}.

The present paper is organized as follows.
First, in Section~\ref{sec:shift}, we give another characterization of $\osh(\cF)$ using the well-known down-shift operator. More precisely, in Theorem~\ref{thm:shift=osh}, we prove that the down-shifted family $T(\cF)$ obtained from $\cF$, with the down-shifts carried out in the natural order, is exactly equal to $\osh(\cF)$. 

Then, in Section~\ref{sec:ellSperner}, we generalize a simple characterization given in \cite{anstee2002shattering} for sets that can be order shattered by an antichain (Sperner family). We prove Theorem~\ref{thm:lspernercriterion}, which provides an analogous characterization for sets that can be order shattered by an $\ell$-Sperner family.

Finally, in Section~\ref{sec:2levels}, we determine $\osh\left(\binom{[n]}{a}\cup\binom{[n]}{b}\right)$, the sets order shattered by the union of two complete, but not necessarily consecutive, levels of $\cP([n])$. This builds on previous work in~\cite{anstee2002shattering} that determined the sets order shattered by one complete level of $\cP([n])$, and in~\cite{ell-wide} by Friedl, Heged\H us, and Rónyai that determined the sets order shattered by the union of any number of consecutive levels of $\cP([n])$.

\section{Shifting}\label{sec:shift}
Shifting proofs in extremal set theory were popularized by Peter Frankl~\cite{frankl2018extremal}. In this paper, we consider the down-shift operation that was used for example in~\cite{Alo83,Ans88,survey}. 
\begin{definition}
Let ${\mathcal F}\subseteq\cP([n])$. 
\begin{itemize}
    \item For every $j\in[n]$ and $B\in\cF$, let 
\[T_j(B)=\begin{cases}B&\hbox{if }j\notin B\hbox{ or }B\setminus\{j\}\in{\mathcal F}\\ B\setminus\{j\}&\hbox{if }j\in B\hbox{ and }B\setminus\{j\}\notin {\mathcal F}\end{cases},\]
and let $T_j({\mathcal F})=\{ T_j(B)\,:\,B\in{\mathcal F}\}$.
    \item The \emph{shifted family} $T({\mathcal F})$ is the family obtained by applying $T_1,T_2,\ldots,T_n$ in this order to $\cF$. 
\end{itemize}
\end{definition}
Note that for every $j\in[n]$, $T_j$ is injective, $|T_j({\mathcal F})|=|{\mathcal F}|$, and $T_j(T({\mathcal F}))=T({\mathcal F})$.
Thus, $|T({\mathcal F})|=|{\mathcal F}|$ and $T({\mathcal F})$ is a downset.

Recall from earlier that $|\osh(\mathcal{F})|=|\mathcal{F}|$ and $\osh(\mathcal{F})$ is also a downset for every ${\mathcal F}\subseteq\cP([n])$. In view of this, our following theorem stating that $\osh(\cF)=T(\cF)$ may not be surprising.
\begin{theorem}\label{thm:shift=osh}
    Let ${\mathcal F}\subseteq\cP([n])$, and let $T({\mathcal F})$ be the shifted family obtained from $\mathcal{F}$ by successively applying shifts $T_j$ for $j=1,2,\ldots,n$ in this order. Then,
    \[
    T({\mathcal F})=\osh(\mathcal{F}).
    \]
\end{theorem}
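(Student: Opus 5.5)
Since $|T(\cF)|=|\cF|=|\osh(\cF)|$ (the latter by \cite[Theorem 1.4]{anstee2002shattering}), it suffices to prove one inclusion. The plan is to show $T(\cF)\subseteq\osh(\cF)$, by induction on $n$ using the recursive structure of Definition~\ref{def:osh-orig}, which splits on the \emph{largest} element of $S$. Because the shifts are applied in the order $T_1,\ldots,T_n$, the last shift $T_n$ acts on the largest coordinate, which matches this recursion.

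\textbf{Setup.} Write $\cF=\cF_0\cup\{F\cup\{n\}:F\in\cF_1\}$ with $\cF_0,\cF_1\subseteq\cP([n-1])$. The shifts $T_1,\ldots,T_{n-1}$ never touch element $n$, and a set containing $n$ is shifted at $j<n$ only if the corresponding set with $j$ removed is absent among the sets containing $n$. Hence, with $T'$ denoting the shift on $[n-1]$, applying $T_1,\ldots,T_{n-1}$ to $\cF$ gives
\[
\cA=T'(\cF_0),\qquad \cB=T'(\cF_1),
\]
and the resulting family is $\cA\cup\{B\cup\{n\}:B\in\cB\}$. Both $\cA$ and $\cB$ are downsets in $[n-1]$. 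Applying $T_n$ then yields
\[
T(\cF)=(\cA\cup\cB)\cup\{C\cup\{n\}:C\in\cA\cap\cB\}.
\]
By induction, $\cA=\osh(\cF_0)$ and $\cB=\osh(\cF_1)$, viewed on ground set $[n-1]$.

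\textbf{Case $n\notin S$.} Here $S\in\cA\cup\cB$. Order shattering of $S\subseteq[n-1]$ by $\cF_0$ inside $[n-1]$ transfers to order shattering by the copy of $\cF_0$ in $\cF$: the extra coordinate $n$ is constant ($n\notin$) on all these sets, so the agreement conditions on $[s_i+1,n]$ in Proposition~\ref{def:osh-direct} still hold. The same works for $\cF_1$, with $n$ present in every set.

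\textbf{Case $n\in S$.} Write $S=S'\cup\{n\}$ with $S'\in\cA\cap\cB=\osh(\cF_0)\cap\osh(\cF_1)$. Take $\widetilde{\cF_0}$ to be a witness family in $\cF_0$ order shattering $S'$, and $\widetilde{\cF_1}$ the corresponding witness in $\cF_1$ with $n$ added to each set. Here $T=[n+1,n]=\emptyset$, so condition a) is vacuous, b) holds by construction, and c) is the induction hypothesis (again extending from $[n-1]$ to $[n]$ as above). This gives $S\in\osh(\cF)$.

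\textbf{Expected obstacles.} The main care point is the setup step: that the first $n-1$ shifts act independently on the two halves. This requires checking that $T_j(B)$ depends only on whether $B\setminus\{j\}\in\cF$, and that this set lies in the same half as $B$. Beyond that, the proof only needs the routine fact that order shattering within $\cP([n-1])$ is preserved when a fixed coordinate $n$ is adjoined to, or omitted from, all sets. The cardinality equality $|T(\cF)|=|\osh(\cF)|$ then upgrades the inclusion to equality. The base case $n=0$ (or $\cF=\emptyset$) is trivial.
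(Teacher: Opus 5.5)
Your proof is correct, but it runs in the opposite direction from the paper's.

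The paper proves $\osh(\cF)\subseteq T(\cF)$. It fixes a standard-order witness family $\cG$ for $S$ and shows, by induction on the shift index $h$, that every truncation $G_j\cap(S_i\cup[h,n])$ survives in the partially shifted family. The key point is that at $h=s_{i+1}$ the truncation of the partner set $G_{j-2^i}$ is already present, which blocks the removal of $s_{i+1}$. Taking $h=n+1$ and $j=2^k$ then gives $S\in T(\cF)$.

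You instead prove $T(\cF)\subseteq\osh(\cF)$ by induction on $n$. You use that $T_1,\ldots,T_{n-1}$ act independently on the halves $\cF_0,\cF_1$, and that $T_n$ then yields exactly $(\mathcal{A}\cup\mathcal{B})\cup\{C\cup\{n\}:C\in\mathcal{A}\cap\mathcal{B}\}$. This is the same top-element recursion that Definition~\ref{def:osh-orig} builds into $\osh$.

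Both arguments close with $|\osh(\cF)|=|\cF|$ from \cite{anstee2002shattering}, but yours does not actually need it, because the reverse inclusion is just as easy in your framework:
\begin{itemize}
\item If $n\notin S\neq\varnothing$, condition~\ref{osh:2} with $i=k$ forces all witness sets to agree on $n$, so they lie in a single half.
\item If $n\in S$, the two subfamilies in Definition~\ref{def:osh-orig} lie in different halves.
\end{itemize}
So $T$ and $\osh$ obey identical recursions and coincide by induction outright. This also reproves, by induction, $|\osh(\cF)|=|\osh(\cF_0)|+|\osh(\cF_1)|=|\cF|$.

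The paper's route avoids induction on the ground set. Instead it gives an explicit trace of how a single witness family certifies $S$ throughout the shifting process.
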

\begin{proof}
    Since $|T({\mathcal F})|=|\osh(\mathcal{F})|=|\cF|$, it is enough to show that $T({\mathcal F})\supseteq\osh(\mathcal{F})$.
Suppose then that $S=\{s_1,s_2,\ldots ,s_k\}\in\osh(\mathcal{F})$, with $s_1<s_2<\cdots<s_k$. By Proposition~\ref{def:osh-direct}, there exists a subfamily $\mathcal{G}=\{G_1,G_2,\ldots, G_{2^k}\}\subseteq\mathcal{F}$ of size $2^k$ in standard order satisfying~\ref{osh:1} and~\ref{osh:2}.

    For convenience, let $s_0=0$, $S_0=\emptyset$, $S_i=\{s_1,s_2,\ldots ,s_i\}$ for every $i\in[k]$, and $s_{k+1}=n+1$.
\begin{claim}\label{claim}
For every $h\in[n+1]$, let $0\leq i\leq k$ be the unique index satisfying $s_i<h\leq s_{i+1}$. Then, for every $j\in[2^k]$, \[G_j\cap(S_i\cup[h,n])\in T_{h-1}(T_{h-2}(\cdots (T_1(\mathcal{F}))\cdots)).\]
\end{claim}
\begin{proof}[Proof of Claim~\ref{claim}]
    We use induction on $h$. The base case when $h=1$ simply states $G_j\in\cF$ for every $j\in[2^k]$, and is thus trivial. Now suppose $h\in[n]$, and inductively assume that $G_j\cap(S_i\cup[h,n])\in T_{h-1}(T_{h-2}(\cdots (T_1(\mathcal{F}))\cdots))$ for every $j\in[2^k]$. 
    
    First, suppose that $h<s_{i+1}$, so $s_i<h<h+1\leq s_{i+1}$. If $h\not\in G_j$, then we have 
    \[G_j\cap(S_i\cup[h+1,n])=T_h(G_j\cap(S_i\cup[h,n]))\in T_h(T_{h-1}(\cdots (T_1(\mathcal{F}))\cdots)).\] If $h\in G_j$, then by definition and using $h\not\in S_i\cup[h+1,n]$, 
    \[
    (G_j\cap(S_i\cup[h,n]))\setminus\{h\}=G_j\cap(S_i\cup[h+1,n]).
    \]
    From the definition of $T_h$, either $(G_j\cap(S_i\cup[h,n]))\setminus\{h\}\in T_{h-1}(T_{h-2}(\cdots (T_1(\mathcal{F}))\cdots))$, or $T_h(G_j\cap(S_i\cup[h,n]))=(G_j\cap(S_i\cup[h,n]))\setminus\{h\}$. In either case, we have 
    \[
    G_j\cap(S_i\cup[h+1,n])=(G_j\cap(S_i\cup[h,n]))\setminus\{h\}\in T_h(T_{h-1}(\cdots (T_1(\mathcal{F}))\cdots)).
    \]

Now suppose $h=s_{i+1}$, so $s_{i+1}<h+1\leq s_{i+2}$. If $h\not\in G_j$, then $h\not\in G_j\cap(S_i\cup[h,n])$, so we have  
\[G_j\cap(S_{i+1}\cup[h+1,n])=T_h(G_j\cap(S_i\cup[h,n]))\in T_h(T_{h-1}(\cdots (T_1(\mathcal{F}))\cdots)).\]
 On the other hand, if $h=s_{i+1}\in G_j$, then by~\ref{osh:1} and~\ref{osh:2} of Proposition~\ref{def:osh-direct}, $G_j\cap(S_i\cup[h,n])$ and $G_{j-2^i}\cap(S_i\cup[h,n])$ only differ in the element $s_{i+1}$, with the former containing it and the latter not. This implies that $(G_j\cap(S_i\cup[h,n]))\setminus\{h\}=G_{j-2^i}\cap(S_i\cup[h,n])$. By induction hypothesis, both $G_{j}\cap(S_i\cup[h,n])$ and $G_{j-2^i}\cap(S_i\cup[h,n])$ are in $T_{h-1}(T_{h-2}(\cdots (T_1(\mathcal{F}))\cdots))$, so the definition of $T_h$ gives 
\[G_j\cap(S_{i+1}\cup[h+1,n])=G_j\cap(S_i\cup[h,n])=T_h(G_j\cap(S_i\cup[h,n]))\in T_h(T_{h-1}(\cdots (T_1(\mathcal{F}))\cdots)),\]
proving the claim.
\renewcommand{\qedsymbol}{$\boxdot$}
\end{proof}
\renewcommand{\qedsymbol}{$\square$}

Recall that $S=S_k\subseteq G_{2^k}$. Apply Claim~\ref{claim} with $h=n+1$, $i=k$, and $j=2^k$, we get
\[S=S_k=G_{2^k}\cap(S_k\cup\varnothing)\in T_n(T_{n-1}(\cdots (T_1(\mathcal{F}))\cdots))=T(\cF),\]
which finishes the proof of Theorem~\ref{thm:shift=osh}. 
\end{proof}

\section{$\ell$-Sperner families}\label{sec:ellSperner}
\begin{definition}
Let $\ell$ be a positive integer. $\cS\subseteq\cP([n])$ is a \emph{$\ell$-Sperner family} if there does not exist distinct $S_1,S_2,\ldots,S_{\ell+1}\in\cS$ such that $S_1\subset S_2\subset\cdots\subset S_{\ell+1}$.
\end{definition}
A 1-Sperner family is simply called a Sperner family, and is also frequently referred to in the literature as an \emph{antichain}. In~\cite{anstee2002shattering}, Anstee, R\'onyai, and Sali gave a simple characterization of all sets that can be order shattered by a Sperner family. 
\begin{theorem}[{\cite[Theorem 3.1]{anstee2002shattering}}]
Let $A=\{a_1,a_2,\ldots,a_k\}$ with $a_1<a_2<\cdots<a_k$. Then, there exists a Sperner family $\cS$ with $A\in\osh(\cS)$ if and only if
\[\sum_{i=1}^k\frac1{2^{a_i-i}}<1.\]
\end{theorem}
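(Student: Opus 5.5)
The plan is to prove both directions by reading the order-shattering structure of Proposition~\ref{def:osh-direct} as a binary tree. The free coordinates $X=[n]\setminus A$ then play the role of code letters, and the condition $\sum_i 2^{-(a_i-i)}<1$ becomes a strict Kraft inequality. The key counting fact is that exactly $a_i-i$ elements of $X$ lie below $a_i$, namely the set $X_i=X\cap[1,a_i-1]$. These sets are nested, $X_1\subseteq X_2\subseteq\cdots\subseteq X_k$.

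\emph{Necessity.} Let $\cG=\{G_1,\dots,G_{2^k}\}\subseteq\cS$ order shatter $A$, in standard order, with $\cS$ Sperner.
\begin{enumerate}[label=\arabic*.]
\item Fix $i$ and a block of size $2^i$ in the standard order. Its lower half omits $a_i$ and its upper half contains $a_i$. By~\ref{osh:2}, all sets in the block agree on $[a_i+1,n]$.
\item Take $C$ from the lower half and $D$ from the upper half whose traces on $A\cap[1,a_i-1]$ coincide. Then $C\cap A\subseteq D\cap A$, so the Sperner property forces $C\not\subseteq D$. The witness of this must be an element of $X_i$ lying in $C\setminus D$.
\item From these witnesses, extract words $w_0,w_1,\dots,w_k$ over the free coordinates, where $w_i$ is the pattern of $G_{2^k-2^{i-1}}$ (or a suitably chosen representative) restricted to $X_i$. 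The aim is to show they are pairwise prefix-incompatible when the $X$-coordinates are read in increasing order, i.e.\ no $w_i$ extends $w_{i'}$ for $i'<i$.
\item Kraft's inequality then gives $\sum_{i\ge 0}2^{-|w_i|}\le 1$ with $|w_i|=a_i-i$. The extra word $w_0$ makes the sum over $i\ge1$ strictly less than $1$.
\end{enumerate}
I would try to run this as an induction on $k$ using Definition~\ref{def:osh-orig}. The families $\widetilde{\cF_0}$ and $\widetilde{\cF_1}$ both order shatter $A\setminus\{a_k\}$ and agree above $a_k$. One then shows that the subcodes obtained recursively from the two halves must live in disjoint cylinders of $\{0,1\}^{X_k}$, except for the one forced relation coming from the pair in step 2.

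\emph{Sufficiency.} Since $|X_1|\le|X_2|\le\cdots$ and the Kraft sum is $<1$, the standard greedy (canonical) code assignment gives the following. There are prefix-free words $w_1,\dots,w_k$, with $w_i\in\{0,1\}^{X_i}$, together with an extra word $w_0$ prefix-incompatible with all of them; $w_0$ can be taken of length $|X_k|$. Build $G_j$ from its prescribed $A$-pattern $\chi_j$ (binary of $j-1$). If $i$ is the largest index with $a_i\notin G_j$, put the pattern $w_i$ on $X_i$ and fill the remaining free coordinates with a fixed default. If there is no such $i$, i.e.\ $j=2^k$, use $w_0$.

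Now check the two conditions:
\begin{itemize}
\item Condition~\ref{osh:2} holds because sets in a common $2^i$-block share the same highest zero above $a_i$, or none. They therefore carry identical data above $a_i$, and we need to arrange the defaults on $X\cap[a_i+1,n]$ to depend only on that highest-zero index.
\item The Sperner property holds because two distinct sets either have different highest-zero indices, and then prefix-incompatible codewords give incomparability, or they share it and differ on $A$ in incomparable ways. The last case needs care and may require refining the code per block recursively, again by induction on $k$ via Definition~\ref{def:osh-orig}.
\end{itemize}

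The main obstacle I expect is step 3 of necessity: pinning down exactly which $k+1$ sets yield a genuinely prefix-free family of words, so that the inequality comes out strict. I would first attempt the recursive induction, tracking for each subfamily the subset of $\{0,1\}^{X_k}$ of patterns it occupies and proving that its measure is at least $1-\sum_{i}2^{-(a_i-i)}$ plus a positive remainder.
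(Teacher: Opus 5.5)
\textbf{Both directions have a genuine gap.} The two steps you flag as needing care carry all the content, and the concrete choices you make there fail.

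\textbf{Necessity.} Step~2 is correct. Step~3 is not: the words taken from the sets with traces $A\setminus\{a_i\}$ need not be prefix-free. Take $n=7$, $A=\{3,5,7\}$ and, in standard order, $G_1=\{1,4,6\}$, $G_2=\{2,3,4,6\}$, $G_3=\{1,5,6\}$, $G_4=\{2,3,5,6\}$, $G_5=\{2,4,7\}$, $G_6=\{1,3,4,7\}$, $G_7=\{1,5,7\}$, $G_8=\{2,3,5,7\}$. This family is Sperner and satisfies Proposition~\ref{def:osh-direct}. Yet $G_7\cap X_1=\{1\}=G_6\cap X_1$, so $w_1$ is a prefix of $w_2$.

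The obstruction is structural. The Sperner condition is non-vacuous only for pairs with nested traces $P\subsetneq P'$. For such a pair it gives a point of $G_P\setminus G_{P'}$ in $X_t$, where $a_t=\max(P'\setminus P)$, i.e.\ in the \emph{longer} prefix. Prefix-freeness of words of levels $p<q$ instead needs a difference inside the \emph{shorter} prefix $X_p$. Your representatives of different levels have incomparable traces, so nothing forces such a difference.

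Note also that if $a_1\ge2$ and no two elements of $A$ are consecutive, then $a_i-i\ge i$ and the inequality holds automatically. All the difficulty sits in runs of consecutive elements, where $X_i$ stops growing and one must produce many distinct patterns of the same length. The paper proves the statement as the $\ell=1$ case of Theorem~\ref{thm:lspernercriterion}. It gets these patterns from a maximal chain of traces inside a run plus pigeonhole on the patterns in the preceding gap (Lemma~\ref{lemma:j=1no}). It then inducts on the number of runs, using Corollary~\ref{cor:shiftback} to trade a long last run for an extension of the previous run. Your recursive measure invariant is never specified, so it does not replace this.

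\textbf{Sufficiency.} The construction is not Sperner once $k\ge2$. All $2^{h-1}$ sets whose largest missing element of $A$ is $a_h$ receive identical free coordinates. For $h=2$, the sets with traces $A\setminus\{a_1,a_2\}$ and $A\setminus\{a_2\}$ are therefore nested, so your claim that such sets ``differ on $A$ in incomparable ways'' is false.

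Your check of condition~\ref{osh:2} also overlooks the top blocks. $G_{2^k}$ and $G_{2^k-1}$ must agree on $[a_1+1,n]$, so $w_0$ must coincide with the default data on $X_k\setminus X_1$ and cannot be an arbitrary word of length $|X_k|$.

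Repairing the Sperner failure means each class must itself be a Sperner family order shattering $\{a_1,\ldots,a_{h-1}\}$ using only coordinates of $X_{h-1}$, since its members agree above $a_{h-1}$. It must also avoid containments with the other classes. That is the original problem again, and when $a_h=a_{h-1}+1$ there is no new coordinate to separate the classes.

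The paper resolves exactly this point differently. It orders the patterns of the preceding gap so that an earlier one is never contained in a later one. To the $i$-th pattern it attaches all subsets of the run of prescribed sizes, as in Lemmas~\ref{lemma:j=1yes}, \ref{lemma:extend} and~\ref{lemma:shiftandextend}, rather than using one codeword per class.
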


By adapting their methods, we generalize the result above and fully characterize the sets that can be order shattered by an $\ell$-Sperner family. 
\begin{theorem}\label{thm:lspernercriterion}
Let $A=\{a_1,a_2,\ldots,a_k\}$ with $a_1<a_2<\cdots<a_k$, and let $\ell$ be a positive integer. Then, there exists an $\ell$-Sperner family $\cS$ with $A\in\osh(\cS)$ if and only if
\[\sum_{i=1}^k\frac1{2^{a_i-i}}<\ell.\]
\end{theorem}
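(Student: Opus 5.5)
The plan is to prove both directions by induction on $k$. The induction follows the recursive structure of Definition~\ref{def:osh-orig}, exactly as in the proof of \cite[Theorem 3.1]{anstee2002shattering}. Two reductions come first. By condition~\ref{osh:2} of Proposition~\ref{def:osh-direct} with $i=k$, all $2^k$ sets $G_j$ of an order-shattering subfamily $\cG$ agree on $[a_k+1,n]$. Deleting this common part preserves all inclusions and non-inclusions among the $G_j$, so we may assume $n=a_k$. Moreover, a family is $\ell$-Sperner exactly when its longest chain has at most $\ell$ members. So the theorem is equivalent to the following statement about $h(A)$, the minimum over families $\cG$ order shattering $A$ of the maximum length of a chain in $\cG$:
\[h(A)=\Big\lfloor \sum_{i=1}^k 2^{-(a_i-i)}\Big\rfloor+1 .\]
Write $\sigma(A)=\sum_i 2^{-(a_i-i)}$ and $g_i=a_i-a_{i-1}-1\ge 0$ for the number of free elements strictly between consecutive elements of $A$ (with $a_0=0$). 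Then $a_i-i=g_1+\cdots+g_i$, so $\sigma(A)=\sum_i 2^{-(g_1+\cdots+g_i)}$ depends only on the gaps.

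For the recursive step, I split $\cG$ into its two halves $\widetilde{\cF_0},\widetilde{\cF_1}$. Each half order shatters $A'=A\setminus\{a_k\}$. All sets in the half $\widetilde{\cF_0}$ have the same trace $B_0$ on the gap $[a_{k-1}+1,a_k-1]$, and all sets in $\widetilde{\cF_1}$ have the same trace $B_1$ there; the sets in $\widetilde{\cF_0}$ avoid $a_k$ and those in $\widetilde{\cF_1}$ contain it. No set of $\widetilde{\cF_1}$ lies inside a set of $\widetilde{\cF_0}$. A set of $\widetilde{\cF_0}$ can lie inside a set of $\widetilde{\cF_1}$ only if $B_0\subseteq B_1$ and the inclusion holds on $[a_{k-1}]$. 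Chains in $\cG$ therefore consist of a chain in the $\widetilde{\cF_0}$ half followed by a chain in the $\widetilde{\cF_1}$ half. A single integer $h$ is too coarse to run the induction, so I would strengthen the hypothesis to a \emph{weighted} LYM-type invariant. The natural candidate is: for a family $\cG$ order shattering $A$ inside $[a_k]$, a uniformly random maximal chain of $\cP([a_k])$ meets $\cG$ in expected number of sets at least $\sigma(A)$. Some maximal chain then meets $\cG$ at least $\lceil\sigma(A)\rceil$ times, and one must upgrade this to $\lfloor\sigma\rfloor+1$ when $\sigma$ is an integer. I would try to prove the expectation bound by conditioning on where the random chain crosses the gap coordinates, which contributes a factor $2^{-g_k}$ to the "cross-over" term. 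The term $2^{-(a_k-k)}$ would then appear as the weight of the top set $G_{2^k}\supseteq A$ together with the recursively propagated weights.

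For sufficiency I would give an explicit greedy construction mirroring the sum. I process $i=k,k-1,\ldots,1$ and choose the traces on each gap so that the two halves at level $i$ are forced to be incomparable whenever that is affordable. Already $g_k\ge1$ suffices to make the two top halves incomparable: take $B_0=\{x\}$ and $B_1=\emptyset$ for some gap element $x$. The halves can only nest when the gaps are used up, and each nesting stacks chain lengths. I would then compute the chain length of this construction by an inductive bookkeeping of "fractional height" that reproduces $\sigma(A)$. The main obstacle is the necessity direction. The naive LYM inequality is very lossy, and pinning down the right strengthened inductive quantity, so that equality $\sigma(A)=\ell$ is excluded, is where I expect the real work to lie. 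If the random-chain invariant fails, my first fallback is to track, for each family, the vector of maximal chain lengths through each $G_j$ in standard order and induct on that profile.
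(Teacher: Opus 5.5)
\textbf{There is a genuine gap: your necessity argument rests on an invariant that is false, and your sufficiency argument is not actually given.}

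\textbf{Necessity.} The expected number of sets of $\cG$ met by a uniformly random maximal chain of $\cP([a_k])$ is the LYM sum $\sum_{G\in\cG}\binom{a_k}{|G|}^{-1}$, and this can be smaller than $\sigma(A)$. Take $A=[3,6]$, so $\sigma(A)=4\cdot 2^{-2}=1$. Let $\cG=\{B_C\cup C:C\subseteq[3,6]\}$, where $B_C=\{1,2\}$ if $|C|\le 1$, $B_C=\{1\}$ if $|C|=2$, and $B_C=\emptyset$ if $|C|\ge 3$. By Proposition~\ref{def:osh-direct} the traces on $[1,2]$ are unconstrained, so $\cG$ order shatters $A$ inside $[6]$. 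It has one set of size $2$, fourteen of size $3$ and one of size $4$, so its LYM sum is $\frac{2}{15}+\frac{14}{20}=\frac56<1$.

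Even if such a bound held, it could not finish the proof. Any $\ell$ full levels form an $\ell$-Sperner family with LYM sum exactly $\ell$, so an averaging bound can never separate $\sigma(A)=\ell$ from $\sigma(A)<\ell$. That boundary case is exactly the content of the theorem; Lemma~\ref{lemma:j=1no} is precisely the case $\sigma(A)=\ell$ for a single block. Your ``profile'' fallback is not yet an argument.

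What works is pigeonhole along one chain rather than averaging over all chains. For a block $[g+1,g+b]$, take a chain of $b+1$ traces on the block. These are spread over the $2^g$ possible traces on $[g]$, which gives $\lceil (b+1)/2^g\rceil=\lfloor b/2^g\rfloor+1$ nested sets. The paper then reduces several blocks to fewer by a constructive shift-back step (Lemma~\ref{lemma:shiftback}, Corollary~\ref{cor:shiftback}). This turns an $\ell$-Sperner family for $A$ into one for the set obtained by deleting the last block (of length $b_j$ after a gap $g_j$) and appending $\lceil (b_j+1)/2^{g_j}\rceil-1$ elements to the previous block.

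\textbf{Sufficiency.} Your sketch only covers the easy step. When $g_k\ge 1$, separating the two halves by a gap element correctly gives $h(A)=h(A\setminus\{a_k\})$; this is essentially Lemma~\ref{lemma:extend}. It matches the formula because $\sigma(A\setminus\{a_k\})$ is a multiple of $2^{-(a_{k-1}-k+1)}$, while the new term is strictly smaller than that.

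Inside a run of consecutive elements of $A$, however, the two halves have the same (empty) gap trace, so nothing can be separated locally. The traces on the gap preceding the run must be chosen globally, as a function of the whole trace on the run. For later runs they must also be meshed with the families already built for earlier blocks. The paper does this in two ways:
\begin{itemize}
\item It assigns the subsets of the gap, in order of decreasing size, to consecutive size-layers of the run (Lemma~\ref{lemma:j=1yes}).
\item For later runs, it uses the layered construction of Lemma~\ref{lemma:shiftandextend} and Corollary~\ref{cor:extend}.
\end{itemize}
Your ``inductive bookkeeping of fractional height that reproduces $\sigma(A)$'' specifies no such assignment and proves no chain bound, so this direction is also still open.
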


The proof of Theorem~\ref{thm:lspernercriterion} is quite technical. We begin with the following two lemmas that solve the case when $A$ is a set of consecutive integers, and illustrate the validity of the criterion. 
\begin{lemma}\label{lemma:j=1no}
Let $g$ be a positive integer. Then, there does not exist an $\ell$-Sperner family $\cS\subseteq\cP([n])$ such that \[[g+1,g+\ell\cdot2^g]\in\osh(\cS).\]
\end{lemma}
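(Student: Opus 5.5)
We argue by contradiction, using only the direct reformulation of order shattering in Proposition~\ref{def:osh-direct}. Suppose $\cS$ is $\ell$-Sperner and $A=[g+1,g+\ell\cdot 2^g]\in\osh(\cS)$. Write $k=\ell\cdot 2^g$ and $s_i=g+i$ for $i\in[k]$. By Proposition~\ref{def:osh-direct}, there is a subfamily $\cG=\{G_1,\ldots,G_{2^k}\}\subseteq\cS$ in standard order satisfying~\ref{osh:1} and~\ref{osh:2}.

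First we use only the top level $i=k$ of condition~\ref{osh:2}. There $2^{k-i}=1$, so the single block consists of all indices $1,\ldots,2^k$. Hence every $G_j$ has the same intersection $R$ with $[s_k+1,n]$. Since $A$ is a block of consecutive integers starting at $g+1$, we have $[n]=[g]\cup A\cup[s_k+1,n]$. So each $G_j$ is determined by two traces: $G_j\cap[g]$ and $G_j\cap A$. By~\ref{osh:1}, the map $j\mapsto G_j\cap A$ is a bijection onto $\cP(A)$. For $X\subseteq A$, let $G^X$ be the member of $\cG$ with trace $X$ on $A$, and set $f(X)=G^X\cap[g]$. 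Then $G^X=f(X)\cup X\cup R$.

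Next choose a maximal chain $\emptyset=X_0\subset X_1\subset\cdots\subset X_k=A$ in $\cP(A)$. It has $k+1=\ell\cdot 2^g+1$ members. The function $f$ takes values in $\cP([g])$, which has only $2^g$ elements. By pigeonhole, some value $Y\subseteq[g]$ is taken by at least $\ell+1$ of these sets, say $X_{t_1}\subset X_{t_2}\subset\cdots\subset X_{t_{\ell+1}}$. The corresponding sets $G^{X_{t_m}}=Y\cup X_{t_m}\cup R$ are then distinct members of $\cS$. They form a strict chain $G^{X_{t_1}}\subset\cdots\subset G^{X_{t_{\ell+1}}}$ of length $\ell+1$, contradicting that $\cS$ is $\ell$-Sperner.

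The only step needing care is the observation that the top level of~\ref{osh:2} forces all of $\cG$ to agree above $s_k$. Together with the consecutiveness of $A$, this means that outside $A$ the members of $\cG$ can differ only on $[g]$. Once that is in place, the pigeonhole count is exactly tight: $\ell\cdot 2^g+1>\ell\cdot 2^g$. This matches the boundary case $\sum_i 2^{-(a_i-i)}=k/2^g=\ell$ of Theorem~\ref{thm:lspernercriterion}. I do not expect any further obstacle.
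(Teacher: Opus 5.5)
Your proposal is correct and follows essentially the same route as the paper. You split each $G_j$ into a part in $[g]$, a trace on $A$ that ranges over all of $\cP(A)$, and a common part above $g+\ell\cdot 2^g$. You then take a chain of length $\ell\cdot 2^g+1$ in $\cP(A)$ and pigeonhole on the $2^g$ possible traces on $[g]$, which gives a chain of length $\ell+1$ in $\cS$.
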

\begin{proof}
Let $A=[g+1,g+\ell\cdot2^g]$, and suppose for a contradiction that there is an $\ell$-Sperner family $\cS$ satisfying $A\in\osh(\cS)$. Then, by Proposition~\ref{def:osh-direct}, there exist $G_1,\ldots,G_{2^{\ell\cdot2^g}}\in\cS$, along with a decomposition $G_i=B_i\cup C_i\cup D_i$ for every $i\in[2^{\ell\cdot2^g}]$, such that the following hold.
\begin{itemize}
    \item $B_i\subseteq[g]$ for every $i\in[2^{\ell\cdot2^g}]$.
    \item $\{C_i:i\in[2^{\ell\cdot2^g}]\}=\cP([g+1,g+\ell\cdot2^g])$.
    \item There exists $D\subseteq[g+\ell\cdot2^g+1,n]$, such that $D_i=D$ for every $i\in[2^{\ell\cdot2^g}]$.
\end{itemize}
In particular, we can find distinct $i_1,\ldots,i_{\ell\cdot2^g+1}\in[2^{\ell\cdot2^g}]$, such that $C_{i_1}\subset C_{i_2}\subset\cdots\subset C_{\ell\cdot2^g+1}$. Then, by pigeonhole, there exists distinct $j_1,\ldots,j_{\ell+1}\in\{i_1,\ldots,i_{\ell\cdot2^g+1}\}$, such that $B_{j_1}=B_{j_2}=\cdots=B_{j_{\ell+1}}$. It follows that $G_{j_1}\subset G_{j_2}\subset\cdots\subset G_{j_{\ell+1}}$, which contradicts that $\cS$ is an $\ell$-Sperner family. 
\end{proof}

\begin{lemma}\label{lemma:j=1yes}
For every positive integer $g$, there exists an $\ell$-Sperner family $\cS\subseteq\cP([g+\ell\cdot2^g-1])$ with \[[g+1,g+\ell\cdot2^g-1]\in\osh(\cS).\]
\end{lemma}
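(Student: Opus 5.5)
We will construct $\cS$ explicitly. Put $m=\ell\cdot2^g-1$, $n=g+m$ and $A=[g+1,n]$. For every $C\subseteq A$ we will choose some $B_C\subseteq[g]$ and set $G_C=B_C\cup C$. Then we take $\cS=\{G_C:C\subseteq A\}$. These $2^m$ sets are distinct, because their traces on $A$ are distinct.

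\emph{Step 1: $\cS$ order shatters $A$ for any choice of the $B_C$.} List the sets $G_C$ so that the characteristic vector of $C$ is the binary representation of $j-1$. Then condition~\ref{osh:1} of Proposition~\ref{def:osh-direct} holds by construction. For condition~\ref{osh:2}, write $s_i=g+i$. Two sets in the same block of size $2^i$ have the same values of the higher binary digits. Hence they agree on $[s_{i+1},n]=[s_i+1,n]$, since this interval lies entirely inside $A$ and contains no element of $[g]$. So $A\in\osh(\cS)$ whatever the $B_C\subseteq[g]$ are. The only real work is choosing the $B_C$ so that $\cS$ is $\ell$-Sperner.

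\emph{Step 2: choice of the $B_C$.} Enumerate $\cP([g])$ as $X_0,X_1,\ldots,X_{2^g-1}$ in order of non-increasing size, so $X_0=[g]$ and $X_{2^g-1}=\emptyset$. This order has the following property: if $t<t'$ and $X_t\subseteq X_{t'}$, then $X_t=X_{t'}$, hence $t=t'$. Indeed, $|X_t|\ge|X_{t'}|$, and distinct sets of equal size are incomparable. The sizes $|C|$ range over $[0,m]$, which has exactly $\ell\cdot2^g$ values. Split them into $2^g$ consecutive blocks of $\ell$ values each, and define
\[B_C=X_{\lfloor |C|/\ell\rfloor}.\]

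\emph{Step 3: $\cS$ is $\ell$-Sperner.} Suppose $G_{C_1}\subset G_{C_2}\subset\cdots\subset G_{C_{r}}$ is a chain of distinct sets. Intersecting with $A$ gives $C_1\subseteq\cdots\subseteq C_r$. These $C_p$ are distinct, since $G_C$ determines $C$, so $|C_1|<\cdots<|C_r|$. Consequently the indices $t_p=\lfloor |C_p|/\ell\rfloor$ are weakly increasing. Intersecting with $[g]$ gives $X_{t_1}\subseteq\cdots\subseteq X_{t_r}$. By the property of the enumeration, all $t_p$ are then equal. So the $r$ distinct sizes $|C_p|$ lie in a single block of $\ell$ values, which forces $r\le\ell$. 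Hence no chain of length $\ell+1$ exists.

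The step needing the most care is the choice of the ordering in Step 2, namely that a reverse size order makes forward inclusions impossible. Once that is in place, the remainder is routine bookkeeping.
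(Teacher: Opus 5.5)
Your proposal is correct and is essentially the paper's proof: it uses the same family (each subset of $A$ whose size lies in the $t$-th block of $\ell$ consecutive values is paired with the $t$-th subset of $[g]$ in non-increasing size order) and the same chain argument. The only difference is organizational: you get weak monotonicity of the block indices from the strictly increasing sizes and then equality from the ordering, whereas the paper splits into an all-indices-equal case and a strict-decrease case; your explicit check via Proposition~\ref{def:osh-direct} just fills in what the paper calls clear from the definition.
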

\begin{proof}
Let $A=[g+1,g+\ell\cdot2^g-1]$. Order the $2^g$ subsets of $[g]$ as $G_1,G_2,\ldots,G_{2^g}$, such that $G_i\setminus G_j\not=\varnothing$ for any $1\leq i<j\leq2^g$. This can be achieved by any ordering satisfying $|G_1|\geq|G_2|\geq\cdots\geq|G_{2^g}|$. 
 
Consider the family $\cS$ of subsets of $[n]$ given by
\[\cS=\bigcup_{i=1}^{2^g}\left\{G_i\cup A':A'\in\bigcup_{j=(i-1)\ell}^{i\ell-1}\binom{A}j\right\}.\]
We claim that $\cS$ is an $\ell$-Sperner family and $A\in\osh(\cS)$. The latter is clear from definition after noting that 
\[\bigcup_{i=1}^{2^g}\bigcup_{j=(i-1)\ell}^{i\ell-1}\binom{A}j=\bigcup_{j=0}^{\ell\cdot2^g-1}\binom{A}j=\cP(A).\]

Suppose for a contradiction that there exist distinct $S_1,S_2,\ldots,S_{\ell+1}\in\cS$, such that $S_1\subset S_2\subset\cdots\subset S_{\ell+1}$. Using the definition of $\cS$, for each $k\in[\ell+1]$, write $S_k$ as the disjoint union of $G_{i_k}$ and $A_k$, where $i_k\in[2^g]$ and $A_k\subseteq A$ satisfies $(i_k-1)\ell\leq|A_k|\leq i_k\ell-1$. It follows that $A_1\subseteq A_2\subseteq\cdots\subseteq A_{\ell+1}$, and $G_{i_1}\subseteq G_{i_2}\subseteq\cdots\subseteq G_{i_{\ell+1}}$, so $i_1\geq i_2\geq\cdots\geq i_{\ell+1}$. If $i_1=i_2=\cdots=i_{\ell+1}$, then by pigeonhole, there exists distinct $k_1,k_2\in[\ell+1]$, such that $|A_{k_1}|=|A_{k_2}|$, so $A_{k_1}=A_{k_2}$. But then, $S_{k_1}=S_{k_2}$, a contradiction. Therefore, there exists $k^*\in[\ell]$, such that $i_{k^*}>i_{k^*+1}$, but then $|A_{k^*}|\geq(i_{k^*}-1)\ell>i_{k^*+1}\ell-1\geq|A_{k^*+1}|$, contradicting $A_{k^*}\subseteq A_{k^*+1}$. Therefore, $\cS$ is an $\ell$-Sperner family.
\end{proof}

We now prove a series of intermediate results, each of which says that given a set $A$ that can be order shattered by an $\ell$-Sperner family, there is a certain operation that can be performed on $A$ to obtain another set $A'$ that can also be order shattered by an $\ell$-Sperner family. 

The first such result is similar to Lemma~\ref{lemma:j=1yes}, and says that if we leave a gap of size $g$, then we can add a new block of $2^g-1$ consecutive integers to the end of $A$. 
\begin{lemma}\label{lemma:extend}
Let $1\leq a_1<a_2<\cdots<a_t$ be integers, and let $A=\{a_1,\ldots,a_t\}$. Suppose there exists an $\ell$-Sperner family $\cS$ with $A\in\osh(\cS)$. For every positive integer $g$, let $A'=A\cup[a_t+g+1,a_t+g+2^g-1]$. Then, there exists an $\ell$-Sperner family $\cS'$ with $A'\in\osh(\cS')$.
\end{lemma}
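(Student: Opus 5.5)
The plan is a direct construction in the spirit of Lemma~\ref{lemma:j=1yes}: first reduce $\cS$ to a family living on $[a_t]$, then glue on the gap and the new block. The gap coordinates carry an "anti-monotone" label, so that no chain can ever change its label.

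\emph{Step 1 (reduction to $[a_t]$).} Since $A\in\osh(\cS)$, Proposition~\ref{def:osh-direct} gives $\cG=\{G_1,\ldots,G_{2^t}\}\subseteq\cS$ in standard order. Condition~\ref{osh:2} with $i=t$ says that all $G_j$ agree on $[a_t+1,n]$, say $G_j=F_j\cup D$ with $F_j\subseteq[a_t]$ and a common $D$. Put $\cS_0=\{F_1,\ldots,F_{2^t}\}\subseteq\cP([a_t])$. Removing a common set $D$ preserves strict inclusions, so any chain in $\cS_0$ lifts to a chain in $\cG\subseteq\cS$. Hence $\cS_0$ is $\ell$-Sperner. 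Moreover, conditions~\ref{osh:1} and~\ref{osh:2} for $A$ only involve coordinates in $[a_t]$ once $D$ is fixed, so they still hold and $\cS_0$ order shatters $A$ in standard order.

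\emph{Step 2 (construction).} Let $X=[a_t+1,a_t+g]$ be the gap and $C=[a_t+g+1,a_t+g+2^g-1]$ the new block, so $|C|=2^g-1$. As in Lemma~\ref{lemma:j=1yes}, order the $2^g$ subsets of $X$ as $H_1,\ldots,H_{2^g}$ with $|H_1|\ge\cdots\ge|H_{2^g}|$, so that $H_i\setminus H_j\neq\emptyset$ for $i<j$. Define
\[\cS'=\left\{F\cup H_{|C'|+1}\cup C' : F\in\cS_0,\ C'\subseteq C\right\}.\]
This is well defined because $|C'|+1\in[2^g]$.

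\emph{Step 3 ($A'\in\osh(\cS')$).} Order the $2^{t+2^g-1}$ sets so that the coordinates in $C$ are the high-order binary digits, and within each fixed $C'$ the sets $F_j\cup H_{|C'|+1}\cup C'$ appear in the standard order of $\cS_0$. Condition~\ref{osh:1} is then immediate.

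For condition~\ref{osh:2} at an index $i\le t$, a group of $2^i$ consecutive sets has a fixed $C'$, hence a fixed $H_{|C'|+1}$. Agreement on $[a_i+1,a_t]$ comes from $\cS_0$, and the part above $a_t$ is constant.

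For an index corresponding to an element $c\in C$, a group has fixed $C'\cap[c+1,\,a_t+g+2^g-1]$, which is exactly the required agreement above $c$. The gap coordinates $X$ vary inside such a group, but $X$ lies below $c$, so condition~\ref{osh:2} places no constraint on it. This verification is the step I would check most carefully.

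\emph{Step 4 ($\cS'$ is $\ell$-Sperner).} Suppose $F_1\cup H_{i_1}\cup C_1\subset\cdots\subset F_{\ell+1}\cup H_{i_{\ell+1}}\cup C_{\ell+1}$ is a chain in $\cS'$. Restricting to each coordinate block gives
\[C_1\subseteq\cdots\subseteq C_{\ell+1},\qquad H_{i_1}\subseteq\cdots\subseteq H_{i_{\ell+1}},\qquad F_1\subseteq\cdots\subseteq F_{\ell+1}.\]
Nestedness of the $C_k$ forces $i_1\le\cdots\le i_{\ell+1}$, since $i_k=|C_k|+1$. On the other hand, $H_i\subseteq H_j$ with $i<j$ is impossible by the choice of ordering. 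So all the $i_k$ are equal. Then all $|C_k|$ are equal, and nestedness makes the $C_k$ equal; likewise all $H_{i_k}$ are equal. The strict inclusions must therefore come from the $F_k$, giving a chain of length $\ell+1$ in $\cS_0$, a contradiction.

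The main obstacle is only bookkeeping in Step 3, namely matching the standard order; the Sperner argument is straightforward.
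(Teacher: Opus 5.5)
Your proposal is correct and is essentially the paper's proof. It uses the same family $\cS'$ (the paper's $G_i$ glued to $B'\in\binom{B}{i}$ is your $H_{|C'|+1}$ glued to $C'$), with the same anti-monotone labelling of the gap to kill long chains. Your Step~1 makes explicit the reduction to $\cS\subseteq\cP([a_t])$ that the paper takes for granted, and your Step~4 forces all labels to coincide directly instead of first finding an index $k^*$ where the $\cS$-components agree, which is only a harmless rearrangement.
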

\begin{proof}
Let $\cS\subseteq\cP([a_t])$ be an $\ell$-Sperner family satisfying $A\in\osh(\cS)$. Order the $2^g$ subsets of $[a_t+1,a_t+g]$ as $G_0,G_1,\ldots,G_{2^g-1}$, such that $G_i\setminus G_j\not=\varnothing$ for any $0\leq i<j\leq 2^g-1$. Let $B=[a_t+g+1,a_t+g+2^g-1]$, and consider the family $\cS'$ of sets given by
\[\cS'=\bigcup_{i=0}^{2^g-1}\left\{S\cup G_i\cup B':S\in\cS, B'\in\binom{B}i\right\}.\]

From definition, it is clear that $A'=A\cup B\in\osh(\cS')$, so it remains to show that $\cS'$ is an $\ell$-Sperner family. Indeed, suppose for a contradiction that there exist distinct $S_1',S_2',\ldots,S_{\ell+1}'\in\cS'$, such that $S_1'\subset S_2'\subset\cdots\subset S_{\ell+1}'$. Using the definition of $\cS'$, for each $k\in[\ell]$, write $S_k'$ as the disjoint union of $S_k$, $G_{i_k}$, and $B_k'$, where $S_k\in\cS$, $0\leq i_k\leq 2^g-1$, and $B_k'\in\binom{B}{i_k}$. It follows that $S_1\subseteq S_2\subseteq\cdots\subseteq S_{\ell+1}$, so there exists $k^*\in[\ell+1]$ such that $S_{k^*}=S_{k^*+1}$, as $\cS$ is an $\ell$-Sperner family. Moreover, from $G_{i_{k^*}}\subseteq G_{i_{k^*}+1}$, it follows that $i_{k^*}\geq i_{k^*+1}$ and $|B_{k^*}'|=i_{k^*}\geq i_{k^*+1}=|B'_{k^*+1}|$. However, $B_{k^*}'\subseteq B_{k^*+1}'$, so $i_{k^*}=i_{k^*+1}$ and $B_{k^*}'=B_{k^*+1}'$. Therefore, $S_{k^*}'=S_{k^*+1}'$, a contradiction.
\end{proof}

The next two results together show that if the largest $r$ numbers in $A$ are consecutive, then by increasing each of them by $g$, we can further add on the next $(r+1)2^g-r-1$ numbers.
\begin{lemma}\label{lemma:shiftandextend}
Let $1\leq a_1<a_2<\cdots<a_t$ and $r,g\geq1$ be integers. Let $A=\{a_1,\ldots,a_t,a_t+g+1,\ldots,a_t+g+r\}$ and $A'=\{a_1,\ldots,a_{t-1},a_t+g,a_t+g+1,\ldots,a_t+g+2^g+\ceil{(r+1)/2^g}2^g-2\}$. If there exists an $\ell$-Sperner family $\cS$ with $A\in\osh(\cS)$, then there exists an $\ell$-Sperner family $\cS'$ with $A'\in\osh(\cS')$.
\end{lemma}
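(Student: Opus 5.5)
The plan is to construct $\cS'$ explicitly from $\cS$ and then transfer the $\ell$-Sperner property through a map $\varphi:\cS'\to\cS$. The map should have one property: whenever $X\subseteq Y$ in $\cS'$, either $X=Y$ or $\varphi(X)\subsetneq\varphi(Y)$. Then a strict chain of length $\ell+1$ in $\cS'$ would give a strict chain of length $\ell+1$ in $\cS$, which is impossible. This is the same mechanism as in Lemmas~\ref{lemma:j=1yes} and~\ref{lemma:extend}, where the level of $B'$ was used to encode the index $i$ of $G_i$.

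Set $m=\ceil{(r+1)/2^g}$ and $C=[a_t+g,\,a_t+g+(m+1)2^g-2]$, so that $|C|=(m+1)2^g-1$ and $A'=\{a_1,\ldots,a_{t-1}\}\cup C$. We may assume $\cS\subseteq\cP([a_t+g+r])$ and fix the standard-order subfamily $\cG\subseteq\cS$ order shattering $A$ (Proposition~\ref{def:osh-direct}). For each member of $\cG$, its trace on the tail $R=\{a_t\}\cup[a_t+1,a_t+g]\cup[a_t+g+1,a_t+g+r]$ consists of three pieces: a bit for $a_t$, a gap set $H\subseteq[a_t+1,a_t+g]$ (not constrained by shattering), and a subset of the top block of size $r$.

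Write the size of a subset $C'\subseteq C$ as $|C'|=q2^g+\rho$ with $0\le\rho<2^g$ and $0\le q\le m$. The pair $(q,\rho)$ is intended to simulate a tail pattern of $\cS$ in a monotone way. As in Lemma~\ref{lemma:extend}, order the subsets of the gap as $G_0,\dots,G_{2^g-1}$ by nonincreasing size, so that $G_i\setminus G_j\ne\emptyset$ for $i<j$. The residue $\rho$ is then matched against the gap index, and the quotient $q$ against the top-block level together with the $a_t$-bit. The inequality $r+1\le m2^g$ guarantees there are enough quotient levels to absorb the top block plus $a_t$, which is why $m$ is defined this way.

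Concretely, $\cS'$ consists of all sets $S_0\cup C'$, where $S_0$ is the trace on $[a_t-1]$ of a set $F\in\cS$ and $C'\subseteq C$ has its level $(q,\rho)$ compatible with the tail of $F$. The map is $\varphi(S_0\cup C')=F$, choosing $F$ in the same block of $\cG$ so that $\varphi$ is well defined. Order shattering of $A'$ by $\cS'$ should then follow as in Lemma~\ref{lemma:extend}. Every subset of $C$ occurs above every admissible trace on $[a_t-1]$, and $\{a_1,\ldots,a_{t-1}\}$ is order shattered by the traces of $\cG$ on $[a_t-1]$ because condition~\ref{osh:2} fixes everything above $a_{t-1}$ within blocks.

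The main obstacle is the monotonicity check for $\varphi$. Suppose $X\subsetneq Y$ with the same $[a_t-1]$ trace. Then $|X\cap C|<|Y\cap C|$, and it must follow that the simulated tail of $\varphi(X)$ is strictly contained in that of $\varphi(Y)$. If it did not, we would need a contradiction as in Lemma~\ref{lemma:extend}: there, $G_i\subseteq G_j$ forces $i\ge j$, while the containment $B_i'\subseteq B_j'$ forces the reverse size inequality. I would first try to make the correspondence lexicographic: $q$ controls the $a_t$-bit and top-block level, and $\rho$ indexes the gap set in reverse order. Then a jump in $q$ with a drop in $\rho$ is compensated exactly as in Lemma~\ref{lemma:j=1yes}. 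If this bookkeeping fails, the fallback is to prove the lemma by induction on $r$, peeling off one top element at a time using Lemma~\ref{lemma:extend}.
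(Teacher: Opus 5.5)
\textbf{There is a genuine gap.} The step that makes the lemma work, a pigeonhole selection, is missing, and the mechanism you describe in its place cannot work.

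\textbf{The $q$-part needs nested gap sets.} In $\cG$, all sets whose trace on the top block $[a_t+g+1,a_t+g+r]$ equals a given $D$ share a single gap set $H_D\subseteq[a_t+1,a_t+g]$. The map $D\mapsto H_D$ is dictated by $\cS$, not chosen by you. So if $\rho$ is to be ``matched against the gap index'' of $\cS$, it cannot be: a tail with a prescribed top trace and a prescribed gap set usually does not occur in $\cS$. The $q$-part fails for the same reason. If $D\subset D'$ but $H_D\not\subseteq H_{D'}$, then lifting $X\subseteq X'$ through $D$ and $D'$ gives incomparable members of $\cS$, so $\varphi$ is not strictly monotone. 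The paper resolves this by pigeonhole. A maximal chain in $\cP([a_t+g+1,a_t+g+r])$ has $r+1$ members and there are only $2^g$ possible gap sets. Hence there are $D_1\subset\cdots\subset D_m$, with $m=\ceil{(r+1)/2^g}$, such that $H_{D_1}=\cdots=H_{D_m}$. Take the traces on $[a_t-1]$ of the $a_t$-free halves of the blocks of $D_1,\ldots,D_m$, together with the $a_t$-containing half of the block of $D_m$. This gives $m+1$ families, each order shattering $\{a_1,\ldots,a_{t-1}\}$. Moreover, $X\subseteq X'$ with $X$ in an earlier family than $X'$ always lifts to a strict containment in $\cS$. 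That is the actual meaning of $r+1\le m2^g$. It is not about having ``enough quotient levels'': $q$ has only $m+1<r+2$ values.

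\textbf{The factor $2^g$ needs fresh gap sets.} These must be sets $G_\rho$ placed on $[a_t,a_t+g-1]$. Those positions are free because $a_t\notin A'$ and the new block starts at $a_t+g$. Your concrete family of sets $S_0\cup C'$ with $S_0\subseteq[a_t-1]$ leaves them empty, and then no construction can succeed in general. Take $t=1$, $a_1=2$, $g=r=\ell=1$. The antichain $\{\{1,3\},\{2,3\},\{1,4\},\{2,4\}\}$ order shatters $A=\{2,4\}$, and here $A'=\{3,4,5\}$. A Sperner family on $\{1,3,4,5\}$ order shattering $\{3,4\}$ would, after deleting the unused element $2$ and relabelling, order shatter $[2,3]$. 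This contradicts Lemma~\ref{lemma:j=1no}.

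\textbf{With both repairs your plan works.} Your lexicographic order is a harmless variant of the paper's encoding $(j-1)(m+1)+(i-1)$, which puts the gap index major. Take all $X\cup G_\rho\cup C'$ with $X$ in the $(q+1)$-st family and $|C'|=q2^g+\rho$. Along a chain, $\rho$ is nonincreasing and $|C'|$ is nondecreasing. So $q$ is nondecreasing, and $\rho$ (hence $C'$) is constant whenever $q$ is.

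\textbf{The fallback is not viable.} Applied to $\{a_1,\ldots,a_{t-1}\}$, Lemma~\ref{lemma:extend} yields a block of length only $2^{a_t+g-1-a_{t-1}}-1$. This is shorter than $(m+1)2^g-1$ when, for example, $a_t=a_{t-1}+1$. Every other application leaves a gap inside the final block.
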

\begin{proof}
Let $A_1=\{a_1,\ldots,a_{t-1}\}$ and $A_2=[a_t+g+1,a_t+g+r]$. Let $\cS\subseteq\cP([a_t+g+r])$ be an $\ell$-Sperner family satisfying $|\cS|=2^{t+r}$ and $A\in\osh(\cS)$. From definition, for every $C\subseteq A_2$, there exist two families $\cS_{C,1},\cS_{C,2}\subseteq\cP([a_t-1])$ with $|\cS_{C,1}|=|\cS_{C,2}|=2^{t-1}$ and $A_1\in\osh(\cS_{C,1})\cap\osh(\cS_{C,2})$, as well as a set $B_C\subset[a_t+1,a_t+g]$, such that $\cS$ is the disjoint union of \[\cS(C)=\{X\cup B_C\cup C:X\in\cS_{C,1}\}\cup\{X\cup\{a_t\}\cup B_C\cup C:X\in\cS_{C,2}\},\]
taken over all $C\subseteq A_2$. Note that both $\cS_{C,1}$ and $\cS_{C,2}$ are $\ell$-Sperner families, as any chain of length $\ell+1$ within one of them would also induce a chain of length $\ell+1$ in $\cS(C)\subseteq\cS$.

Let $p=\ceil{(r+1)/2^g}$. By pigeonhole, we can find distinct $C_1\subset C_2\subset\cdots\subset C_p\subseteq A_2$, such that $B_{C_1}=\cdots=B_{C_p}$. Let $\cS_i=\cS_{C_i,1}$ for every $i\in[p]$, and let $\cS_{p+1}=\cS_{C_p,2}$. Let $G_1,G_2,\ldots,G_{2^g}$ be an ordering of the $2^g$ subsets of $[a_t,a_t+g-1]$, such that for any $1\leq i<j\leq 2^g$, $G_i\setminus G_j\not=\varnothing$. Let $A_3=[a_t+g,a_t+g+(p+1)2^g-2]$. Let $\cS'$ be the family given by
\[\cS'=\bigcup_{j=1}^{2^g}\bigcup_{i=1}^{p+1}\left\{X\cup G_j\cup Y:X\in\cS_i, Y\in\binom{A_3}{(j-1)(p+1)+i-1}\right\}.\]
Note that \[\bigcup_{j=1}^{2^g}\bigcup_{i=1}^{p+1}\binom{A_3}{(j-1)(p+1)+i-1}=\cP(A_3),\]
from which it follows that $A'=A_1\cup A_3\in\osh(\cS')$. 

Furthermore, we claim that $\cS'$ is an $\ell$-Sperner family. Indeed, suppose for a contradiction that there exist distinct $S_1'\subset S_2'\subset\cdots\subset S_{\ell+1}'$ in $\cS'$, where for each $k\in[\ell+1]$, $S_k'=X_{k}\cup G_{j_k}\cup Y_k$ for some $i_k\in[p+1]$, $X_{k}\in\cS_{i_k}$, $j_k\in[2^g]$, and $Y_k\subseteq A_3$ of size $(j_k-1)(p+1)+i_k-1$. Then, $G_{j_1}\subseteq\cdots\subseteq G_{j_{\ell+1}}$ implies that $j_1\geq\cdots\geq j_{\ell+1}$. Combined with $Y_1\subseteq\cdots\subseteq Y_{\ell+1}$, we get $j_1=\cdots=j_{\ell+1}$ and $i_1\leq\cdots\leq i_{\ell+1}$. 

We also know that $X_1\subseteq\cdots\subseteq X_{\ell+1}$. For every $k\in[\ell+1]$, if $i_k\in[p]$, let $S_k=X_k\cup B_{C_{i_k}}\cup C_{i_k}$, while if $i_k=p+1$, let $S_k=X_k\cup\{a_t\}\cup B_{C_p}\cup C_p$. We claim that $S_1\subset\cdots\subset S_{\ell+1}$, which leads to a contradiction as they form a chain of length $\ell+1$ in $\cS$. For every $k\in[\ell]$, if $i_k=i_{k+1}$, then $Y_k=Y_{k+1}$, so $X_k\subset X_{k+1}$ as $S_k'\subset S_{k+1}'$, which then implies $S_k\subset S_{k+1}$. If $i_k<i_{k+1}\leq p$, then $S_k\subset S_{k+1}$ as $C_{i_k}\subset C_{i_{k+1}}$. Finally, if $i_k<i_{k+1}=p+1$, then $S_k\subset S_{k+1}$ as $a_t\in S_{k+1}\setminus S_k$, finishing the proof. 
\end{proof}

\begin{corollary}\label{cor:extend}
Let $1\leq a_1<a_2<\cdots<a_t$ and $r\geq1$ be integers, and let $A=\{a_1,\ldots,a_t,a_t+1,\ldots,a_t+r\}$. Suppose there exists an $\ell$-Sperner family $\cS$ with $A\in\osh(\cS)$. For every positive integer $g$, let $A'=\{a_1,\ldots,a_t,a_t+g+1,a_t+g+2,\ldots,a_t+g+(r+1)2^g-1\}$. Then, there exists an $\ell$-Sperner family $\cS'$ with $A'\in\osh(\cS')$.
\end{corollary}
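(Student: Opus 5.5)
The plan is to imitate the proof of Lemma~\ref{lemma:shiftandextend} directly. The difference here is that there is no gap between $a_t$ and $A_2=[a_t+1,a_t+r]$, so we cannot simply cite that lemma. We do not need the pigeonhole step on the sets $B_C$ either, and we can use a full chain in $\cP(A_2)$.

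\emph{Setup.} We may assume $\cS\subseteq\cP([a_t+r])$ and $|\cS|=2^{t+r}$. Put $A_1=\{a_1,\ldots,a_t\}$. By Proposition~\ref{def:osh-direct}, applied to the top $r$ elements of $A$, $\cS$ is the disjoint union over $C\subseteq A_2$ of the families $\{X\cup C: X\in\cS_C\}$. Here each $\cS_C\subseteq\cP([a_t])$ has size $2^t$ and satisfies $A_1\in\osh(\cS_C)$. Each $\cS_C$ is $\ell$-Sperner, because a chain in $\cS_C$ lifts to a chain in $\cS$ by adding $C$. Fix a maximal chain $\varnothing=C_0\subset C_1\subset\cdots\subset C_r=A_2$, and write $\cS_i=\cS_{C_i}$ for $0\le i\le r$. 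This gives $r+1$ families whose lifts are strictly nested in the $A_2$-coordinate.

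\emph{Construction.} Order the subsets of $[a_t+1,a_t+g]$ as $G_1,\ldots,G_{2^g}$ so that $G_i\setminus G_j\neq\varnothing$ for $i<j$, for instance by non-increasing size. Let $A_3=[a_t+g+1,a_t+g+(r+1)2^g-1]$, so $|A_3|=(r+1)2^g-1$. Define
\[\cS'=\bigcup_{j=1}^{2^g}\bigcup_{i=0}^{r}\left\{X\cup G_j\cup Y: X\in\cS_i,\ Y\in\binom{A_3}{(j-1)(r+1)+i}\right\}.\]
The sizes $(j-1)(r+1)+i$ run exactly once over $0,\ldots,(r+1)2^g-1=|A_3|$. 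Hence every $Y\subseteq A_3$ occurs with exactly one pair $(i,j)$, and therefore with a single fixed middle part $G_j$. Given $Y$, the sets of $\cS'$ with that $A_3$-part form a copy of $\cS_i$ with a common part above $a_t$. Since $A_1\in\osh(\cS_i)$, Proposition~\ref{def:osh-direct} gives $A'=A_1\cup A_3\in\osh(\cS')$.

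\emph{$\ell$-Sperner property.} This is the main step, and it follows the argument of Lemma~\ref{lemma:shiftandextend}. Suppose $S'_1\subset\cdots\subset S'_{\ell+1}$ is a chain in $\cS'$ with $S'_k=X_k\cup G_{j_k}\cup Y_k$ and $X_k\in\cS_{i_k}$.
\begin{enumerate}
\item From $G_{j_1}\subseteq\cdots\subseteq G_{j_{\ell+1}}$ we get $j_1\ge\cdots\ge j_{\ell+1}$.
\item Since $|Y_k|$ is non-decreasing and the $j$-blocks of sizes are ordered, this forces all $j_k$ to be equal and $i_1\le\cdots\le i_{\ell+1}$.
\item Set $S_k=X_k\cup C_{i_k}\in\cS$. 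If $i_k=i_{k+1}$, then $Y_k=Y_{k+1}$, so $X_k\subset X_{k+1}$. If $i_k<i_{k+1}$, then $C_{i_k}\subset C_{i_{k+1}}$ while $X_k\subseteq X_{k+1}$.
\item In both cases $S_k\subset S_{k+1}$, so $S_1\subset\cdots\subset S_{\ell+1}$ is a chain of length $\ell+1$ in $\cS$, a contradiction.
\end{enumerate}

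The only delicate point is the bookkeeping in step 2, namely that the size ranges for distinct pairs $(j,i)$ are disjoint and ordered lexicographically. This is the same check as in Lemma~\ref{lemma:shiftandextend}, with $p+1$ replaced by $r+1$.
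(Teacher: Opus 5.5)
Your proof is correct, but it follows a different route from the paper's.

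\textbf{The paper's route.} The paper proves the corollary by induction on $j\in[0,r]$.
\begin{itemize}
\item It first applies Lemma~\ref{lemma:extend} to append the block $[a_t+r+g+1,a_t+r+g+2^g-1]$ after a gap of $g$.
\item It then applies Lemma~\ref{lemma:shiftandextend} $r$ times, with $p=j+1$ at step $j$.
\item Each application moves the current last element $a_t+r-j$ of the original run across the gap. This enlarges the new block from $(j+1)2^g-1$ to $(j+2)2^g-1$ elements.
\item Each application internally uses the pigeonhole on the gap traces $B_C$, plus the extra family $\cS_{C_p,2}$ coming from the element that gets shifted.
\end{itemize}

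\textbf{Your route.} You do a single construction instead. Because $[a_t+1,a_t+r]$ follows $a_t$ with no gap, there are no traces $B_C$ to control, and $a_t$ is never moved. So any maximal chain $C_0\subset\cdots\subset C_r$ in $\cP(A_2)$ directly supplies the $r+1$ nested families needed to fill the $(r+1)2^g$ size classes of $A_3$.

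Your bookkeeping is right. If $j_k>j_{k+1}$, then $|Y_k|\ge(j_k-1)(r+1)>(j_{k+1}-1)(r+1)+r\ge|Y_{k+1}|$, which contradicts $Y_k\subseteq Y_{k+1}$. The lifted sets $X_k\cup C_{i_k}$ then form a strict chain in $\cS$ in both subcases.

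Your normalization $\cS\subseteq\cP([a_t+r])$, $|\cS|=2^{t+r}$ is the same one the paper uses in its lemmas. It is justified by passing to a shattering subfamily and deleting the common part above $a_t+r$, which preserves strict inclusions.

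\textbf{Comparison.} The paper's version costs two lines, because the lemmas are needed for Theorem~\ref{thm:lspernercriterion} anyway. Yours is self-contained and avoids both the induction and the pigeonhole. It produces an explicit $\cS'$ in one step from $\cS$. It also shows that the corollary is really the zero-gap analogue of the construction in Lemma~\ref{lemma:shiftandextend}, rather than something that requires iterating it. The price is that you must redo the $\ell$-Sperner verification yourself, and you do so correctly.
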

\begin{proof}
By induction on $j$, we show that for every $0\leq j\leq r$, there exists an $\ell$-Sperner family $\cS'_j$ that order shatters $A_j=\{a_1,\ldots,a_t,a_t+1,\ldots,a_t+r-j,a_t+r+g-j+1,\ldots,a_t+r+g-j+(j+1)2^g-1\}$. The base case when $j=0$ follows from Lemma~\ref{lemma:extend}, while the case when $j=r$ is what we need. Suppose this is true for some $0\leq j\leq r-1$, then the $\ell$-Sperner family $\cS'_{j+1}$ that order shatters $A_{j+1}$ is given by Lemma~\ref{lemma:shiftandextend}. 
\end{proof}

The next two results can be viewed as the reverse of the previous two. They together show that if there is a gap of at least $g$ between the last two blocks of consecutive integers in $A$, then we can reduce the gap size by $g$ at the cost of reducing the size of the last block roughly by a factor of $2^g$.
\begin{lemma}\label{lemma:shiftback}
Let $1\leq a_1<a_2<\cdots<a_t$ and $r\geq2$ be integers. Let $A=\{a_1,\ldots,a_{t-1},a_t+1,a_t+2,\ldots,a_t+r\}$ and $A'=\{a_1,\ldots,a_{t-1},a_t,a_t+2,a_t+3,\ldots,a_t+2\ceil{(r+1)/2}-2\}$. Suppose there exists an $\ell$-Sperner family $\cS$ with $A\in\osh(\cS)$. Then, there exists an $\ell$-Sperner family $\cS'$ with $A'\in\osh(\cS')$.
\end{lemma}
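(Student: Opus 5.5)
The plan is to reverse the construction in Lemma~\ref{lemma:shiftandextend} with $g=1$. Write $A_1=\{a_1,\ldots,a_{t-1}\}$, $A_2=[a_t+2,a_t+r]$, $m=\ceil{(r+1)/2}$ and $A_3=[a_t+2,a_t+2m-2]$, so that $A'=A_1\cup\{a_t\}\cup A_3$ and $|A_3|=2m-3$. We may take $\cS\subseteq\cP([a_t+r])$ with $|\cS|=2^{|A|}$ in standard order.

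\textbf{Step 1: decompose $\cS$.} Using Proposition~\ref{def:osh-direct}, for every $C\subseteq A_2$ and $\varepsilon\in\{0,1\}$ there is a subfamily $\cS_{C,\varepsilon}\subseteq\cP([a_t])$ of size $2^{t-1}$ that order shatters $A_1$. Its sets, augmented by $C$ and by $a_t+1$ when $\varepsilon=1$, lie in $\cS$. As in the proof of Lemma~\ref{lemma:shiftandextend}, each $\cS_{C,\varepsilon}$ is $\ell$-Sperner. Moreover, condition~\ref{osh:2} with $i=t-1$ says that all members of $\cS_{C,\varepsilon}$ agree on $[a_{t-1}+1,a_t]$. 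In particular, whether $a_t$ belongs to them is a constant bit $\beta(C,\varepsilon)$.

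\textbf{Step 2: choose a chain of families.} Fix a maximal chain $C_1\subset\cdots\subset C_{r-1}$ in $\cP(A_2)$ with $|C_k|=k$, and set $C_0=\emptyset$. Consider the $r+1$ families
\[
\cS_{C_0,0},\ \cS_{C_0,1},\ \cS_{C_1,1},\ \ldots,\ \cS_{C_{r-1},1}.
\]
Their "tails" above $a_t$ strictly increase along this list. Hence if $X_1\subseteq X_2\subseteq\cdots$ is taken from families with weakly increasing index, and the inclusion is strict whenever the index stays the same, the augmented sets form a strict chain in $\cS$. By pigeonhole, $m=\ceil{(r+1)/2}$ of these families have the same bit $\beta$; call them $\cF_1,\ldots,\cF_m$ in list order.

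\textbf{Step 3: build $\cS'$.} Apply the transposition $\sigma$ of $a_t$ and $a_t+1$ to every set in the $\cF_k$. Then each $\sigma(\cF_k)$ still order shatters $A_1$, lives in $[a_t-1]\cup\{a_t+1\}$, and has constant $a_t+1$-bit $\beta$. Define
\[
\cS'=\bigcup_{\varepsilon\in\{0,1\}}\ \bigcup_{Y\subseteq A_3}\bigl\{\sigma(X)\cup\varepsilon\{a_t\}\cup Y : X\in\cF_{\kappa(\varepsilon,|Y|)}\bigr\},
\]
where $\kappa$ is an index function, non-decreasing in $\varepsilon+|Y|$, taking values in $[m]$. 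The common $\beta$ guarantees that the two halves for a fixed $Y$ agree above $a_t$. The $\cF_k$ order shatter $A_1$. Together these give $A'\in\osh(\cS')$ via Proposition~\ref{def:osh-direct}.

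\textbf{Step 4: $\ell$-Sperner property (expected main obstacle).} Suppose $S'_1\subset\cdots\subset S'_{\ell+1}$ in $\cS'$. The parameters $(\varepsilon,Y)$ increase along the chain, so the family indices are non-decreasing. I want to push this down to a strict chain in $\cS$ of length $\ell+1$ via Step 2.

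The difficulty is a counting mismatch. The sums $\varepsilon+|Y|$ take the $2m-1$ values $0,\ldots,2m-2$, but only $m$ families with a common $\beta$ are available. So $\kappa$ cannot simply be $\varepsilon+|Y|$; the natural candidate is $\kappa=\lfloor(\varepsilon+|Y|)/2\rfloor+1$. With that choice, two consecutive steps of a chain can share a family and even the same $X$, and then the chain does not lift to a strict chain in $\cS$.

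To fix this, I would first try exploiting the constant $a_t$-bit of the original families more carefully. Instead of pigeonholing on $\beta$, I would pair the families $\cS_{C_k,\cdot}$ two at a time and use the freedom of which copy of $X$ to place under $\varepsilon=0$ versus $\varepsilon=1$. The aim is that an $\varepsilon$-step always coincides with a strict advance in the list of Step 2. This is exactly where the quantity $2\ceil{(r+1)/2}-2$ in the statement should come from, and I expect the bookkeeping here to be the heart of the proof.
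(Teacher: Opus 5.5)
\textbf{There is a genuine gap: the construction is never completed, and the framework of Step 3 provably cannot be completed.}

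Your Steps 1 and 2 match the paper. You decompose $\cS$ by the trace on $[a_t+1,a_t+r]$, pigeonhole along a maximal chain to get $m=\ceil{(r+1)/2}$ families with a common $a_t$-bit, and lift chains that move weakly forward through that list. Step 4, however, is left open.

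Step 3 as written cannot be fixed by any choice of $\kappa$ or re-pairing of the $\cF_k$. Because $\sigma$ moves the common bit $\beta$ onto $a_t+1$, every set of your $\cS'$ has the same $(a_t+1)$-coordinate. Deleting a coordinate on which all sets agree preserves both the $\ell$-Sperner property and order shattering, and it turns $A'$ into $A_1\cup[a_t,a_t+2m-3]$. Here is a concrete failure:
\begin{itemize}
\item Take $\ell=1$, $t=1$, $a_1=3$, $r=7$, so $A=[4,10]$ and $m=4$.
\item The hypothesis holds by Lemma~\ref{lemma:j=1yes} with $g=3$.
\item Your $\cS'$ would give an antichain order shattering $[3,8]\supseteq[3,6]$.
\item This contradicts Lemma~\ref{lemma:j=1no} with $g=2$.
\end{itemize}
So the counting mismatch you found in Step 4 is a real obstruction, not bookkeeping. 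Your suggested re-pairing adds no new free coordinate, so it runs into the same wall.

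\textbf{The missing idea.} Discard the common bit altogether: regard the $\cF_k$ as families in $\cP([a_t-1])$. Then use the freed coordinate $a_t+1$, which is not in $A'$, as a separator, and reuse the same $m$ families twice.
\begin{itemize}
\item Put $a_t+1$ into every set with $|Y|\le m-2$, and omit it from every set with $|Y|\ge m-1$.
\item A set of the first kind is never contained in one of the second kind, since it contains $a_t+1$. It never contains one either, since its $Y$ is smaller. So any chain stays within one half.
\item In the first half take $\kappa=\varepsilon+|Y|+1$; in the second take $\kappa=\varepsilon+|Y|-m+2$. Each ranges exactly over $1,\ldots,m$.
\item Every chain step that changes $(\varepsilon,Y)$ now strictly increases $\kappa$. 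A step with $(\varepsilon,Y)$ fixed is a strict inclusion inside a single $\cF_k$.
\item Hence your Step 2 lifting yields a chain of length $\ell+1$ in $\cS$, a contradiction.
\end{itemize}
Each half accounts for $m-1$ values of $|Y|$, which is why $|A_3|=2m-3$. This is exactly the paper's construction with the four families $\cS_1,\ldots,\cS_4$.
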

\begin{proof}
Let $A_1=\{a_1,\ldots,a_{t-1}\}$ and $A_2=[a_t+1,a_t+r]$. Let $\cS\subseteq\cP([a_t+r])$ be an $\ell$-Sperner family satisfying $|\cS|=2^{t+r-1}$ and $A\in\osh(\cS)$. From definition, for every $C\subseteq A_2$, there exists a family $\cS_C\subseteq\cP([a_t-1])$ with $|\cS_C|=2^{t-1}$ and $A_1\in\osh(\cS_C)$, as well as a set $B_C\subseteq\{a_t\}$, such that $\cS$ is the disjoint union of \[\cS(C)=\{X\cup B_C\cup C:X\in\cS_C\},\]
taken over all $C\subseteq A_2$. Note that each $\cS_C$ is an $\ell$-Sperner family, as any chain of length $\ell+1$ in $\cS_C$ would also induce a chain of length $\ell+1$ in $\cS(C)\subseteq\cS$.

Let $p=\ceil{(r+1)/2}$. By pigeonhole, we can find $C_1\subset C_2\subset\cdots\subset C_p\subseteq A_2$, such that $B_{C_1}=B_{C_2}=\cdots=B_{C_p}$. Let $A_3=[a_t+2,a_t+2p-2]$. Consider the family $\cS'$ given by taking the union of the following four families. 
\[\cS_1=\bigcup_{i=1}^{p-1}\left\{X\cup\{a_t+1\}\cup Y:X\in\cS_{C_i}, Y\in\binom{A_3}{i-1}\right\},\]
\[\cS_2=\bigcup_{i=1}^{p-1}\left\{X\cup\{a_t,a_t+1\}\cup Y:X\in\cS_{C_{i+1}}, Y\in\binom{A_3}{i-1}\right\},\]
\[\cS_3=\bigcup_{i=1}^{p-1}\left\{X\cup Y:X\in\cS_{C_i}, Y\in\binom{A_3}{p+i-2}\right\},\]
\[\cS_4=\bigcup_{i=1}^{p-1}\left\{X\cup\{a_t\}\cup Y:X\in\cS_{C_{i+1}}, Y\in\binom{A_3}{p+i-2}\right\}.\]

One can verify that $A'=A_1\cup\{a_t\}\cup A_3\in\osh(\cS')$. We now prove that $\cS'$ is an $\ell$-Sperner family. Suppose for a contradiction that there exist distinct $Z_1\subset Z_2\subset\cdots\subset Z_{\ell+1}$ in $\cS'$. Observe that every set in $\cS_1\cup\cS_2$ contains $a_t+1$, while no set in $\cS_3\cup\cS_4$ contains it. Also, every set in $\cS_3\cup\cS_4$ has strictly larger intersection with $A_3$ than every set in $\cS_1\cup\cS_2$. Therefore, every set in $\cS_1\cup\cS_2$ is incomparable with every set in $\cS_3\cup\cS_4$. Assume from now on that $Z_j\in\cS_1\cup\cS_2$ for every $j\in[\ell+1]$. The other case when $Z_j\in\cS_3\cup\cS_4$ for every $j\in[\ell+1]$ is analogous. 

Since every set in $\cS_2$ contains $a_t$, while no set in $\cS_1$ contains it, there exists some $0\leq m\leq\ell+1$, such that $Z_1,\ldots ,Z_m\in\cS_1$ and $Z_{m+1},\ldots ,Z_{\ell+1}\in\cS_2$. 
For every $j\in[m]$, say $Z_j=X_j\cup\{a_t+1\}\cup Y_j$ for some $i_j\in[p-1]$, $X_j\in S_{C_{i_j}}$ and $Y_j\in \binom{A_3}{i_j-1}$. For every $m+1\leq j\leq\ell+1$, say $Z_j=X_j\cup\{a_t,a_t+1\}\cup Y_j$ for some $i_j\in[p-1]$, $X_j\in S_{C_{i_j+1}}$ and $Y_j\in \binom{A_3}{i_j-1}$. Since $Z_1\subset Z_2\subset\cdots\subset Z_{\ell+1}$, we have $X_1\subseteq X_2\subseteq\cdots\subseteq X_{\ell+1}$, and $i_1\le i_2\le \cdots \le i_{\ell+1}$. Now, consider the sets $S_j=X_j\cup B_{C_{i_j}}\cup C_{i_j}$ for every $j\in[m]$, and $S_j=X_j\cup B_{C_{i_j+1}}\cup C_{i_j+1}$ for every $m+1\leq j\leq\ell+1$. Observe that $X_j=X_{j+1}$ and $i_j=i_{j+1}$ cannot happen at the same time for any $j\in[m-1]$ or any $m+1\le j \le \ell$, as it would imply $|Z_j|=|Z_{j+1}|$, a contradiction. Thus, $S_1\subset S_2\subset\cdots \subset S_m$ and $S_{m+1}\subset\cdots\subset S_{\ell+1}$. Furthermore, we always have $S_m\subset S_{m+1}$ as $C_{i_m}\subset C_{i_{m+1}+1}$. Therefore, $S_1\subset S_2\subset\cdots \subset S_{\ell+1}$ is a chain of length $\ell+1$ in $\cS$, contradicting that $\cS$ is an $\ell$-Sperner family. 
\end{proof}

\begin{corollary}\label{cor:shiftback}
Let $1\leq a_1<a_2<\cdots<a_t$ be integers, and let $r,g$ be positive integers satisfying $r\geq2^g$. Let $A=\{a_1,\ldots,a_{t-1},a_t+g,a_t+g+1\ldots,a_t+g+r-1\}$, and let $A'=\{a_1,\ldots,a_{t-1},a_t,a_t+1,\ldots,a_t+\ceil{(r+1)/2^g}-2\}$. Suppose there exists an $\ell$-Sperner family $\cS$ with $A\in\osh(\cS)$. Then, there exists an $\ell$-Sperner family $\cS'$ with $A'\in\osh(\cS')$.
\end{corollary}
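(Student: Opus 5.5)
The plan is to do the case $g=1$ first, using Lemma~\ref{lemma:shiftback} repeatedly together with the fact that $\osh(\cS)$ is a downset. The general case then follows by induction on $g$.

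\textbf{Step 1: the case $g=1$.} Here $A=A_1\cup[a_t+1,a_t+r]$ with $A_1=\{a_1,\ldots,a_{t-1}\}$ and $r\ge 2$. Let $p=\ceil{(r+1)/2}$. Lemma~\ref{lemma:shiftback} gives an $\ell$-Sperner family order shattering
\[A_1\cup\{a_t\}\cup[a_t+2,a_t+2p-2].\]
This set is not yet consecutive: it consists of a run $\{a_t\}$ of length $1$, a hole at $a_t+1$, and a tail of odd length $L=2p-3$.

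If $L\ge 3$, I apply Lemma~\ref{lemma:shiftback} again, this time with the role of ``$a_t$'' played by $a_t+1$. The prefix is now $A_1\cup\{a_t\}$ and the block is the tail, of length $L\ge2$. Since $L$ is odd, $2\ceil{(L+1)/2}-2=L-1$. The lemma therefore produces $A_1\cup\{a_t,a_t+1\}\cup[a_t+3,a_t+1+L-1]$, whose tail has length $L-2$.

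In general, each application lengthens the consecutive run by one and shortens the odd tail by two, keeping the hole just after the run. After $p-2$ applications the tail has length $1$ and the run is $[a_t,a_t+p-2]$. I then discard the single remaining tail element. This is allowed because $\osh(\cS')$ is a downset, and $\cS'$ is unchanged, so it stays $\ell$-Sperner. The result is $A_1\cup[a_t,a_t+\ceil{(r+1)/2}-2]$, which is exactly the claim for $g=1$. If $p=2$, no iteration is needed and only the discarding step is used.

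\textbf{Step 2: induction on $g$.} Assume the statement holds for $g-1$, and let $A=A_1\cup[a_t+g,a_t+g+r-1]$ with $r\ge 2^g$. Write the block as $[(a_t+g-1)+1,(a_t+g-1)+r]$ and apply the case $g=1$ with $a_t+g-1$ in place of $a_t$. This gives an $\ell$-Sperner family order shattering
\[A_1\cup[a_t+(g-1),\,a_t+(g-1)+r'-1],\qquad r'=\ceil{(r+1)/2}-1.\]
Two checks are needed before invoking the hypothesis for $g-1$.

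First, the size condition holds: $r\ge 2^g$ gives $\ceil{(r+1)/2}\ge 2^{g-1}+1$, so $r'\ge 2^{g-1}$.

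Second, the output matches: the hypothesis yields the block $[a_t,a_t+\ceil{(r'+1)/2^{g-1}}-2]$, and
\[\ceil{(r'+1)/2^{g-1}}=\ceil{\ceil{(r+1)/2}/2^{g-1}}=\ceil{(r+1)/2^g}\]
by the nested-ceiling identity. This is the claimed $A'$.

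\textbf{Main obstacle.} The main work is in Step 1. Lemma~\ref{lemma:shiftback} does not directly produce a consecutive block, so the iteration has to be set up correctly. One must check three things:
\begin{itemize}
\item after each application the set again has the shape ``prefix, one missing element, block of length at least $2$'' required by Lemma~\ref{lemma:shiftback};
\item the odd parity of the tail is preserved, so it shrinks by exactly two each time;
\item the bookkeeping leaves exactly $\ceil{(r+1)/2}-1$ consecutive elements.
\end{itemize}
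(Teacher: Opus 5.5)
Your proposal is correct and is essentially the paper's argument. The paper inducts on the number $j\in[0,g]$ of unit gap reductions already performed, and each inductive step is exactly your Step~1: apply Lemma~\ref{lemma:shiftback} $\ceil{(r+1)/2^{j+1}}-1$ times, each time moving the first element of the final block down by one and dropping its last element, then discard the single leftover element using the downset property, with the same nested-ceiling identity $\ceil{\ceil{(r+1)/2^j}/2}=\ceil{(r+1)/2^{j+1}}$; your induction on $g$ (peel off one unit via the $g=1$ case, check $r'\geq 2^{g-1}$, then invoke the hypothesis) only repackages that iteration.
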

\begin{proof}
For every $0\leq j\leq g$, we use induction on $j$ to show that there exists an $\ell$-Sperner family $\cS_j$ that order shatters $A_j=\{a_1,\ldots,a_{t-1},a_t+g-j,a_t+g-j+1,\ldots,a_t+g-j+\ceil{(r+1)/2^j}-2\}$. The case when $j=0$ follows from assumption, and the case when $j=g$ is what we need. 

Suppose $0\leq j\leq g-1$ and $A_j\in\osh(\cS_j)$ for some $\ell$-Sperner family $\cS_j$. By iteratively applying Lemma~\ref{lemma:shiftback}, we can always reduce the first element in the ending block of consecutive integers by 1, and additionally remove the last element if this block has odd length at least 3. Then, after $\ceil{\ceil{(r+1)/2^j}/2}-1=\ceil{(r+1)/2^{j+1}}-1$ steps, we can find an $\ell$-Sperner family $\cS_{j+1}$ such that $A_{j+1}\cup\{a_t+g-j-1+\ceil{(r+1)/2^{j+1}}\}\in\osh(\cS_{j+1})$, and so $A_{j+1}\in\osh(\cS_{j+1})$ as well.
\end{proof}

Finally, we can put everything together and prove Theorem~\ref{thm:lspernercriterion}. The idea is to view $A$ as blocks of consecutive integers, and use induction on the number of blocks. The preceding series of lemmas will be used in the induction step to create, delete, or modify the last block in $A$ as required.
\begin{proof}[Proof of Theorem~\ref{thm:lspernercriterion}]
Let $A$ be a set of positive integers. By breaking $A$ down into maximal blocks of consecutive integers, we can uniquely represent the elements in $A$ in increasing order as $g_1+1,\ldots,g_1+b_1,g_1+b_1+g_2+1,\ldots,g_1+b_1+g_2+b_2,\ldots,g_1+b_1+\cdots+g_j+1,\ldots,g_1+b_1+\cdots+g_j+b_j$, for some $j\geq 1$, $g_1,\ldots,g_j\geq0$, and $b_1,\ldots,b_j\geq1$. We say that $A$ is encoded by the sequence $g_1,b_1,\ldots,g_j,b_j$. 

To prove Theorem~\ref{thm:lspernercriterion}, we use induction on $j$. The base case when $j=1$ follows from  Lemma~\ref{lemma:j=1no} and Lemma~\ref{lemma:j=1yes}. Now let $j>1$.  

\textbf{Case 1.} $b_j\leq2^{g_j}-1$. Suppose there is an $\ell$-Sperner family $\cS$ such that $A\in\osh(\cS)$. Let $A'$ be the set encoded by $g_1,b_1,\ldots,g_{j-1},b_{j-1}$, then $A'\subseteq A$ is in $\osh(\cS)$ as well. Note that the last element in $A'$ is $a'_{b_1+\cdots+b_{j-1}}=g_1+b_1+\cdots+g_{j-1}+b_{j-1}$, so by induction hypothesis,
\[\sum_{i=1}^{b_1+\cdots+b_{j-1}}\frac1{2^{a_i'-i}}\leq\ell-\frac1{2^{g_1+\cdots+g_{j-1}}}.\]
Therefore, \[\sum_{i=1}^{b_1+\cdots+b_j}\frac1{2^{a_i-i}}\leq\ell-\frac1{2^{g_1+\cdots+g_{j-1}}}+\frac{b_j}{2^{g_1+\cdots+g_j}}\leq\ell-\frac1{2^{g_1+\cdots+g_j}}<\ell.\]

On the other hand, if $\sum_{i=1}^{b_1+\cdots+b_j}\frac1{2^{a_i-i}}<\ell$, then $\sum_{i=1}^{b_1+\cdots+b_{j-1}}\frac1{2^{a_i-i}}<\ell$ as well. Thus, by induction hypothesis, there is an $\ell$-Sperner family $\cS$ satisfying $A'\in\osh(\cS)$. Let $A''$ be the set encoded by $g_1,b_1,\ldots,g_j,2^{g_j}-1$. Then, by Lemma~\ref{lemma:extend}, there exists an $\ell$-Sperner family $\cS'$ such that $A''\in\osh(\cS')$. Since $b_j\leq 2^{g_j}-1$, $A\subseteq A''$ is in $\osh(\cS')$ as well.

\textbf{Case 2.} $b_j\geq2^{g_j}$. Suppose there is an $\ell$-Sperner family $\cS$ such that $A\in\osh(\cS)$. Let $b=\ceil{(b_j+1)/2^{g_j}}$, and let $A'$ be the set encoded by $g_1,b_1,\ldots,g_{j-1},b_{j-1}+b-1$. Then, by Corollary~\ref{cor:shiftback}, there exists an $\ell$-Sperner family $\cS'$ such that $A'\in\osh(\cS')$. Thus, by induction hypothesis,
\begin{align*}
\sum_{i=1}^{b_1+\cdots+b_j}\frac1{2^{a_i-i}}&=\frac{b_j}{2^{g_1+\cdots+g_j}}+\sum_{i=1}^{b_1+\cdots+b_{j-1}}\frac1{2^{a_i-i}}\\
&\leq\frac{b2^{g_j}-1}{2^{g_1+\cdots+g_j}}+\sum_{i=1}^{b_1+\cdots+b_{j-1}}\frac1{2^{a_i-i}}\\
&=\frac{2^{g_j}-1}{2^{g_1+\cdots+g_j}}+\sum_{i=1}^{b_1+\cdots+b_{j-1}+b-1}\frac1{2^{a_i'-i}}\\
&\leq\frac{2^{g_j}-1}{2^{g_1+\cdots+g_j}}+\ell-\frac1{2^{g_1+\cdots+g_{j-1}}}=\ell-\frac1{2^{g_1+\cdots+g_j}},
\end{align*}
where in the last inequality we used that the last element in $A'$ is $a'_{b_1+\cdots+b_{j-1}+b-1}=g_1+b_1+\cdots+g_{j-1}+b_{j-1}+b-1$.

On the other hand, suppose that $\sum_{i=1}^{b_1+\cdots+b_j}\frac1{2^{a_i-i}}<\ell$, then \[\sum_{i=1}^{b_1+\cdots+b_j}\frac1{2^{a_i-i}}\leq\ell-\frac1{2^{g_1+\cdots+g_j}}\] 
as the last element in $A$ is $a_{b_1+\cdots+b_j}=g_1+b_1+\cdots+g_j+b_j$. It follows that for $A'$,
\[\sum_{i=1}^{b_1+\cdots+b_{j-1}+b-1}\frac1{2^{a_i'-i}}\leq\ell-\frac{b_j+1}{2^{g_1+\cdots+g_j}}+\frac{b-1}{2^{g_1+\cdots+g_{j-1}}}<\ell.\]
Hence, by induction hypothesis, there exists an $\ell$-Sperner family $\cS$ with $A'\in\osh(\cS)$. Let $A''$ be the set encoded by $g_1,b_1,\ldots,g_{j-1},b_{j-1},g_j,b2^{g_j}-1$, and note that $A\subseteq A''$. Then, by Corollary~\ref{cor:extend}, there exists an $\ell$-Sperner family $\cS'$ with $A''\in\osh(\cS')$, and hence with $A\in\osh(\cS')$ as well.
\end{proof}

\section{Two complete levels}\label{sec:2levels}
In this section, we study $\osh(\cF)$ when $\cF$ is a union of complete levels in $\cP([n])$. To state our results in this section, we first need to generalize the well-known concept of ballot sets. 
\begin{definition}
     Let $t\geq 0$ be an integer, and let $S\subseteq [n]$ be a subset with elements $s_1<s_2<\cdots<s_k$.
     \begin{itemize}
         \item $S$ is called a \emph{$t$-ballot set} if for every $i\in[k]$, \[s_i\geq2i-t.\] Equivalently, $S$ is a $t$-ballot set if $|S\cap [m]|\le\floor{\frac{m+t}{2}}$ for every $m\in[n]$.
        \item The smallest non-negative integer $t$ such that $S$ is a $t$-ballot set is denoted by $\tm(S)$.
     \end{itemize}
\end{definition}
Note that a $0$-ballot set is just a ballot set in the classical sense~\cite{anstee2002shattering}. Furthermore, if $S$ is a $t$-ballot set and $t<t'$, then $S$ is a $t'$-ballot set as well. 

One of the main results of \cite{anstee2002shattering} is the determination of $\osh(\cF)$ when $\cF=\binom{[n]}k$ is a complete level of $\cP([n])$. This was extended in \cite{ell-wide} to the case when $\cF$ is the union of $\ell$ consecutive complete levels in $\cP([n])$ for any positive integer $\ell$. 
\begin{theorem}[{\cite[Theorem 4.1]{ell-wide}}]\label{thm:ellwide}
Let $0\leq\ell-1\leq k\leq n$.
\begin{itemize}
    \item If $k<(n+\ell)/2$, then
    \[\osh\left(\bigcup_{i=k-\ell+1}^k\binom{[n]}i\right)=\{A\subseteq[n]:|A|\leq k, \text{and }A\text{ is }(\ell-1)\text{-ballot}\}.\]
    \item If $k\geq(n+\ell)/2$, then
    \[\osh\left(\bigcup_{i=k-\ell+1}^k\binom{[n]}i\right)=\{A\subseteq[n]:|A|\leq n-k+\ell-1, \text{and }A\text{ is }(\ell-1)\text{-ballot}\}.\]
\end{itemize}
\end{theorem}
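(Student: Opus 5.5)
The plan is to use the identity $|\osh(\cF)|=|\cF|$ (equivalently Theorem~\ref{thm:shift=osh}). Write $\cF=\bigcup_{i=k-\ell+1}^k\binom{[n]}i$ and let $\mathcal{R}$ be the family on the right-hand side. It suffices to prove two things: $\osh(\cF)\subseteq\mathcal{R}$, and $|\mathcal{R}|=|\cF|$. Equality then follows because both families have the same finite size.

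\textbf{Step 1: containment.} Let $A=\{s_1<\cdots<s_q\}\in\osh(\cF)$.

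Size bound. Some member of $\cF$ contains $A$, so $|A|\le k$. Since $\osh(\cF)=\osh(\cF^c)$ and $\cF^c$ is the union of levels $n-k,\ldots,n-k+\ell-1$, the same argument gives $|A|\le n-k+\ell-1$. In the first case $k<(n+\ell)/2$ means $k\le n-k+\ell-1$, so the binding bound is $|A|\le k$. In the second case it is $|A|\le n-k+\ell-1$.

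Ballot condition. Fix $i\in[q]$. Because $\osh(\cF)$ is a downset, $S_i=\{s_1,\ldots,s_i\}\in\osh(\cF)$. Proposition~\ref{def:osh-direct} then gives $2^i$ sets of $\cF$ which agree on $[s_i+1,n]$, say with common part $D$, and whose traces on $S_i$ are all of $\cP(S_i)$.
\begin{itemize}
\item The set $G$ whose trace on $S_i$ is all of $S_i$ has $|G|\ge |D|+i$.
\item The set $G'$ whose trace on $S_i$ is empty has $|G'|\le |D|+(s_i-i)$.
\end{itemize}
Both sizes lie in a window of $\ell$ consecutive levels, so $i-(s_i-i)\le \ell-1$, that is, $s_i\ge 2i-(\ell-1)$. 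Hence $A$ is $(\ell-1)$-ballot, and $\osh(\cF)\subseteq\mathcal{R}$.

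\textbf{Step 2: counting $\mathcal{R}$.} Encode $A\subseteq[n]$ as a lattice path of $n$ steps: an up-step at each $m\in A$ and a down-step otherwise. The condition $|A\cap[m]|\le\floor{(m+\ell-1)/2}$ for all $m$ says the height never reaches $\ell$. The reflection principle at the line of height $\ell$ maps bad paths with $j$ up-steps bijectively onto paths with $j-\ell$ up-steps. So the number of $(\ell-1)$-ballot $j$-subsets should be $\binom nj-\binom n{j-\ell}$. This needs checking in the range $j\le (n+\ell-1)/2$; when $j\le \ell-1$ nothing is excluded and the formula still holds with $\binom n{j-\ell}=0$.

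\textbf{Step 3: telescoping.} In the first case, summing over $j=0,\ldots,k$ telescopes to $\sum_{j=k-\ell+1}^{k}\binom nj=|\cF|$. In the second case, summing over $j\le n-k+\ell-1$ gives $\sum_{j=n-k}^{n-k+\ell-1}\binom nj$. By $\binom nj=\binom n{n-j}$ this equals $|\cF|$.

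The main obstacle is Step 2. I must confirm that every $j$ being summed lies in the range where the reflection count is valid, namely $j\le k<(n+\ell)/2$ in the first case and $j\le n-k+\ell-1\le (n+\ell)/2$ in the second. The boundary/parity cases, where $\floor{(m+\ell-1)/2}$ matters, need particular care.

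If the counting proves awkward, the fallback is to compute $T(\cF)$ directly via Theorem~\ref{thm:shift=osh}, using an induction on $n$ that splits on whether $n\in A$. In that approach $\mathcal{R}$ decomposes into the analogous families for $[n-1]$, with parameters $(k,\ell)$ and $(k-1,\ell)$, matching $\cF=\{F:n\notin F\}\cup\{F:n\in F\}$.
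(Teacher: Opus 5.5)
The paper does not prove this statement. It quotes it from \cite{ell-wide} and only uses it, mainly the $\ell=1$ case in Section~\ref{sec:2levels}. So there is no in-paper argument to match, and your proposal has to stand as a self-contained proof. It does: the argument is correct and complete.

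\textbf{Step 1.} This is right as written. Take the $2^i$ sets that order shatter $S_i$, and compare the member with full trace to the member with empty trace. They agree outside $[s_i]$, so their sizes differ by at least $i-(s_i-i)$. Both sizes lie in $\ell$ consecutive levels, which gives $s_i\ge 2i-(\ell-1)$. The two size bounds follow from $\osh(\cF)=\osh(\cF^c)$.

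\textbf{Step 2.} The points you flagged are not real obstacles.
\begin{itemize}
\item The height $h_m=2|A\cap[m]|-m$ is an integer, so $|A\cap[m]|\le\floor{(m+\ell-1)/2}$ is exactly $h_m\le \ell-1$. There is no separate parity issue.
\item The reflection count $\binom nj-\binom n{j-\ell}$ is valid precisely when the endpoint satisfies $2j-n\le \ell$.
\item Your summation ranges satisfy this. In the first case $j\le k\le (n+\ell-1)/2$. In the second case $j\le n-k+\ell-1\le (n+\ell)/2-1$.
\item The telescoping then needs only $k-\ell+1\ge 0$ and $n-k\ge 0$, which are the hypotheses.
\end{itemize}
So the shifting fallback is unnecessary.

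\textbf{Comparison with the source.} As the introduction indicates, \cite{ell-wide} belongs to the Gr\"obner-basis line of work. Your route never constructs families that order shatter the $(\ell-1)$-ballot sets. The inclusion of the right-hand side in $\osh(\cF)$ comes for free from $|\osh(\cF)|=|\cF|$. This is the same reduction the paper uses to prove Theorem~\ref{thm:shift=osh}.

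The price is that the depth sits inside that cardinality identity, Theorem~1.4 of \cite{anstee2002shattering}. You also get no explicit shattering families or Gr\"obner basis. The counting trick also depends on knowing the candidate answer and being able to count it exactly. That is why, for non-consecutive levels, the paper turns to explicit constructions such as Proposition~\ref{prop:oshevenconstruction}. What your approach buys is a short, elementary combinatorial proof for the consecutive-level case.
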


In this section, we consider the case when $\cF=\binom{[n]}{a}\cup\binom{[n]}{a+d}$ is the union of two complete, but non-consecutive levels.

Recall that $\osh(\cF)$ is a downset for any $\cF\subseteq\cP([n])$. Equivalently, if $S\in\osh(\cF)$ and $S'\subseteq S$, then $S'\in\osh(\cF)$ as well. We first show that in the more general case when $\cF=\binom{[n]}{a_1}\cup\binom{[n]}{a_2}\cup\cdots\cup\binom{[n]}{a_s}$ is a union of not necessarily consecutive complete levels, $\osh(\cF)$ is an upset with respect to another well-known partial order $\preceq$ on $\cP([n])$ defined as follows. Note that this is an extended version of the poset $L(k,n)$ introduced by Stanley \cite{doi:10.1137/0601021}.

\begin{definition}\label{def:prec}
Let $S$ and $S'$ be subsets of $[n]$. 
\begin{itemize}
    \item If $|S|\not=|S'|$, then $S$ and $S'$ are incomparable under $\preceq$.
    \item If $S$ and $S'$ both have size $k$, with elements $s_1<s_2<\cdots<s_k$ and $s_1'<s_2'<\cdots<s_k'$, respectively, then $S\preceq S'$ if and only if $s_i\leq s_i'$ for every $i\in[k]$.
    \item For convenience, if $S\preceq S'$ and $S\not=S'$, then we denote this by $S\prec S'$.
\end{itemize}
\end{definition}

The following lemma and its corollary show that if $\cF$ is a union of complete levels, $S\in\osh(\cF)$, and $S\preceq S'$, then $S'\in\osh(\cF)$ as well. Therefore, $\osh(\cF)$ is an upset with respect to $\preceq$ in this case. 
\begin{lemma}\label{lem:SL}
Let $0\le a_1<a_2<\cdots <a_s\le n$. Suppose $S\in\osh\left(\binom{[n]}{a_1}\cup\binom{[n]}{a_2}\cup\cdots\cup\binom{[n]}{a_s}\right)$, and the elements in $S$ are $s_1<s_2<\cdots<s_k$. If $i<k$ and $s_i+1<s_{i+1}$, or $i=k$ and $s_i<n$, then $S'=(S\setminus\{s_i\})\cup\{s_i+1\}$ also satisfies $S'\in\osh\left(\binom{[n]}{a_1}\cup\binom{[n]}{a_2}\cup\cdots\cup\binom{[n]}{a_s}\right)$.
\end{lemma}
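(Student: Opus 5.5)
The plan is to transport the witnessing family for $S$ by the transposition that swaps $s_i$ and $s_i+1$. Let $\cF=\binom{[n]}{a_1}\cup\cdots\cup\binom{[n]}{a_s}$. The first key observation is that $\cF$ is invariant under every permutation of $[n]$, since a permutation preserves the size of each set. Let $\sigma$ be the transposition of $[n]$ exchanging $s_i$ and $s_i+1$. The hypotheses ($s_i+1<s_{i+1}$ if $i<k$, or $s_k<n$ if $i=k$) guarantee that $s_i+1\in[n]$ and $s_i+1\notin S$. Hence the elements of $S'=\sigma(S)$ in increasing order are $s_1<\cdots<s_{i-1}<s_i+1<s_{i+1}<\cdots<s_k$. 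Write $s'_m$ for the $m$-th element of $S'$, so that $s'_m=s_m$ for $m\neq i$ and $s'_i=s_i+1$.

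By Proposition~\ref{def:osh-direct}, there is a family $\cG=\{G_1,\ldots,G_{2^k}\}\subseteq\cF$ in standard order satisfying \ref{osh:1} and \ref{osh:2} for $S$. I would take $G'_j=\sigma(G_j)$ for each $j$. By permutation invariance each $G'_j\in\cF$, and the $G'_j$ are distinct because $\sigma$ is a bijection. It then remains to verify \ref{osh:1} and \ref{osh:2} for $S'$ with the same indexing.

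For \ref{osh:1}, we have $s'_m\in G'_j$ if and only if $\sigma^{-1}(s'_m)=s_m\in G_j$. So the membership pattern of $S'$ in the $G'_j$ is exactly that of $S$ in the $G_j$.

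For \ref{osh:2}, fix $m$ and two indices $a,b$ in the same block of length $2^m$. There are three cases.
\begin{itemize}
\item If $m>i$, then $s_m\ge s_{i+1}>s_i+1$, so the interval $[s'_m+1,n]=[s_m+1,n]$ contains neither $s_i$ nor $s_i+1$. Hence $\sigma$ acts as the identity on this interval, and the equality $G_a\cap[s_m+1,n]=G_b\cap[s_m+1,n]$ transfers directly.
\item If $m<i$, then $s_m+1\le s_i$, so $[s_m+1,n]$ contains both $s_i$ and $s_i+1$ and is therefore mapped onto itself by $\sigma$. Applying $\sigma$ to the equality for $G_a,G_b$ gives the equality for $G'_a,G'_b$.
\item If $m=i$, the new interval $[s_i+2,n]$ is contained in the old one $[s_i+1,n]$ and is fixed pointwise by $\sigma$. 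So the equality on the larger interval restricts to the required one.
\end{itemize}

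The only point needing care is this bookkeeping of which intervals $[s'_m+1,n]$ are $\sigma$-invariant, in particular the case $m=i$ where the interval shrinks. I expect no genuine obstacle. Applying Proposition~\ref{def:osh-direct} in the converse direction then gives $S'\in\osh(\cF)$.
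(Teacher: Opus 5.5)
Your proposal is correct and is the same argument as the paper's. The paper's $G_j'$, obtained by ``swapping the membership of $s_i$ and $s_i+1$'', is exactly your $\sigma(G_j)$, and your three-case check of condition \ref{osh:2} (intervals preserved by $\sigma$ for $m<i$, fixed pointwise for $m>i$, shrunk and fixed pointwise for $m=i$) fills in the verification the paper leaves as easy.
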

\begin{proof}
Let $\mathcal{G}=\{G_1,G_2,\ldots,G_{2^k}\}\subseteq\binom{[n]}{a_1}\cup\binom{[n]}{a_2}\cup\cdots\cup\binom{[n]}{a_s}$ be a family in standard order satisfying $S\in\osh(\cG)$. Define $\cG'\subseteq\binom{[n]}{a_1}\cup\binom{[n]}{a_2}\cup\cdots\cup\binom{[n]}{a_s}$ by ``swapping" the membership of $s_i$ and $s_i+1$ for every set in $\cG$. More formally, for every $G_j\in\cG$, include $G_j'$ in $\cG'$, where
\[G_j'=\begin{cases}G_j&\text{ if }|G_j\cap\{s_i,s_i+1\}|\text{ is even}\\
    (G_j\setminus\{s_i\})\cup\{s_i+1\}&\text{ if }G_j\cap\{s_i,s_i+1\}=\{s_i\}\\
    (G_j\setminus\{s_i+1\})\cup\{s_i\}&\text{ if }G_j\cap\{s_i,s_i+1\}=\{s_i+1\}\\
    \end{cases}.\]
Then, $\cG'\subseteq\binom{[n]}{a_1}\cup\binom{[n]}{a_2}\cup\cdots\cup\binom{[n]}{a_s}$ as $|G_j|=|G_j'|$ for every $j\in[2^k]$. Moreover, since for every $j\in[2^k]$, $s_i\in G_j$ if and only if $s_i+1\in G_j'$, it is easy to verify $S'\in\osh(\cG')$ using Proposition~\ref{def:osh-direct}. 
\end{proof}

\begin{corollary}\label{cor:SL-order}
Let $0\le a_1<a_2<\cdots <a_s\le n$. If $S\in\osh\left(\binom{[n]}{a_1}\cup\binom{[n]}{a_2}\cup\cdots\cup\binom{[n]}{a_s}\right)$ and $S\preceq S'$, then $S'\in\osh\left(\binom{[n]}{a_1}\cup\binom{[n]}{a_2}\cup\cdots\cup\binom{[n]}{a_s}\right)$.
\end{corollary}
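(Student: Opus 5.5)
The plan is to obtain Corollary~\ref{cor:SL-order} by applying Lemma~\ref{lem:SL} repeatedly. Each application moves one element of $S$ up by one, and we go from $S$ to $S'$ through a chain of such elementary moves, keeping membership in $\osh$ at every step. We may take $|S|=|S'|=k$; by Definition~\ref{def:prec} this is automatic when $S\preceq S'$. Write $s_1<\cdots<s_k$ and $s_1'<\cdots<s_k'$, so that $s_i\le s_i'$ for all $i$. We induct on the potential $\Phi(S)=\sum_{i=1}^k (s_i'-s_i)\ge 0$. If $\Phi=0$ then $S=S'$ and there is nothing to prove.

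For the step, assume $S\prec S'$. Let $i$ be the \emph{largest} index with $s_i<s_i'$. We check that Lemma~\ref{lem:SL} applies at position $i$.
\begin{itemize}
\item If $i=k$, then $s_k<s_k'\le n$, which is exactly the lemma's hypothesis $s_k<n$.
\item If $i<k$, then by maximality of $i$ we have $s_{i+1}=s_{i+1}'$. Hence $s_i+1\le s_i'<s_{i+1}'=s_{i+1}$, which gives $s_i+1<s_{i+1}$.
\end{itemize}
In either case Lemma~\ref{lem:SL} yields $S_1=(S\setminus\{s_i\})\cup\{s_i+1\}\in\osh(\cF)$, where $\cF$ is the union of complete levels in the statement.

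We then confirm that $S_1$ sets up the induction. Its elements in increasing order are $s_1,\ldots,s_{i-1},s_i+1,s_{i+1},\ldots,s_k$; the order is preserved because $s_i+1<s_{i+1}$ when $i<k$. Since $s_i+1\le s_i'$ and every other coordinate is unchanged, $S_1\preceq S'$ still holds. Moreover $\Phi(S_1)=\Phi(S)-1$, so the induction hypothesis applied to $S_1$ gives $S'\in\osh(\cF)$.

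The only delicate point is choosing the index so that Lemma~\ref{lem:SL}'s gap condition holds. Taking the largest index with a strict inequality is what guarantees that the next element $s_{i+1}=s'_{i+1}$ is strictly above $s_i+1$. The rest of the argument is bookkeeping.
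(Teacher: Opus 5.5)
Your proof is correct and is essentially the paper's argument. The paper also applies Lemma~\ref{lem:SL} repeatedly to $i=k,k-1,\ldots,1$ in that order, raising each $s_i$ to $s_i'$. That is exactly the sequence of moves your choice of the largest index with $s_i<s_i'$ produces, and you additionally spell out the check of the gap condition $s_i+1<s_{i+1}$, which the paper leaves implicit.
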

\begin{proof}
By Definition~\ref{def:prec}, since $S\preceq S'$, we may assume that the elements in $S$ and $S'$ are $s_1<s_2<\cdots<s_k$ and $s_1'<s_2'<\cdots<s_k'$, respectively, with $s_i\leq s_i'$ for every $i\in[k]$. Then, the result follows after repeatedly applying Lemma~\ref{lem:SL} to $i=k,k-1,\ldots,1$ in this order to increase $s_i$ to $s_i'$. 
\end{proof}

From now on, we consider $\osh\left(\binom{[n]}{a}\cup\binom{[n]}{a+d}\right)$ for some $0\le a\le n-d$ with $d\geq2$, as the $d=1$ case is a special case of Theorem~\ref{thm:ellwide}.
According to Corollary~\ref{cor:SL-order}, it suffices to determine all the \slm\ sets in $\osh\left(\binom{[n]}{a}\cup\binom{[n]}{a+d}\right)$.

The first step is to determine an upper bound on $\tm(S)$ if $S\in\osh\left(\binom{[n]}{a}\cup\binom{[n]}{a+d}\right)$.
\begin{proposition}\label{prop:tmin}
    Let $S\in\osh\left(\binom{[n]}{a}\cup\binom{[n]}{a+d}\right)$. If $\tm(S)\geq1$, then 
    \[\tm(S)\le M:=\min\{d,a+1,n-a-d+1,n-2d+2\}.\] 
\end{proposition}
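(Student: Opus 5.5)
Write $t=\tm(S)\ge1$. By minimality, $S$ is not $(t-1)$-ballot, so there is an index $i$ with $s_i=2i-t$, i.e. $s_i\le 2i-t$ and equality must hold since $S$ is $t$-ballot. Set $m=s_i$. Then the prefix $P=S\cap[m]=\{s_1,\dots,s_i\}$ has $|P|=i=(m+t)/2$, so $P$ contains $t$ more elements than its complement $[m]\setminus P$, which has $(m-t)/2$ elements. The plan is to take a family $\cG=\{G_1,\dots,G_{2^k}\}\subseteq\binom{[n]}{a}\cup\binom{[n]}{a+d}$ in standard order shattering $S$ (Proposition~\ref{def:osh-direct}). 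We then restrict attention to a block of $2^i$ consecutive sets. By~\ref{osh:2} with index $i$, these sets all agree on $[s_i+1,n]=[m+1,n]$, say with common part $D$. By~\ref{osh:1}, they realise every subset of $P$. Each such set is $B\cup C\cup D$ with $C\subseteq P$ arbitrary and $B\subseteq[m]\setminus P$, with $|B|\le (m-t)/2$.

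Next we count sizes. As $C$ ranges over $\cP(P)$, $|C|$ takes all $i+1$ values $0,\dots,i$. The total size $|B|+|C|+|D|$ must lie in $\{a,a+d\}$. Moreover, $|B|\in[0,(m-t)/2]$, an interval of length $(m-t)/2=i-t$. Hence $|C|=|G|-|D|-|B|$ lies in the union of two intervals $[a-|D|-(i-t),\,a-|D|]$ and $[a+d-|D|-(i-t),\,a+d-|D|]$, each containing $i-t+1$ integers. These must cover $[0,i]$, which has $i+1$ integers.

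We extract the four bounds from this covering.
\begin{itemize}
\item Two intervals of length $i-t+1$ cover $i+1$ integers. If they overlap or abut, the union spans at most $(i-t+1)+d$ consecutive integers. It must also contain $[0,i]$ with no gap, which needs $d\le i-t+1$; otherwise there is a gap of size $d-(i-t+1)$. Either way we obtain $i+1\le (i-t+1)+d$, i.e. $t\le d$.
\item Using $|C|=0$ with $|C|\ge0$, and the top value $|C|=i\le a+d-|D|$, we bound the number of realised sizes by the actual ranges. The sets of size $a$ give at most $\min(a,\,i-t)+1$ values of $|C|$, because $|C|\le a$. A careful version of the first count, with the lower interval truncated at $a$, gives $t\le a+1$.
\item Applying the same argument to complements, using $\osh(\cF)=\osh(\cF^c)$ where $\cF^c$ consists of levels $n-a-d$ and $n-a$ with the same gap $d$, gives $t\le n-a-d+1$.
\item The bound $t\le n-2d+2$ should come from the fact that the block needs room: $|B|\le(m-t)/2$ and the complement of $G$ inside $[m]$. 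If $d$ is large relative to $n$, the two levels cannot both meet the block unless the free elements in $[m]\setminus P$ and in $[m+1,n]$ absorb the difference $d$. Using $m\le n$, so $i\le (n+t)/2$, together with the requirement that the gap between levels satisfies $d\le i-t+1$ or some level is avoided entirely, should give $t\le n-2d+2$.
\end{itemize}

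The main obstacle is making the covering argument precise in the degenerate cases. One such case is when one of the two intervals falls entirely outside $[0,i]$. Then a single interval of length $i-t+1$ would have to cover $i+1$ values, which is impossible when $t\ge1$. This is exactly why the hypothesis $\tm(S)\ge1$ is needed. The other degenerate case concerns the truncations at $0$ and at $a$, which produce the bounds $a+1$ and $n-2d+2$. I would handle these by choosing the worst block and explicitly writing the constraint $0\le |B|\le\min\{(m-t)/2,\ a-|C|-|D|\}$, then taking the extreme values $C=\emptyset$ and $C=P$.
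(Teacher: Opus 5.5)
Your proposal is correct and is essentially the paper's argument. The paper likewise truncates $S$ at an index with $s_k=2k-t$, uses that the shattering sets agree on $[s_k+1,n]$ so that $|G\cap[s_k]|$ takes two values $r,r+d$, and compares $G_1$, $G_{2^k}$ and an adjacent pair at the switch from $r$ to $r+d$; that pair is exactly your no-gap condition $d\le i-t+1$ (the paper's $t\le k-d+1$). If you state this inequality explicitly, your tentative bullets become one-liners via $i\le a+d$, $i\le n-a$ (the set with $C=\emptyset$ must have size $a$) and $2i-t=m\le n$. Your truncated count and the complementation $\osh(\cF)=\osh(\cF^c)$ are also valid routes to the middle two bounds. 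Drop the remark about elements of $[m+1,n]$ absorbing the difference, since $D$ is fixed on the block.
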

\begin{proof}
Let the elements in $S$ be $s_1<s_2<\cdots<s_k$, and suppose $\tm(S)=t\ge 1$. Since $\osh\left(\binom{[n]}{a}\cup\binom{[n]}{a+d}\right)$ is a downset with respect to $\subseteq$, by removing elements if necessary, we may assume without loss of generality that $s_k=2k-t$.

Let $\cG=\{G_1,\ldots G_{2^{k}}\}$ be a family in standard order that order shatters $S$, with $G_1\cap S=\emptyset$ and $G_{2^{k}}\cap S=S$. From Definition~\ref{def:osh-orig}, $G_i\cap[s_k+1,n]$ does not depend on $i$, so $|G_i\cap[s_k]|$ can take on at most two distinct values with difference $d$. 

Let $r=|G_1\cap[s_k]|$. Then, $r\leq s_k-k=k-t$. Since $r\leq k-t<k\leq|G_{2^{k}}\cap[s_k]|$, we must have $|G_{2^{k}}\cap[s_k]|=r+d\geq k$. It follows that $t+r\leq k\leq d+r$, and so $t\leq d$. 

Moreover, $|G_1\cap[s_k]|=r$ and $|G_{2^{k}}\cap[s_k]|=r+d$ imply that there exist $i<j$ such that $|G_i\cap S|=|G_j\cap S|-1$, $|G_i\cap[s_k]|=r$, and $|G_j\cap[s_k]|=r+d$. It follows that $G_j\cap[s_k]$ has at least $d-1$ elements that are not in $S$, so $k-t=s_k-k=|[s_k]\setminus S|\geq d-1$, and thus $t\leq k-d+1$. Since $S$ is order shattered by sets of size $a$ and $a+d$, $|S|=k\leq a+d$, so $t\leq a+1$. 

Next, from $|G_1\cap[s_k]|=r$ and $|G_{2^{k}}\cap[s_k]|=r+d$, it also follows that $|G_1|=a$ and $|G_{2^{k}}|=a+d$. Since $G_1\cap S=\emptyset$, we must have $n-k\geq a$, so $k\leq n-a$. Similarly, using the definition of $G_i$ and $G_j$ above, it follows that $|G_i|=a$ and $|G_j|=a+d$, so $n-k\geq d-1$, and $k\leq n-d+1$. Combining these with $t\leq k-d+1$ from above, we get $t\leq n-a-d+1$ and $t\leq n-2d+2$, respectively. 
\end{proof}

It turns out that the behaviours of \slm\ sets in $\osh\left(\binom{[n]}{a}\cup\binom{[n]}{a+d}\right)$ differ depending on whether $d=2$ or $d>2$, so the rest of the proof is split into two separate sections.
\subsection{$d=2$}
When $d=2$, the \slm\ sets are characterized as follows.
\begin{theorem}\label{thm:d=2-min}
    Let $a\ge 0$ and $n\geq a+2$ be integers.
    The \slm\ sets in $\osh\left(\binom{[n]}{a}\cup\binom{[n]}{a+2}\right)$ are exactly the following.
    \begin{itemize}
        \item $\varnothing$ and $\{2\}$ with $\tm=0$.
        \item $\{2,3\}$ with $\tm=1$ if $n\geq a+3$.
        \item $S_k=\{2,3,4,6,\ldots ,2k\}$ with $\tm(S_k)=2$ for every $2\leq k\leq\min\{a+1,n-(a+1)\}$.
    \end{itemize}
\end{theorem}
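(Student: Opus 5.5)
The plan is to prove two things. First, each listed set lies in $\osh(\cF)$, where $\cF=\binom{[n]}{a}\cup\binom{[n]}{a+2}$. Second, every $S\in\osh(\cF)$ satisfies $S\succeq$ one of the listed sets of the same size. The listed sets of a given size form a single set, except at size $2$, where $\{2,3\}$ is the unique candidate. So once both facts are in place, \slm ality follows, because a strictly smaller set would also have to be dominated by a listed set of its size. Throughout, I write $S_k=\{2,3,4,6,\ldots,2k\}$, so $s_1=2$ and $s_i=2i-2$ for $i\ge 2$. This makes $\tm(S_k)=2$.

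\textbf{Necessity.} I would begin with a parity observation. In the standard order, $G_{2j-1}$ and $G_{2j}$ agree on $[s_1+1,n]$ and differ in $s_1$. Their sizes differ by $0$ or $2$, so they must also differ inside $[s_1-1]$. Hence $s_1\ge 2$, and this rules out $\{1\}$ and every set containing $1$. For $|S|=1$ this gives $S\succeq\{2\}$, and $\{2\}\in\osh(\cF)$ is immediate. For $|S|=k\ge 2$, Proposition~\ref{prop:tmin} with $d=2$ gives $\tm(S)\le 2$, i.e.\ $s_i\ge 2i-2$. Together with $s_1\ge 2$, this gives $S\succeq S_k$ for $k\ge 3$ and $S\succeq\{2,3\}$ for $k=2$.

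It remains to obtain the size constraints. For $\{2,3\}$, Proposition~\ref{prop:tmin} gives $1\le n-a-1$, i.e.\ $n\ge a+2$, which is not quite enough. I would redo the counting of that proof in this small case to get $n\ge a+3$: the sets $G_j$ that contain $\{2,3\}$ and have size $a$ must fit inside $[n]$ together with their partners. For $k\ge 3$ I must show that \emph{no} $k$-set lies in $\osh(\cF)$ when $k>\min\{a+1,n-a-1\}$. Otherwise the ballot set $\{2,4,\ldots,2k\}$ could appear as an extra \slm\ element. By the symmetry $\osh(\cF)=\osh(\cF^c)$, which sends levels $a,a+2$ to $n-a-2,n-a$, it suffices to prove $k\le a+1$.

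The natural attempt is to look at $G_{2^k}\supseteq S$ and its neighbours in the standard order. If $k=a+2$ then $G_{2^k}=S$, which contains nothing below $s_1$ and nothing outside $S$. Propagating the order conditions through the top pairs $G_{2^k-2^{i}}$ (those differing from $G_{2^k}$ only in $s_{i+1}$), each such partner must make up a size difference of $2$ with elements outside $S$ below $s_{i+1}$. I expect that exhausting the single gap element available below $s_2=3$ gives a contradiction once $k\ge 3$. I regard this bound, $k\le a+1$ for $k\ge3$ (together with the $n\ge a+3$ refinement), as the main obstacle, and I would first try to adapt the ``two indices $i<j$'' argument in the proof of Proposition~\ref{prop:tmin} to a sharper count.

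\textbf{Sufficiency.} I would construct explicit families, building order-shattering families recursively via Definition~\ref{def:osh-orig} by adding one element of $S_k$ at a time. Element $2$ is balanced by element $1$: pair $X$ with $X\cup\{1,2\}$, whose sizes differ by $2$. Element $3$ is handled using the absent elements $1$ and $2$'s complements. Each new element $2i-2$ is balanced by toggling the unused element $2i-3$. The levels $a$ and $a+2$ are reached by padding every set with a fixed set of filler elements from $[2k+1,n]$. This padding is possible exactly when $k\le a+1$ and $k\le n-a-1$, and I would verify those counts. For $\{2,3\}$ with $n\ge a+3$ I would write the four sets directly.
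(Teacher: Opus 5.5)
Your overall architecture matches the paper's: $s_1\ge 2$ from the size-parity argument, $\tm(S)\le 2$ from Proposition~\ref{prop:tmin}, domination of every $S\in\osh(\cF)$ by the unique listed set of its size, and padded explicit families for sufficiency. The necessity half, however, rests on an indexing error that leads you to pursue a false statement.

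The set $S_k=\{2,3,4,6,\ldots,2k\}$ has $k+1$ elements: $s_2=3$, and $s_i=2i-2$ only for $3\le i\le k+1$. So a $k$-set $S\in\osh(\cF)$ with $k\ge 3$ satisfies $S\succeq S_{k-1}$, not $S\succeq S_k$. The ``main obstacle'' you set yourself, that no $k$-set with $k\ge3$ lies in $\osh(\cF)$ once $k>\min\{a+1,n-a-1\}$, is false. For $a=1$, $n=4$, the family $\binom{[4]}{1}\cup\binom{[4]}{3}$ order shatters $\{2,3,4\}=S_2$, a set of size $a+2$. Here $G_8=\{2,3,4\}$, and the top partners $G_7=\{1,3,4\}$, $G_6=\{1,2,4\}$, $G_4=\{1,2,3\}$ all reuse the gap element $1$ without conflict. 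So the contradiction you expect at $k=a+2$ does not exist. What is actually needed is only $|S|\le\min\{a+2,n-a\}$, which is immediate from $S\subseteq G_{2^{|S|}}$ and $G_1\cap S=\varnothing$. This gives $k-1\le\min\{a+1,n-a-1\}$, so $S_{k-1}\in\osh(\cF)$ by the construction, and necessity is finished. This is exactly the paper's argument after normalizing to $a+1\le n-a-1$.

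Your plan to extract $n\ge a+3$ for $\{2,3\}$ by a sharper count must also fail, because that condition is not necessary in general. If $n=a+2\ge 3$, the sets $[n]\setminus\{2,3\}$, $[n]\setminus\{1,3\}$, $[n]\setminus\{1,2\}$, $[n]$, in this order, have sizes $a,a,a,a+2$ and order shatter $\{2,3\}$. For $a=1$, $n=3$ these are $\{1\},\{2\},\{3\},\{1,2,3\}$. Hence $\{2,3\}$ is a $\preceq$-minimal member of $\osh(\cF)$ whenever $n\ge 3$. The bullet's condition is not invariant under the complementation $a\mapsto n-a-2$ by which the paper normalizes to $a+1\le n-a-1$. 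In that normalized range, $n\ge a+3$ is equivalent to $n\ge 3$, i.e.\ to $\{2,3\}\subseteq[n]$, and that trivial observation is all the step requires.

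On sufficiency, pairing every $X$ with $X\cup\{1,2\}$ cannot be the rule for $s_1=2$. For $\{2,3\}$ it would force the sets $X$, $X\cup\{1,2\}$, $X\cup\{3\}$, $X\cup\{1,2,3\}$, whose sizes are not all in $\{a,a+2\}$; some $s_1$-pairs must instead swap $1$ and $2$. The family $\{G\subseteq[2k]:|G\cap[4]|\in\{1,3\},\ |G\cap\{2i-1,2i\}|=1\text{ for }3\le i\le k\}$ works once every set is padded by $[n-a+k,n]$. The padding uses exactly $k\le a+1$ and $k\le n-a-1$.
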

\begin{proof}
    Without loss of generality we may assume that $a+1\le n-(a+1)$, as the other case follows by taking complements. For brevity, let $\cF=\binom{[n]}{a}\cup\binom{[n]}{a+2}$. By Proposition~\ref{prop:tmin}, if $S\in\osh(\cF)$ then $\tm(S)\le 2$. 

    First, we prove that the given sets are indeed order shattered by $\cF$. The case of $\varnothing$ is trivial. Let $A=[n-a+1,n]$. Then, $\{A,A\cup\{1,2\}\}\subseteq\binom{[n]}{a}\cup\binom{[n]}{a+2}=\cF$ order shatters $\{2\}$, and $\{A,A\cup\{1,2\},A\cup\{1,3\},A\cup\{2,3\}\}\subseteq\cF$ order shatters $\{2,3\}$ if $n\geq a+3$. Now consider $S_k$ with $2\leq k\leq a+1$. Let \[\cG=\{G:|G\cap[4]|\in\{1,3\}\text{ and }|G\cap\{2i-1,2i\}|=1\text{ for every }3\leq i\leq k\}.\] Note that the size of every $G\in\cG$ is either $k-1$ or $k+1$. Let $\widetilde{\cG}=\{G\cup[n-(a-k),n]:G\in\cG\}$, then it is easy to see that $\widetilde{\cG}$ order shatters $S_k$ and consists of sets of sizes $a$ and $a+2$.
    
    Now, suppose that $S=\{s_1,s_2,\ldots,s_k\}$ is a \slm\ set in $\osh(\cF)$. Then, $S\in\osh(\cF)$ implies that $k=|S|\leq a+2$. If $k=0$, then $S=\varnothing$, so suppose $k\geq1$. If $s_1=1$, then $\{1\}\in\osh(\cF)$. However, this is not possible by Definition~\ref{def:osh-orig} as $\cF=\binom{[n]}{a}\cup\binom{[n]}{a+2}$ does not contain two sets whose size difference is 1. Thus, $s_1\geq2$. If $|S|=1$, then $\{2\}\preceq S$, so $S=\{2\}$ by minimality. If $|S|\ge 2$ and $\tm(S)\leq1$, then $s_i\ge 2i-1$ for every $2\leq i\leq k$, so $S'=\{2,3,5,\ldots ,2k-1\}\preceq S$. If $k\geq 3$, then $S_{k-1}\prec S'$ is in $\osh(\cF)$ from above, so $S$ is not \slm, a contradiction. Hence, $k=2$ and $S=\{2,3\}$. If $|S|\geq 2$ and $\tm(S)=2$, then $k\ge 3$ and $s_i\ge 2i-2$ for every $i\in[k]$. Along with $s_1\geq2$ and $s_2\geq 3$, we have $S_{k-1}\preceq S$, so $S=S_{k-1}$ as otherwise it is not \slm.
    \end{proof}

\subsection{$d>2$}
Unfortunately, the case when $d>2$ is much more complicated. The \slm\ sets in $\osh\left(\binom{[n]}{a}\cup\binom{[n]}{a+d}\right)$ are characterized by the following two results.
\begin{theorem}\label{thm:d>2nonballot}
Let $d>2$ be an integer.
Then, the non-ballot \slm\ sets in $\osh\left(\binom{[n]}a\cup\binom{[n]}{a+d}\right)$ are exactly the following.
\begin{itemize}
    \item $S_{m,j}:=\{2,4,\ldots,2m,2m+d,2m+d+1,\ldots,2m+2d+j-1\}$, where $m,j\geq0$, and $\tm(S_{m,j})=j+1\leq M_1:=\min\{d,a-m+1,n-m-a-d+1,n-2m-2d+2\}$.
    \item $S'_{m,r}:=\{2,4,\ldots,2m,2m+d,2m+d+1,\ldots,2m+3d-2,2m+3d,\ldots,2m+3d+2r\}$, where $m,r\geq 0$, and $\tm(S_{m,r}')=d\leq M_2:=\min\{a-m-r,n-m-r-a-d,n-2m-2r-2d\}$.
\end{itemize}
\end{theorem}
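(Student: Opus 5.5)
The proof splits into two directions. First, the listed sets lie in $\osh(\cF)$ for $\cF=\binom{[n]}a\cup\binom{[n]}{a+d}$. Second, every non-ballot set in $\osh(\cF)$ dominates, under $\preceq$, one of the listed sets that is itself in $\osh(\cF)$. Minimality of the listed sets then follows by comparing them pairwise within each size class and checking that none lies strictly $\preceq$-below another admissible one.

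\textbf{Step 1: constructions.} Handle $S_{m,j}$ as in the proof of Theorem~\ref{thm:d=2-min}. On the block $[2m]$ we use pairs $\{2i-1,2i\}$, each meeting $G$ in exactly one element. This order shatters $\{2,4,\ldots,2m\}$ with $|G\cap[2m]|=m$ constant. On the interval $I=[2m+1,2m+2d+j-1]$ we need a family order shattering $\{2m+d,\ldots,2m+2d+j-1\}$ using only two sizes differing by $d$. Write the first $d-1$ elements of $I$ as a ``buffer'' $U$, followed by the $d+j$ shattered elements $W$. Take $G\cap I=W'\cup U'$ with $W'\subseteq W$ and $U'\subseteq U$. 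If $|W'|\le j$, choose $|U'|=r-|W'|$. If $|W'|\ge d$, choose $|U'|=r+d-|W'|$. Intermediate values of $|W'|$ need the buffer to absorb a jump, so the total size must lie in $\{r,r+d\}$. The second condition of Proposition~\ref{def:osh-direct} forces $G\cap[s_i+1,n]$ to be constant inside blocks. So the buffer must sit below the shattered elements, and the choice of $U'$ may depend only on the pattern of $W'$. This is exactly why $t_{\min}=j+1\le d$ is needed. Finally, pad with a fixed set at the top of $[n]$ to reach sizes $a$ and $a+d$. The constraints $a-m+1$, $n-m-a-d+1$ and $n-2m-2d+2$ in $M_1$ come from requiring room for the padding and for the complements, mirroring Proposition~\ref{prop:tmin} after deleting the $m$ pairs. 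For $S'_{m,r}$, attach $r+1$ further pairs $\{2i-1,2i\}$ after the $t_{\min}=d$ block. I would verify $M_2$ by the same counting.

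\textbf{Step 2: necessity.} Let $S\in\osh(\cF)$ be non-ballot and $\preceq$-minimal, so $t=t_{\min}(S)\ge1$. By Proposition~\ref{prop:tmin}, $t\le M$. Let $p$ be the first index with $s_p=2p-t$, that is, the first place where the ballot slack is used up. Minimality together with Corollary~\ref{cor:SL-order} (moving elements down) should force two things. Before the first violation of $0$-ballot, $S$ must look like $2,4,\ldots,2m$. After that, the elements must be consecutive, $2m+d,\ldots$, where the gap of size $d$ is forced as follows. Let $s$ be the first element with $s_i=2i-1$. Applying the argument of Proposition~\ref{prop:tmin} to $S\cap[s]$ shows that some $G_j$ contains at least $d-1$ non-$S$ elements below, which forces the jump. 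I would carry this out by truncating $S$ at successive elements and applying Proposition~\ref{prop:tmin}-type counting to the truncations. The $m$ even elements reduce the available $a$ and $n$ parameters by $m$ (respectively $2m$), which produces $M_1$.

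Once $t=d$ is reached, the slack is exhausted, and any further elements must respect $t_{\min}=d$. Minimality then gives the pattern $2m+3d,2m+3d+2,\ldots$, which is $S'_{m,r}$.

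\textbf{Main obstacle.} The hardest part is the rigidity claim in Step 2: showing that a $\preceq$-minimal non-ballot set cannot interleave isolated elements inside or after the consecutive block, other than in the $S'$ form. I would try a counting argument on the standard order. Consider consecutive $G_j$ that differ in a single element of $S$. Their total sizes must either agree or differ by $d$. The number of non-$S$ positions below each $s_i$ must then accommodate the required compensation. This bounds how $s_i-2i$ can evolve, and I expect it to pin down exactly the two shapes.
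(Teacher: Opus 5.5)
Your Step 1 is essentially the paper's construction in Proposition~\ref{prop:oshevenconstruction}, with one detail to fix. Do not split into $|W'|\le j$ and $|W'|\ge d$ and leave the middle range open. Use a single threshold $x$ with $j\le x\le d-1$: put $x-|W'|$ buffer elements below the threshold and $x+d-|W'|$ above it, and pad with $y$ elements from the top so that $x+y=a-m$. This choice of $x,y$ is where $\tm(S_{m,j})\le d$ and the other terms of $M_1$ enter.

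The genuine gap is Step 2. The mechanism you propose for forcing the jump $s_{m+1}\ge 2m+d$ cannot work. A Proposition~\ref{prop:tmin}-type count on a family order shattering $S$ only yields inequalities that \emph{every} member of $\osh(\cF)$ satisfies, and the jump is false for non-minimal members. Take $d=4$, $a=3$, $n=12$. Then $S_{2,1}=\{2,4,8,9,10,11,12\}$ is admissible ($M_1=2$), so by Corollary~\ref{cor:SL-order} the non-ballot set $\{3,4,8,9,10,11,12\}$ also lies in $\osh(\cF)$. That set has even prefix length $m=0$ and $s_1=3<d$, yet it passes your count: there are five non-$S$ elements below $s_6=11$. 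So the jump is a consequence of minimality alone. Minimality can only be used by exhibiting a strictly $\preceq$-smaller member of $\osh(\cF)$. Corollary~\ref{cor:SL-order} moves elements \emph{up}, not down, so it cannot supply such a set. Your invariant is also misplaced: in $S_{m,j}$ the first violation of the $0$-ballot condition occurs at $2m+2d-1$, not right after $2m$. The relevant $m$ is the length of the maximal even prefix.

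What is missing is the content of Lemmas~\ref{lemma:equal-sizes}, \ref{lemma:1gap} and~\ref{lemma:dgap}.
\begin{enumerate}
\item Because $d>2$, every set in a standard-order family shattering $\{2,\ldots,2m,\ldots\}$ meets $[2m]$ in exactly $m$ elements. This rules out $s_{m+1}=2m+1$, and it is also what justifies your remark that the even prefix ``uses up'' $m$ of $a$.
\item Assuming $3\le x_1<d$, one shows by induction along the standard order that paired sets have equal size, since their difference is at most $x_k-2k+2<d$. This holds up to the first index $j'$ with $x_{j'}\ge 2j'+d-2$.
\item If no such $j'$ exists, the whole family is single-level. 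Then $S$ is ballot of size at most $M$ by Theorem~\ref{thm:ellwide}, and $B_{m+j}\prec S$.
\item Otherwise, split on $j-j'$. One of $B_{m+j'-1,j-j'+1}$ (Proposition~\ref{prop:ballotconstruction}), $S_{m+j'-1,j-j'-d+1}$ or $S'_{m+j'-1,j-j'-2d+1}$ (Proposition~\ref{prop:oshevenconstruction}) lies strictly below $S$ in $\osh(\cF)$.
\end{enumerate}
Note that the ballot-set results are needed even for this non-ballot theorem, and your plan never invokes them.

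Once $s_{m+1}\ge 2m+d$ is established, the rest of your Step 2 goes through as in the paper: $\tm(S)\le d$ gives $S\succeq S'_{m,r}$, and $|S|\le\min\{a+d,n-a\}$ together with $s_\ell\le n$ gives $M_1$ and $M_2$. Your pairwise-incomparability route to minimality of the listed sets is also valid, since anything $\preceq$ a non-ballot set is non-ballot.
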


\begin{theorem}\label{thm:d>2ballot}
Let $d>2$ be an integer. Let $M:=\max\{\min\{a,n-a\},\min\{a+d,n-a-d\}\}$. 
Then, the ballot \slm\ sets in $\osh\left(\binom{[n]}a\cup\binom{[n]}{a+d}\right)$ are exactly the following.
\begin{itemize}
    \item $B_m:=\{2,4,\ldots,2m\}$ for every $0\leq m\leq M$.
    \item In addition, if $a<n/2<a+d$, $B_{m,j}:=\{2,4,\ldots,2m,2m+d,2m+d+1,\ldots,2m+d+j-1\}$ for all $0\leq m\leq a$ and $j\geq 1$ satisfying $m+j>M=\max\{a,n-a-d\}$ and $2m+2j\leq 2m+d+j-1\leq n$.
\end{itemize}
\end{theorem}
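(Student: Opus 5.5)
The plan has two halves: show that every listed set lies in $\osh(\cF)$, where $\cF=\binom{[n]}a\cup\binom{[n]}{a+d}$, and show that every ballot \slm\ set in $\osh(\cF)$ is one of them. By complementation ($\osh(\cF)=\osh(\cF^c)$, and $\cF^c=\binom{[n]}{n-a}\cup\binom{[n]}{n-a-d}$) we may assume $a+d\le n-a$ when convenient, though the quantity $M$ is already symmetric.

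\emph{Membership.} For $B_m$, I would order shatter using a single level. By Theorem~\ref{thm:ellwide} with $\ell=1$, the ballot set $B_m$ lies in $\osh\binom{[n]}{k}$ whenever $m\le\min\{k,n-k\}$. Taking $k=a$ or $k=a+d$ gives $m\le M$ directly.

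For $B_{m,j}$ with $a<n/2<a+d$, I would give an explicit family. On $[2m]$, take sets meeting each pair $\{2i-1,2i\}$ in exactly one element. On the block $[2m+1,2m+d+j-1]$, take sets whose intersection has size either $c$ or $c+d$, where $c$ ranges over a suitable window. Padding with a fixed tail in $[2m+d+j,n]$ then makes every set have size $a$ or $a+d$. The block itself is handled like the $d=2$ construction in Theorem~\ref{thm:d=2-min}. The first $d-1$ positions together with $j$ consecutive elements are order shattered by the two levels $c,c+d$ of $\cP([d+j-1])$ provided $j\le d$, since every subset of the $j$ block elements is completed by a free choice among the $d-1$ filler positions to size $c$ or $c+d$. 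The inequalities $m+j>M$, $2m+2j\le 2m+d+j-1$ (that is, $j\le d-1$) and $2m+d+j-1\le n$ are exactly what make the padding sizes nonnegative and feasible. Checking the standard order condition of Proposition~\ref{def:osh-direct} is then a block-by-block verification, as in Lemma~\ref{lem:SL}.

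\emph{Minimality and completeness.} Let $S=\{s_1<\cdots<s_k\}$ be ballot and \slm. Since $s_i\ge 2i$ we have $B_k\preceq S$, so if $B_k\in\osh(\cF)$, that is $k\le M$, minimality forces $S=B_k$. Hence assume $k>M$. Then I would show that some $G_j$ in an order shattering family must use both levels nontrivially, and extract the structure from a $G_i,G_j$ pair as in the proof of Proposition~\ref{prop:tmin}: there are $i<j$ with $|G_j\cap S|=|G_i\cap S|+1$ and $|G_j\cap[s_p]|-|G_i\cap[s_p]|=d$. This forces $d-1$ non-$S$ elements below the first element $s_{m+1}$ at which the ballot slack is used. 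That gives $s_{m+1}\ge 2m+d$, and also $s_{m+1+q}\ge 2m+d+q$ for the subsequent elements, so $B_{m,j}\preceq S$ with $j=k-m$. The constraint $m\le a$ and the requirement $a<n/2<a+d$ come from counting sizes of $G_1$ and $G_{2^k}$ as in Proposition~\ref{prop:tmin}. If instead $a+d\le n/2$ or $a\ge n/2$, then $M=\min\{a+d,n-a-d\}$ or $\min\{a,n-a\}$ already bounds $k$ and no such set exists.

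Two checks remain: that the $B_{m,j}$ are pairwise $\preceq$-incomparable and no $B_m$ lies below them, which is immediate since $m+j>M$ rules out comparison with $B_{m+j}$; and that $m$ is chosen maximal with $s_i=2i$ for $i\le m$.

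\emph{Main obstacle.} The hardest step is the lower bound $s_{m+1}\ge 2m+d$ for a general ballot $S$ with $k>M$: we must show that the gap of $d-1$ non-members must appear right after an initial tight stretch, rather than be spread out. I would attempt it by induction on $k$ using the recursive Definition~\ref{def:osh-orig}, splitting the family on the last element and tracking the two possible sizes of $G\cap[s_k]$.
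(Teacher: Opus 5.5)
Your membership half is fine and essentially matches the paper. That is, Theorem~\ref{thm:ellwide} with $\ell=1$ handles $B_m$, and a pair-matching family on $[2m]$ plus a filler block handles $B_{m,j}$, as in Proposition~\ref{prop:ballotconstruction}. The gaps are all in the other half.

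First, the inequality $s_{m+1}\ge 2m+d$ cannot come from a pair $G_i,G_j$ of an order-shattering family. More generally, no argument that uses only $S\in\osh(\cF)$ can give it, because for such sets it is false. Take $n=10$, $a=3$, $d=4$, so $M=3$. Then $B_{2,2}=\{2,4,8,9\}\in\osh(\cF)$ by Proposition~\ref{prop:ballotconstruction}, hence $S=\{2,5,8,9\}\in\osh(\cF)$ by Corollary~\ref{cor:SL-order}. This $S$ is ballot with $|S|=4>M$, but its tight prefix has $m=1$ and $s_2=5<2m+d=6$. So $\preceq$-minimality must be used, by exhibiting a strictly smaller set in $\osh(\cF)$. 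Lemma~\ref{lemma:dgap} does this in three steps:
\begin{itemize}
\item Lemma~\ref{lemma:equal-sizes} gives $|G_i\cap[2m]|=m$. This is where $d>2$ enters, and it is also what actually gives $m\le a$, not a count of $|G_1|$ and $|G_{2^k}|$.
\item An induction then shows $m+k\le|G_i\cap[2m+x_k]|\le m+x_k-k$. This forces paired sets to have equal size up to the first index $j'$ with $x_{j'}\ge 2j'+d-2$.
\item At that index, $S$ is compared with $B_{m+j'-1,j-j'+1}$, $S_{m+j'-1,j-j'-d+1}$ or $S'_{m+j'-1,j-j'-2d+1}$, depending on $j-j'$.
\end{itemize}

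Second, even granting $s_{m+1}\ge 2m+d$, you only get $B_{m,j}\preceq S$ with $j=k-m$. Nothing in your argument gives the listed constraint $j\le d-1$. When $j\ge d$, $B_{m,j}$ is not ballot: it equals $S_{m,j-d}$, and for $j\ge 2d$ it is not in $\osh(\cF)$ at all by Proposition~\ref{prop:tmin}. So minimality does not yield $S=B_{m,j}$; you must show such ballot $S$ are never minimal. The paper does this using $s_{m+i}\ge 2m+2i$ for $i\ge d$. It places the non-ballot set $S_{m,j-d}$ (if $j\le 2d-1$) or $S'_{m,j-2d}$ (if $j\ge 2d$) from Proposition~\ref{prop:oshevenconstruction} strictly below $S$, after checking the $M_1$, $M_2$ conditions. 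Your plan has no such non-ballot constructions, and they are needed both here and inside Lemma~\ref{lemma:dgap}.

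Finally, your minimality check for the listed sets is not enough. Pairwise incomparability only rules out listed ballot sets below them. You must exclude every $T\in\osh(\cF)$ with $T\prec B_m$ or $T\prec B_{m,j}$, including non-ballot $T$. The paper does this with Lemma~\ref{lemma:1gap}, which says no $\{2,4,\ldots,2i-2,2i-1\}$ is order shattered, together with Lemma~\ref{lemma:dgap} applied to a minimal such $T$.
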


We first show that the sets mentioned in Theorem~\ref{thm:d>2nonballot} are indeed order shattered by $\binom{[n]}{a}\cup\binom{[n]}{a+d}$.
\begin{proposition}\label{prop:oshevenconstruction}
Let $d>2$ be an integer. 
Let $\cF=\binom{[n]}a\cup\binom{[n]}{a+d}$.
\begin{itemize}
    \item Let $S_{m,j}=\{2,4,\ldots,2m,2m+d,2m+d+1,\ldots,2m+2d+j-1\}$, where $m,j\geq0$. Then, $S_{m,j}\in\osh(\cF)$ if  $\tm(S_{m,j})=j+1\leq M_1:=\min\{d,a-m+1,n-m-a-d+1,n-2m-2d+2\}$. 
    \item Let $S'_{m,r}=\{2,4,\ldots,2m,2m+d,2m+d+1,\ldots,2m+3d-2,2m+3d,\ldots,2m+3d+2r\}$, where $m,r\geq 0$. Then, $S_{m,r}'\in\osh(\cF)$ if $d\leq M_2:=\min\{a-m-r,n-m-r-a-d,n-2m-2r-2d\}$. 
\end{itemize}
\end{proposition}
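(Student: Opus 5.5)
The plan is to give an explicit product-type construction, in the spirit of the proof of Theorem~\ref{thm:d=2-min}. The key observation, read off from Proposition~\ref{def:osh-direct}, is this: condition~\ref{osh:2} only constrains coordinates \emph{above} $s_i$. So in a family that order shatters $S$, the elements of $[n]\setminus S$ lying below the block of $S$ they precede may be chosen as an arbitrary function of the trace on the later elements of $S$. Everything above $\max S$ must be a fixed set. I will therefore build $\cG$ in three parts:
\begin{itemize}
\item a \emph{pair part}, in which each pair $\{2i-1,2i\}$ contributes exactly one element;
\item a \emph{block part}, which uses the $d-1$ helper elements immediately below the consecutive block;
\item a fixed \emph{padding} set $P$ above $\max S$, which tunes the sizes to be $a$ and $a+d$.
\end{itemize}

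\textbf{First family, $S_{m,j}$.} Put $H=[2m+1,2m+d-1]$, so $|H|=d-1$, and $C=[2m+d,2m+2d+j-1]$, so $|C|=d+j$. Fix an integer $r$ with $j\le r\le d-1$, to be chosen later. For each $Y\subseteq C$ choose $h(Y)\subseteq H$ as follows:
\begin{itemize}
\item if $|Y|\le r$, take $|h(Y)|=r-|Y|$, which lies in $[0,d-1]$ since $r\le d-1$;
\item if $|Y|\ge r+1$, take $|h(Y)|=r+d-|Y|$, which lies in $[0,d-1]$ since $|Y|\le d+j\le d+r$.
\end{itemize}
In both cases $|Y\cup h(Y)|\in\{r,r+d\}$. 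For each $i\in[m]$ and each choice of whether $2i$ is in the set, include $2i$ or $2i-1$ accordingly. Let $P\subseteq[2m+2d+j,n]$ be fixed with $|P|=a-m-r$. The resulting family has all sizes in $\{a,a+d\}$. Order shattering of $S_{m,j}$ is then verified via Proposition~\ref{def:osh-direct}: condition~\ref{osh:1} holds by construction. For condition~\ref{osh:2}, the only non-$S$ elements that vary are $2i-1$ (just below $2i$) and the elements of $H$ (below all of $C$), and $P$ is constant.

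\textbf{Choosing $r$ for $S_{m,j}$.} It remains to find $r$ with $j\le r\le d-1$ and $0\le a-m-r\le n-(2m+2d+j-1)$, that is,
\[
\max\{j,\;a+m+2d+j-1-n\}\le r\le\min\{d-1,\;a-m\}.
\]
Each of the four resulting inequalities is exactly one of the hypotheses $j+1\le d$, $j+1\le a-m+1$, $j+1\le n-m-a-d+1$, $j+1\le n-2m-2d+2$ encoded in $M_1$. This bookkeeping is the step I expect to need the most care, and it is where $M_1$ is explained.

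\textbf{Second family, $S'_{m,r}$.} Here the consecutive block is $C_1=[2m+d,2m+3d-2]$, with $2d-1$ elements, and the helper set is again $H=[2m+1,2m+d-1]$. Since $|Y|$ now ranges over $[0,2d-1]$, the only workable target is $|Y\cup h(Y)|\in\{d-1,2d-1\}$, which is achievable with $|h(Y)|\le d-1$. Above $C_1$, the elements $2m+3d,2m+3d+2,\dots,2m+3d+2r$ are handled by the pair trick, each using the partner element immediately below it (which is not in $S'_{m,r}$). Together with the first $m$ pairs this gives a constant contribution of $m+r+1$. Sizes before padding are therefore $m+r+d$ and $m+r+2d$. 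The padding $P\subseteq[2m+3d+2r+1,n]$ then has size $a-m-r-d$. Its feasibility, $0\le a-m-r-d\le n-2m-3d-2r$, follows from $d\le a-m-r$ and $d\le n-m-r-a-d$. The remaining condition $d\le n-2m-2r-2d$ then holds automatically.

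Finally, I will check $\tm$ of each set only as a sanity check; the proposition itself needs only membership in $\osh(\cF)$. The main obstacle is the verification of condition~\ref{osh:2} in the standard order, which I expect to be routine once the ``helpers lie below'' principle is noted.
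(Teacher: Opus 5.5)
Your proposal is correct and follows the paper's construction essentially step for step. The paper also uses the pair gadget of Claim~\ref{claim:even} for $\{2,4,\ldots,2m\}$ and for the tail $2m+3d,\ldots,2m+3d+2r$. It picks compensating sets $G_I\subseteq[2m+1,2m+d-1]$ so that $|G_I\cup I|\in\{x,x+d\}$, and adds a fixed padding set $J$ above $\max S$. Your $r$ and $|P|$ play exactly the roles of its $x$ and $y$; in the second part the paper's $x$ is likewise forced to equal $d-1$. Your explicit interval for $r$ is a slightly cleaner way to do the same bookkeeping, and your remark that $d\le n-2m-2r-2d$ follows from the other two constraints in $M_2$ is correct.
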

\begin{proof}
We begin with the following claim. 
\begin{claim}\label{claim:even}
For every $m\geq 0$, there exists $\cG'\subseteq\binom{[2m]}{m}$ with $|\cG'|=2^m$ such that $\{2,4,\ldots,2m\}$ is order shattered by $\cG'$.
\end{claim}
\begin{proof}[Proof of Claim~\ref{claim:even}]
Let $\cG'=\{G\subseteq[2m]:|G\cap\{2i-1,2i\}|=1\text{ for each }i\in[m]\}\subseteq\binom{[2m]}{m}$. Then, it is easy to verify from Definition~\ref{def:osh-orig} that $\{2,4,\ldots,2m\}\in\osh(\cG')$. 
\renewcommand{\qedsymbol}{$\boxdot$}
\end{proof}
\renewcommand{\qedsymbol}{$\square$}

First suppose $j+1\leq M_1$, we want to show that $S_{m,j}\in\osh(\cF)$. Since $j\leq M_1-1\leq\min\{n-m-a-d,n-2m-2d+1\}$, we have $(d-1)+(n-2m-2d-j+1)=n-2m-d-j\geq a-m$, and $n-2m-2d-j+1\geq 0$. Thus, we can find $0\leq x\leq d-1$ and $0\leq y\leq n-2m-2d-j+1$ such that $x+y=a-m$. Moreover, we can ensure that if $y>0$, then $x=d-1$. We claim that this implies $x\geq j$. Indeed, if $y=0$, then $x=a-m\geq M_1-1\geq j$ from assumption. If $y>0$ instead, then $x=d-1\geq M_1-1\geq j$, as required.

Define $\cG\subseteq\cF$ as follows. Fix $J\subseteq[2m+2d+j,n]$ with $|J|=y$, which is possible as $y\leq n-2m-2d-j+1$. For each $I\subseteq[2m+d,2m+2d+j-1]$ with $|I|\leq x$, pick $G_I\subseteq[2m+1,2m+d-1]$ with $|G_I|=x-|I|$, which is possible as $x\leq d-1$. For each $I\subseteq[2m+d,2m+2d+j-1]$ with $x+1\leq|I|\leq d+j$, pick $G_I\subseteq[2m+1,2m+d-1]$ with $|G_I|=x+d-|I|$, which is possible as $0\leq x-j\leq x+d-|I|\leq d-1$. Then, let $\cG'$ be given by Claim~\ref{claim:even}, and let
\[\cG=\bigcup_{I\subseteq[2m+d,2m+2d+j-1]}\{G'\cup G_I\cup I\cup J:G'\in\cG'\}.\]
Note that $|\cG|=2^{m+d+j}$, $\cG\subseteq\cF$, and $S_{m,j}\in\osh(\cG)$. 

Now suppose $d\leq M_2$, we want to show $S_{m,r}'\in\osh(\cF)$. Similar to above, using $d\leq M_2\leq\min\{n-m-r-a-d,n-2m-2r-2d\}$, we can find $0\leq x\leq d-1$ and $0\leq y\leq n-2m-3d-2r$, such that $x+y=a-m-r-1$, and $x=d-1$ if $y>0$. Also, if $y=0$, then $x=a-m-r-1\geq M_2-1\geq d-1$.

Fix $J\subseteq[2m+3d+2r+1,n]$ with $|J|=y$. For each $I\subseteq[2m+d,2m+3d-2]$ with $|I|\leq x$, pick $G_I\subseteq[2m+1,2m+d-1]$ with $|G_I|=x-|I|$, which is possible as $x\leq d-1$. For each $I\subseteq[2m+d,2m+3d-2]$ with $x+1\leq|I|\leq 2d-1$, pick $G_I\subseteq[2m+1,2m+d-1]$ with $|G_I|=x+d-|I|$, which is possible as $0\leq x-d+1\leq x+d-|I|\leq d-1$. Let $\cG_1'=\{G\subseteq[2m]:|G\cap\{2i-1,2i\}|=1\text{ for each }i\in[m]\}$ be the family given by Claim~\ref{claim:even}, and let $\cG_2'=\{G\subseteq[2m+3d+2r]\setminus[2m+3d-2]:|G\cap\{2m+3d+2i-1,2m+3d+2i\}|=1\text{ for each }0\leq i\leq r\}$. Then, let
\[\cG=\bigcup_{I\subseteq[2m+d,2m+3d-2]}\{G_1'\cup G_I\cup I\cup G_2'\cup J:G_1'\in\cG_1', G_2'\in\cG_2'\}.\]
Note that $|\cG|=2^{m+2d+r}$, $\cG\subseteq\cF$, and $S_{m,r}'\in\osh(\cG)$.  
\end{proof}

Now we show that the sets in Theorem~\ref{thm:d>2ballot} are order shattered by $\binom{[n]}{a}\cup\binom{[n]}{a+d}$. By the $\ell=1$ case of Theorem~\ref{thm:ellwide}, $\binom{[n]}a\cup\binom{[n]}{a+d}$ order shatters all ballot sets of size up to $M$, which includes $B_m$ for all $0\leq m\leq M$. 

When $a<n/2<a+d$, sets of the form $B_{m,j}$ are handled by the following proposition. 
\begin{proposition}\label{prop:ballotconstruction}
Let $d>2$ be an integer. Suppose $a<n/2<a+d$, and let $M=\max\{a,n-a-d\}$. If $0\leq m\leq a$ and $j\geq 1$ satisfy $m+j>M$ and $2m+2j\leq 2m+d+j-1\leq n$, then $B_{m,j}=\{2,4,\ldots,2m,2m+d,2m+d+1,\ldots,2m+d+j-1\}\in\osh\left(\binom{[n]}a\cup\binom{[n]}{a+d}\right)$.
\end{proposition}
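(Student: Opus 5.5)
The plan is to imitate the construction in the proof of Proposition~\ref{prop:oshevenconstruction} and build an explicit subfamily $\cG\subseteq\binom{[n]}a\cup\binom{[n]}{a+d}$ of size $2^{m+j}$ that order shatters $B_{m,j}$. Split $[n]$ into four parts:
\begin{itemize}
\item $[2m]$, which handles the evens $2,4,\ldots,2m$ through the family $\cG'\subseteq\binom{[2m]}{m}$ of Claim~\ref{claim:even};
\item the filler gap $[2m+1,2m+d-1]$, of size $d-1$;
\item the block $[2m+d,2m+d+j-1]$, which must be fully shattered;
\item the tail $[2m+d+j,n]$, where every set must carry one fixed set $J$ (this is forced by Definition~\ref{def:osh-orig}, since the tail lies above $s_k$).
\end{itemize}
Note first that $2m+2j\le 2m+d+j-1$ is equivalent to $j\le d-1$. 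Moreover the hypothesis $2m+d+j-1\le n$ guarantees that all four parts fit inside $[n]$.

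Concretely, I will choose integers $x,y$ with $x+y=a-m$, $0\le x\le d-1$ and $0\le y\le n-2m-d-j+1$, and fix $J\subseteq[2m+d+j,n]$ with $|J|=y$. For each $I\subseteq[2m+d,2m+d+j-1]$, pick $G_I\subseteq[2m+1,2m+d-1]$ as follows:
\begin{itemize}
\item if $|I|\le x$, take $|G_I|=x-|I|$, so that the resulting set has total size $a$;
\item if $|I|>x$, take $|G_I|=x+d-|I|$, so that the resulting set has total size $a+d$.
\end{itemize}
The second choice is legal because $0\le x+d-j\le x+d-|I|\le d-1$, using $j\le d-1$. Then set
\[\cG=\bigcup_{I}\{G'\cup G_I\cup I\cup J:G'\in\cG'\}.\]
Order shattering of $B_{m,j}$ by $\cG$ then follows directly from Definition~\ref{def:osh-orig}, exactly as in Proposition~\ref{prop:oshevenconstruction}. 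The filler $G_I$ lies strictly between $2m$ and $2m+d$, so it does not interfere with the shattering of the evens or of the block. For each fixed $I$, the sets $G'\cup G_I\cup I\cup J$ agree above $2m$, so Claim~\ref{claim:even} handles the evens.

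The key step, and the main obstacle, is verifying that such $x,y$ exist, i.e.\ that
\[a-m\le (d-1)+(n-2m-d-j+1)=n-2m-j,\]
i.e.\ $a+m+j\le n$. The other requirements are immediate: $a-m\ge 0$ holds because $m\le a$, and the remaining inequality $j\le x+d$ is automatic from $j\le d-1$. I expect $a+m+j\le n$ to hold only in part of the parameter range. When it fails I would pass to complements, using $\osh(\cF)=\osh(\cF^c)$ with $\cF^c=\binom{[n]}{n-a-d}\cup\binom{[n]}{n-a}$, so that the lower level becomes $a'=n-a-d$. However, $B_{m,j}$ is not itself complemented. I will therefore instead redo the construction with the roles of the two sizes arranged differently. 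The first option is to let the $[2m]$ part contribute more than $m$ elements: for each pair $\{2i-1,2i\}$ we may include the odd element $2i-1$ in all sets or in none. This lets the $[2m]$ part contribute anywhere from $0$ up to $m$ extra elements uniformly, which gives extra slack in the size equation, namely
\[x+y+(\text{number of odd elements included})=a-m.\]
The constraints then become $a-m\le (d-1)+(n-2m-d-j+1)+m$, i.e.\ $a+j\le n$, together with $a-m\ge 0$. Here the hypotheses $a<n/2<a+d$, $j\le d-1$ and $m+j>\max\{a,n-a-d\}$ should be exactly what make the arithmetic close. Checking this inequality bookkeeping carefully, and finding where $m+j>M$ is genuinely needed (it is likely needed to rule out the alternative ballot sets $B_{m+j}$ being smaller, not for the construction itself), is where I expect the work to lie.
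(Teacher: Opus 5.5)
Your construction is the paper's: the paper uses exactly your family with $J=\varnothing$, $x=a-m$ and $y=0$, and everything you check about it is correct. The gap is the one step you leave open, the existence of admissible $x,y$ (equivalently $a+m+j\le n$). You call this the main obstacle, predict that it fails in part of the parameter range, and propose a fallback. In fact the inequality always holds, and this is exactly where the hypothesis $m+j>M$ is used. Since $M\ge a$, you get $a\le m+j-1$, so $0\le a-m\le j-1\le d-2$. Hence $x=a-m$, $y=0$ is admissible and no tail $J$ is needed. Equivalently,
\[
a+m+j\le 2m+2j-1<2m+2j\le 2m+d+j-1\le n.
\]
With your tail you could also derive $a+m+j\le n$ from $2a\le n-1$, $2m\le n-d-j+1$ and $j\le d-1$. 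So your guess that $m+j>M$ is dispensable for your version of the construction is right. What is wrong is the expectation that the inequality can fail.

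The fallback should be discarded, because it does not work. If $2i-1$ is put into all sets, the pair $\{2i-1,2i\}$ contributes $2$ or $1$ according to whether $2i\in G'$. If it is put into none, the pair contributes $1$ or $0$. Either way, for fixed $I$ the sets $G'\cup G_I\cup I\cup J$ would have sizes differing by $1$, which is impossible in $\binom{[n]}a\cup\binom{[n]}{a+d}$ when $d>2$. Indeed, Lemma~\ref{lemma:equal-sizes} shows that any family in $\binom{[n]}a\cup\binom{[n]}{a+d}$ order shattering a set that begins $2,4,\ldots,2m$ meets $[2m]$ in exactly $m$ elements in every member, so this kind of slack never exists. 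Once you add the one-line feasibility check above, your proof is complete and coincides with the paper's.
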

\begin{proof}
Note that $2m+d+j-1\geq 2m+2j$ implies that $j\leq d-1$. Also, $m+j>M\geq a$ implies that $a-m\leq j-1<d-1$. 

Define $\cG\subseteq\binom{[n]}a\cup\binom{[n]}{a+d}$ as follows. For each $I\subseteq[2m+d,2m+d+j-1]$ with $|I|\leq a-m$, pick $G_I\subseteq[2m+1,2m+d-1]$ with $|G_I|=a-m-|I|$, which is possible as $0\leq a-m-|I|\leq a-m<d-1$. For each $I\subseteq[2m+d,2m+d+j-1]$ with $a-m+1\leq|I|\leq j$, pick $G_I\subseteq[2m+1,2m+d-1]$ with $|G_I|=a+d-m-|I|$, which is possible as \[1\leq a-m+1\leq a+d-m-j\leq a+d-m-|I|\leq a+d-m-(a-m+1)=d-1.\] Then, let $\cG'$ be given by Claim~\ref{claim:even}, and let
\[\cG=\bigcup_{I\subseteq[2m+d,2m+d+j-1]}\{G'\cup G_I\cup I:G'\in\cG'\}.\]
Note that $|\cG|=2^{m+j}$, $\cG\subseteq\binom{[n]}a\cup\binom{[n]}{a+d}$, and $B_{m,j}\in\osh(\cG)$. 
\end{proof}

Next, we prove the following useful result which says that if $S$ is order shattered by a family $\cG=\{G_1,G_2,\ldots ,G_{2^k}\}\subseteq\binom{[n]}{a}\cup\binom{[n]}{a+d}$ in standard order, and $S$ begins with consecutive even numbers $2,4,\ldots,2m$, then $|G_i\cap[2m]|=m$ for every $i\in[2^k]$.  

\begin{lemma}\label{lemma:equal-sizes}
Let $m,j\ge 0$, $d>2$, and $1\le x_1<x_2<\cdots <x_j$ be integers. Suppose $S=\{2,4,\ldots ,2m, 2m+x_1,2m+x_2,\ldots ,2m+x_j\}$ is order shattered by a family $\{G_1,G_2,\ldots ,G_{2^{m+j}}\}\subseteq\binom{[n]}{a}\cup\binom{[n]}{a+d}$ in standard order. Then, $|G_i\cap [2m]|=m$ for every $i\in[2^{m+j}]$.
\end{lemma}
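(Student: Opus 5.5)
We prove by induction on $p\in[0,m]$ the stronger statement that $|G_i\cap[2p]|=p$ for every $i\in[2^{m+j}]$. The case $p=m$ is the lemma. The elements $2m+x_1,\ldots,2m+x_j$ play no role: we only use conditions~\ref{osh:1} and~\ref{osh:2} of Proposition~\ref{def:osh-direct} for the indices $i\le m$, where $s_i=2i$. The base case $p=0$ is trivial, since $[0]=\varnothing$.

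For the induction step, fix $1\le p\le m$ and assume $|G_i\cap[2p-2]|=p-1$ for all $i$. Partition $[2^{m+j}]$ into consecutive blocks of length $2^p$, namely $((\ell-1)2^p,\ell 2^p]$.

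\emph{Constant tail on a block.} By~\ref{osh:2} applied with $i=p$, all sets in a block agree on $[2p+1,n]$. Since every $G_i$ has size $a$ or $a+d$, the quantity $|G_i\cap[2p]|$ takes at most two values on the block, and if it takes two they differ by exactly $d$.

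\emph{Halves.} Each block splits into two halves of length $2^{p-1}$. By~\ref{osh:1}, the lower half consists of the sets with $2p\notin G_i$ and the upper half of those with $2p\in G_i$. By~\ref{osh:2} applied with $i=p-1$, sets in the same half agree on $[2p-1,n]$. Hence membership of $2p-1$ is constant on each half; let it be $c_0\in\{0,1\}$ on the lower half and $c_1\in\{0,1\}$ on the upper half.

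\emph{Counting.} By the induction hypothesis, every set in the lower half satisfies $|G_i\cap[2p]|=p-1+c_0$, and every set in the upper half satisfies $|G_i\cap[2p]|=p+c_1$. All these values lie in $\{p-1,p,p+1\}$, so they differ by at most $2<d$. Combined with the first step, all values on the block must be equal. Thus $p-1+c_0=p+c_1$, which forces $c_0=1$ and $c_1=0$. Therefore every set in the block has exactly $p$ elements in $[2p]$, completing the induction.

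The only delicate point is the bookkeeping in Proposition~\ref{def:osh-direct}: a block of length $2^i$ agrees above $s_i$, and its two halves are separated by membership of $s_i$. The hypothesis $d>2$ enters only through the observation that a spread of at most $2$ cannot accommodate a size jump of $d$.
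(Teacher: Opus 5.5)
Your proof is correct and follows the paper's argument. It uses induction on $p$: sets in a block of length $2^p$ agree on $[2p+1,n]$, and the two halves are separated by membership of $2p$. The induction hypothesis then keeps their sizes within $2<d$ of each other, so the sizes are equal, which forces $2p-1$ to lie in exactly the lower-half sets. The only cosmetic difference is that you first note membership of $2p-1$ is constant on each half, whereas the paper compares an arbitrary lower-half set with an arbitrary upper-half set directly. For $p=1$ that constancy step is trivial, since the halves are singletons and condition~\ref{osh:2} is only stated for indices $i\ge 1$.
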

\begin{proof}
We use induction on $k$ to prove that for every $0\leq k\leq m$, $|G_i\cap [2k]|=k$ for every $i\in[2^{m+j}]$. The base case when $k=0$ is trivial. Suppose now that $k\geq1$, then by induction hypothesis, $|G_i\cap [2k-2]|=k-1$ for every $i\in[2^{m+j}]$. 
    Observe that by Proposition~\ref{def:osh-direct},
    \[2k\in G_i\text{ if and only if }i\in\bigcup_{\ell=1}^{2^{m-k+j}}[\ell\cdot 2^k-2^{k-1}+1,\ell\cdot 2^k],\] and furthermore for every $\ell\in[2^{m-k+j}]$ and all $(\ell-1)2^k<i<i'\le\ell\cdot 2^k$,
    \[G_i\cap[2k+1,n]=G_{i'}\cap[2k+1,n].\]
    It follows that for every $\ell\in[2^{m-k+j}]$ and all $(\ell-1)2^k<i\le \ell\cdot2^k-2^{k-1}<i'\le \ell\cdot 2^k$, \[|G_i\cap([n]\setminus\{2k-1\})|=|G_{i'}\cap([n]\setminus\{2k-1\})|-1,\] 
    and so $||G_i|-|G_{i'}||\leq2$. However, since $|G_i|,|G_{i'}|\in\{a,a+d\}$ and $d>2$, this forces $|G_i|=|G_{i'}|$, so $G_i$ contains $2k-1$, while $G_{i'}$ does not. Therefore, $|G_i\cap[2k]|=k$ for every $i\in[2^{m+j}]$, as required.
\end{proof}

The next two results show that if $S$ is a \slm\ set order shattered by $\binom{[n]}{a}\cup\binom{[n]}{a+d}$, and $m\geq0$ is the maximum number such that $S$ begins with consecutive even numbers $2,4,\ldots,2m$, then $m\leq a$ and the next number in $S$, if it exists, is at least $2m+d$. 

\begin{lemma}\label{lemma:1gap}
Let $m\ge 0$ and $d>2$ be integers. Then $S=\{2,4,\ldots ,2m, 2m+1\}$ is not order shattered by $\binom{[n]}{a}\cup\binom{[n]}{a+d}$.
\end{lemma}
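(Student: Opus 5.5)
We argue by contradiction, using Lemma~\ref{lemma:equal-sizes}. Suppose $S=\{2,4,\ldots,2m,2m+1\}$ is order shattered by $\binom{[n]}{a}\cup\binom{[n]}{a+d}$. Then there is a family $\cG=\{G_1,\ldots,G_{2^{m+1}}\}$ of sets of sizes $a$ and $a+d$, in standard order, that order shatters $S$. This $S$ is of the form in Lemma~\ref{lemma:equal-sizes} with $j=1$ and $x_1=1$. Hence $|G_i\cap[2m]|=m$ for every $i\in[2^{m+1}]$.

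Next we compare a set that avoids $2m+1$ with one that contains it. By Proposition~\ref{def:osh-direct} applied to the last element $s_{m+1}=2m+1$ (so $i=k=m+1$ and $\ell=1$), all sets of $\cG$ agree on $[2m+2,n]$. Moreover, $2m+1\in G_j$ exactly when $j>2^m$. Take $G_1$, which does not contain $2m+1$, and $G_{2^{m+1}}$, which does. Then
\[
|G_{2^{m+1}}|-|G_1| = \bigl(|G_{2^{m+1}}\cap[2m]|+1\bigr)-|G_1\cap[2m]| = (m+1)-m = 1 .
\]
The two sets therefore differ in size by exactly $1$. But every set of $\cG$ has size $a$ or $a+d$, so any two sizes differ by $0$ or $d\geq 3$. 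This is a contradiction.

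There is no real obstacle here; the only point needing care is the degenerate case $m=0$. Then $S=\{1\}$, Lemma~\ref{lemma:equal-sizes} gives nothing, and the same computation shows directly that $G_1$ and $G_2$ differ only in the element $1$. Their sizes then differ by $1$, which is again impossible, matching the earlier remark in the proof of Theorem~\ref{thm:d=2-min} that $\{1\}$ is never order shattered by two levels at distance greater than $1$.
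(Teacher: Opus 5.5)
Your proof is correct and follows the paper's argument. Lemma~\ref{lemma:equal-sizes} fixes $|G_i\cap[2m]|=m$, and comparing a set without $2m+1$ to one containing it (the two agree on $[2m+2,n]$) gives a size difference of exactly $1$, which is impossible since $d>2$. You are slightly more explicit than the paper about why the two sets agree above $2m+1$. Your separate treatment of $m=0$ is harmless but unnecessary, since Lemma~\ref{lemma:equal-sizes} holds trivially there and the same computation applies.
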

\begin{proof}
Suppose for a contradiction that $\cG=\{G_1,\ldots,G_{2^{m+1}}\}\subseteq\binom{[n]}{a}\cup\binom{[n]}{a+d}$ is a family that order shatters $S$. By Lemma~\ref{lemma:equal-sizes}, $|G_i\cap[2m]|=m$ for every $i\in[2^{m+1}]$. Furthermore, there exists $1\leq i,j\leq 2^{m+1}$ such that $2m+1\in G_j\setminus G_i$. However, this implies that $|G_j|=|G_i|+1$, which is impossible as $d>2$. 
\end{proof}

\begin{lemma}\label{lemma:dgap}
Let $m\ge 0$, $j\geq1$, $d>2$, and $3\le x_1<x_2<\cdots <x_j$ be integers. If $S=\{2,4,\ldots ,2m, 2m+x_1,2m+x_2,\ldots ,2m+x_j\}$ is a \slm\ set order shattered by $\binom{[n]}{a}\cup\binom{[n]}{a+d}$, then $m\leq a$ and $x_1\geq d$.
\end{lemma}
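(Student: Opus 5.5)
Both conclusions will come from Lemma~\ref{lemma:equal-sizes} combined with a size count. Fix a family $\cG=\{G_1,\ldots,G_{2^{m+j}}\}\subseteq\binom{[n]}{a}\cup\binom{[n]}{a+d}$ in standard order that order shatters $S$. By Lemma~\ref{lemma:equal-sizes}, $|G_i\cap[2m]|=m$ for every $i$.

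\emph{The bound $m\le a$.} I would argue that some $G_i$ has size $a$. If every $G_i$ had size $a+d$, then the two sets $G_i,G_{i'}$ differing only in $2m+x_1$ (with the same trace above $2m+x_1$) would force $|G_i\cap[2m+x_1-1]|$ and $|G_{i'}\cap[2m+x_1-1]|$ to differ by exactly one. That contradicts equal sizes unless the sets differ elsewhere below $2m+x_1$. Cleaner: by Proposition~\ref{def:osh-direct}, pair $G_1$ and $G_{2^{m+1}}$, which lie in the same block for $s_{m+1}=2m+x_1$. They agree on $[2m+x_1+1,n]$ and have $m$ elements in $[2m]$, with $2m+x_1\in G_{2^{m+1}}\setminus G_1$. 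Their sizes differ by $1+(\text{difference on }[2m+1,2m+x_1-1])$. Because they lie in $\binom{[n]}a\cup\binom{[n]}{a+d}$ and the positions $2m+x_1$ differ, a nonzero difference is forced, so it must equal $\pm d$; hence one of them has size $a$. Any set of size $a$ contains $m$ elements of $[2m]$, giving $m\le a$.

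\emph{The bound $x_1\ge d$.} For the same pair, let $D=G_{2^{m+1}}\cap[2m+1,2m+x_1-1]$ and $D'=G_1\cap[2m+1,2m+x_1-1]$. Then $|G_{2^{m+1}}|-|G_1|=1+|D|-|D'|\in\{0,\pm d\}$. The value $0$ is possible, but then I would use the hypothesis that $S$ is \slm. Suppose $x_1\le d-1$. Then $|D|-|D'|\in[-(x_1-1),x_1-1]$, so a size jump of $+d$ is impossible and $-d$ would need $x_1-1\ge d+1$; hence all pairs split by $2m+x_1$ have equal sizes. I would then show that $S''=(S\setminus\{2m+x_1\})\cup\{2m+2\}$ (or, if $x_1\le 2$ were allowed, Lemma~\ref{lemma:1gap}) is still order shattered. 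That set is $\prec S$, so $S$ would not be \slm. Concretely, I would modify each $G_i$ by a bijection of $[2m+1,2m+x_1]$ that moves the element $2m+x_1$ to position $2m+2$ and puts the padding elements, as needed, into $[2m+1]\cup[2m+3,2m+x_1]$, preserving sizes. This works because equal sizes let me rearrange $G_i\cap[2m+1,2m+x_1]$ freely within each block. This is the swap argument of Lemma~\ref{lem:SL}, run in the reverse direction, which is legitimate here because within each block the sizes of the $[2m+1,2m+x_1]$-traces are constant up to the indicator of $2m+x_1$.

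\emph{Main obstacle.} The reverse-shift step is the hardest part, because order shattering constrains traces only above each $s_i$. I must check that after rearranging inside $[2m+1,2m+x_1]$, the standard-order conditions of Proposition~\ref{def:osh-direct} still hold for the later elements $2m+x_2,\ldots$. This holds since the traces above $2m+x_1$ are untouched, and the conditions for $2,\ldots,2m$ only involve traces above $2k$, which I will make to depend only on the old data. Alternatively, if this becomes messy, I would aim for a direct contradiction to $x_1\ge 3$ and $x_1<d$ via a counting argument like Lemma~\ref{lemma:1gap}.
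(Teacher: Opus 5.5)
\textbf{The gap is in the bound $m\le a$.} For the pair $G_1,G_{2^{m+1}}$ you get $|G_{2^{m+1}}|-|G_1|=1+|D|-|D'|$, but nothing forces this to be nonzero. The case $|D'|=|D|+1$ is perfectly possible, as you concede in your next paragraph, so nothing prevents every $G_i$ from having size $a+d$.

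In fact $m\le a$ cannot be proved without the $\preceq$-minimality hypothesis. Take $n=6$, $a=0$, $d=3$. The sets $\{1,3,4\},\{2,3,4\},\{1,3,5\},\{2,3,5\}\in\binom{[6]}{3}$, in this order, order shatter $\{2,5\}$, which has $m=1>a$.

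The correct argument runs as follows. If $m>a$, then $|G_i\cap[2m]|=m>a$ forces $|G_i|=a+d$ for every $i$, and you must then produce a strictly smaller order-shattered set. The paper does this with the single-level case of Theorem~\ref{thm:ellwide}: $S$ is then a ballot set of size at most $M$, so $B_{m+j}\prec S$ is order shattered (the inequality is strict since $x_1\ge 3$), contradicting minimality. Alternatively, your own construction from the second part applies verbatim, because it only needs all sets in each $2^{m+1}$-block to have equal size.

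\textbf{Your argument for $x_1\ge d$ is sound and genuinely different from the paper's,} but the modification must be stated precisely. A single bijection of $[2m+1,2m+x_1]$ applied to every $G_i$ does not suffice. Sets in the same block may have different traces on $[2m+1,2m+x_1-1]$, while order shattering $S''$ requires them to agree on $[2m+3,n]$.

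Here is what works. If $x_1<d$, two sets in the same $2^{m+1}$-block agree above $2m+x_1$ and each meets $[2m]$ in $m$ elements. So their sizes differ by at most $x_1<d$, and hence coincide. Thus $c_\ell=|G_i\cap[2m+1,2m+x_1]|$ is constant on the $\ell$-th block, with $1\le c_\ell\le x_1-1$ because the block contains sets both with and without $2m+x_1$. Fix $P_\ell\in\binom{[2m+3,2m+x_1]}{c_\ell-1}$. Replace $G_i\cap[2m+1,2m+x_1]$ by $\{2m+2\}\cup P_\ell$ if $2m+x_1\in G_i$, and by $\{2m+1\}\cup P_\ell$ otherwise. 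This preserves sizes. Since the new trace depends only on the block and on membership of $2m+x_1$, the conditions of Proposition~\ref{def:osh-direct} for $S''=(S\setminus\{2m+x_1\})\cup\{2m+2\}$ follow from those for $S$. Hence $S''\prec S$ is order shattered, contradicting minimality.

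\textbf{Comparison with the paper.} The paper proves, by induction, bounds on $|G_i\cap[2m+x_k]|$ up to a threshold index $j'$. It then, in several cases, exhibits $B_{m+j}$, $B_{m+j'-1,j-j'+1}$, $S_{m+j'-1,j-j'-d+1}$ or $S'_{m+j'-1,r}$ below $S$. This relies on Propositions~\ref{prop:ballotconstruction}, \ref{prop:oshevenconstruction}, \ref{prop:tmin} and Theorem~\ref{thm:ellwide}. Your local shift is shorter and self-contained, and with the fix above it handles both conclusions uniformly. The paper's route identifies which member of the final classification lies below $S$, which the lemma itself does not need.

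\textbf{Your fallback cannot work.} A direct counting contradiction as in Lemma~\ref{lemma:1gap} is impossible. For $d\ge 4$, $\{3\}$ is order shattered by $\{\{1\},\{3\}\}\subseteq\binom{[n]}{1}$, so minimality must also be used to get $x_1\ge d$.
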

\begin{proof}
    Let $\cG=\{G_1,\ldots,G_{2^{m+j}}\}\subseteq\binom{[n]}a\cup\binom{[n]}{a+d}$ be a family in standard order that order shatters $S$. By Lemma~\ref{lemma:equal-sizes}, $|G_i\cap[2m]|=m$ for every $i\in[2^{m+j}]$. If $m>a$, then for every $i\in[2^{m+j}]$, $|G_i|>a$, so $|G_i|=a+d$. This implies that  $S$ is a ballot set of size at most $M$ using the $\ell=1$ case of Theorem~\ref{thm:ellwide}, and so $B_{m+j}=\{2,4,\ldots ,2m+2j\}\prec S$. Furthermore, Theorem~\ref{thm:ellwide} also implies that $B_{m+j}$ is order shattered by $\binom{[n]}{a}\cup\binom{[n]}{a+d}$, contradicting that $S$ is \slm. Thus, $m\leq a$. Also, by Definition~\ref{def:osh-orig}, $|G_i\cap([2m+x_j+1,n])|$ does not depend on the choice of $i\in[2^{m+j}]$. 

Suppose for a contradiction that $x_1<d$. For simplicity, let $x_0=0$. Let $2\leq j'\leq j$ be the minimal index satisfying $2m+x_{j'}\geq 2m+2j'+d-2$ if such an index exists, and let $j'=j+1$ otherwise. 

We use induction on $k$ to show that for every $1\leq k<j'$ and every $i\in[2^{m+j}]$, $m+k\leq|G_i\cap[2m+x_k]|\leq m+x_k-k$. In particular, $x_k\geq 2k$ for every $1\leq k<j'$. Indeed, for every $i\in[2^{m+j}]$, either by induction hypothesis when $k>1$, or using $|G_i\cap[2m]|=m$ from above when $k=1$, we have $m+k-1\leq |G_i\cap[2m+x_{k-1}]|\leq m+x_{k-1}-k+1$. Let $i\in[2^{m+j}]$. Suppose first that $2m+x_k\in G_i$. By Proposition~\ref{def:osh-direct}, if we set $i'=i-2^{m+k-1}$, then $G_i\cap S$ and $G_{i'}\cap S$ only differ at $2m+x_k$, and $G_i\cap[2m+x_k+1,n]=G_{i'}\cap[2m+x_k+1,n]$. Hence, 
\begin{align*}
&\phantom{{}\leq{}}|G_i|-|G_{i'}|=|G_i\cap[2m+x_k]|-|G_{i'}\cap[2m+x_k]|\\
&\leq|G_i\cap[2m+x_{k-1}]|-|G_{i'}\cap[2m+x_{k-1}]|+|G_i\cap[2m+x_{k-1}+1,2m+x_k]|\\
&\leq(m+x_{k-1}-k+1)-(m+k-1)+(x_k-x_{k-1})\\
&=x_k-2k+2<d,
\end{align*}
where the second inequality follows from the induction hypothesis, and the last inequality follows from $k<j'$ and the minimality of $j'$. Similarly, $|G_{i'}|-|G_i|<d$, so $|G_i|=|G_{i'}|$ as they can only have size $a$ or $a+d$. In particular, $|G_i\cap[2m+x_k]|=|G_{i'}\cap[2m+x_k]|$. Therefore, $|G_i\cap[2m+x_k]|\geq1+|G_i\cap[2m+x_{k-1}]|\geq m+k$, and $|G_i\cap[2m+x_k]|=|G_{i'}\cap[2m+x_k]|=|G_{i'}\cap[2m+x_{k-1}]|+|G_{i'}\cap[2m+x_{k-1}+1,2m+x_k]|\leq m+x_{k-1}-(k-1)+(x_k-x_{k-1}-1)=m+x_k-k$, as required. The case when $2m+x_k\not\in G_i$ is the same after letting $i'=i+2^{m+k-1}$ and exchanging the roles of $i'$ and $i$ in the argument above. This completes the induction.

Now, if $j'=j+1$, then applying the result above for $k=j-1$ implies that $m+j-1\leq|G_i\cap[2m+x_{j-1}]|\leq m+x_{j-1}-j+1$. For any distinct $i,i'\in[2^{m+j}]$, $|G_i\cap[2m+x_j+1,n]|=|G_{i'}\cap[2m+x_j+1,n]|$ by Definition~\ref{def:osh-orig}, so by a similar calculation, $||G_i|-|G_{i'}||\leq(m+x_{j-1}-j+1)-(m+j-1)+(x_j-x_{j-1})=x_j-2j+2<d$. Hence, $|G_i|=|G_{i'}|$ for any distinct $i,i'\in[2^{m+j}]$, so $S$ is a ballot set of size at most $M$ by the $\ell=1$ case of Theorem~\ref{thm:ellwide}. Like above, this implies that $B_{m+j}\prec S$, contradicting that $S$ is \slm.

Suppose instead that $j'\leq j$. Then, by applying the result above for $k=j'-1$, we get $|G_i|\geq|G_i\cap[2m+x_{j'-1}]|\geq m+j'-1$ for every $i\in[2^{m+k}]$. Again, if $m+j'-1>a$, then $S$ is a
ballot set of size at most $M$ by the $\ell=1$ case of Theorem~\ref{thm:ellwide}, leading to a contradiction. Hence, $m+j'-1\leq a$. We now split into three cases depending on the size of $j-j'$.

\textbf{Case 1.} $j-j'\leq d-2$. Then, using $j'\geq2$, the definition of $j'$, and $x_k\geq 2k$ for every $1\leq k<j'$, we have $B_{m+j'-1,j-j'+1}=\{2,4,\ldots,2m+2j'-2,2m+2j'+d-2,2m+2j'+d-1,\ldots,2m+j'+d+j-2\}\prec S$. Since $j-j'\leq d-2$, we have $2m+j'+d+j-2\geq 2m+2j$, so $S$ is a ballot set. If $m+j\leq M$, then $B_{m+j}\prec S$, contradicting the minimality of $S$, so $m+j>M$. If $a+d\leq n/2$, then no set of size larger than $a+d=M$ can be order shattered by $\binom{[n]}a\cup\binom{[n]}{a+d}$. If $a\geq n/2$, then no set of size larger than $n-a=M$ can be order shattered by $\binom{[n]}a\cup\binom{[n]}{a+d}$. Therefore, $a<n/2<a+d$, and $B_{m+j'-1,j-j'+1}\in\osh\left(\binom{[n]}{a}\cup\binom{[n]}{a+d}\right)$ by Proposition~\ref{prop:ballotconstruction}, which contradicts that $S$ is \slm.

\textbf{Case 2.} $d-1\le j-j'\le 2d-2$. Consider the set $S_{m+j'-1,j-j'-d+1}$ in Proposition~\ref{prop:oshevenconstruction}, noting that $S_{m+j'-1,j-j'-d+1}\prec S$. First, $j-j'-d+1\leq d-1$. Next, since $S$ is order shattered by $\binom{[n]}{a}\cup\binom{[n]}{a+d}$, we have $m+j=|S|\leq\min\{a+d,n-a\}$, from which it follows that $j-j'-d+1\le a-(m+j'-1)$ and $j-j'-d+1\le n-(m+j'-1)-a-d$. Finally, from $2m+j+j'+d-2\leq s_{m+j}\le n$, we get $j-j'-d+1\leq n-2(m+j'-1)-2d+1$. Putting all of these together gives $j-j'-d+1\leq M_1$, so $S_{m+j'-1,j-j'-d+1}$ is order shattered by $\binom{[n]}{a}\cup\binom{[n]}{a+d}$ by Proposition~\ref{prop:oshevenconstruction}, and $S_{m+j'-1,j-j'-d+1}\prec S$, contradicting the minimality of $S$.

\textbf{Case 3.} $j-j'\geq 2d-1$. By Proposition~\ref{prop:tmin}, $\tm(S)\leq d$, so $s_{m+j'+i}\geq2(m+j'+i)-d$ for every $2d-1\leq i\leq j-j'$. Let $r=j-j'-2d+1$ and consider the set $S'_{m+j'-1,r}$ in Proposition~\ref{prop:oshevenconstruction}, noting that $S'_{m+j'-1,r}\prec S$. Similarly, using $m+j=|S|\leq\min\{a+d,n-a\}$, we get $a-(m+j'-1)-r\geq d$ and $n-(m+j'-1)-r-a-d\geq d$. Furthermore, using $2m+2j-d\leq s_{m+j}\le n$, we have $n-2(m+j'-1)-2r-2d\geq d$. Therefore, $d\leq M_2$, and so $S'_{m+j'-1,r}$ is order shattered by $\binom{[n]}{a}\cup\binom{[n]}{a+d}$ by Proposition~\ref{prop:oshevenconstruction}, and $S'_{m+j'-1,r}\prec S$, contradicting the minimality of $S$.
\end{proof}

Finally, we can put everything together to prove Theorem~\ref{thm:d>2nonballot} and Theorem~\ref{thm:d>2ballot}.
\begin{proof}[Proof of Theorem~\ref{thm:d>2nonballot}]
For brevity, denote $\binom{[n]}a\cup\binom{[n]}{a+d}$ by $\cF$. By Proposition~\ref{prop:oshevenconstruction}, each $S_{m,j}$ and $S'_{m,r}$ satisfying the condition in Theorem~\ref{thm:d>2nonballot} is in $\osh(\cF)$.

First, we show that each $S_{m,j}$ is a \slm\ set in $\osh(\cF)$. Suppose for a contradiction that there exists a \slm\ set $T$ in $\osh(\cF)$ such that $T\prec S_{m,j}$. From the definition of $\preceq$, $|T|=|S_{m,j}|=m+d+j$. Let the elements in $T$ be $t_1<t_2<\cdots<t_{m+d+j}$, and let the elements in $S_{m,j}$ be $s_1<s_2<\cdots<s_{m+d+j}$. Since $T\prec S_{m,j}$, there exists a minimal $i\in[m+d+j]$ such that $t_i<s_i$. If $i\in[m]$, then $\{2,4,\ldots,2i-2,2i-1\}\in\osh(\cF)$ as it is a subset of $T$. However, this contradicts Lemma~\ref{lemma:1gap}. If $i>m$ instead, then observe that $i=m+1$ because $s_{m+1},s_{m+2},\ldots,s_{m+d+j}$ are consecutive integers. Thus, $t_i=2i$ for all $i\in[m]$, and $t_{m+1}<2m+d$. 
Since $S_{m,j}$ is not a ballot set, $T$ cannot be a ballot set. In particular, we can find a minimum index $m+1\leq k\leq m+d+j$ such that $t_k\not=2k$. If $t_k=2k-1$, then this contradicts Lemma~\ref{lemma:1gap}, so $t_k\geq 2k+1\geq t_{k-1}+3$. Then, Lemma~\ref{lemma:dgap} implies that $t_k\geq t_{k-1}+d=2k+d-2$. If $k=m+1$, then $t_{m+1}\geq2m+d$, contradiction. If $k\geq m+2$, then $t_k\geq 2k+d-2>m+k+d-1=s_k$, contradicting $T\prec S_{m,j}$.

Next, we show that each $S'_{m,r}$ is a \slm\ set in $\osh(\cF)$. Suppose for a contradiction that there exists a \slm\ set $T$ in $\osh(\cF)$ such that $T\prec S'_{m,r}$. Then, $|T|=|S'_{m,r}|=m+2d+r$. Let the elements in $T$ be $t_1<t_2<\cdots<t_{m+2d+r}$, and let the elements in $S'_{m,r}$ be $s_1<s_2<\cdots<s_{m+2d+r}$. Since $T\prec S'_{m,r}$, there exists a minimal $i\in[m+2d+r]$ such that $t_i<s_i$. Like above, if $i\in[m]$, then $\{2,4,\ldots,2i-2,2i-1\}\in\osh(\cF)$, which contradicts Lemma~\ref{lemma:1gap}. If instead $m<i\leq m+2d-1$, then $i=m+1$ and $t_{m+1}<2m+d$. Like above, let $m+1\leq k\leq m+2d+r$ be the minimum index such that $t_k\not=2k$, which exists as $T\prec S_{m,r}'$ is not a ballot set. Note that $t_k=2k-1$ contradicts Lemma~\ref{lemma:1gap}, so $t_k\geq 2k+d-2$ by Lemma~\ref{lemma:dgap}. If $k=m+1$, then $t_{m+1}\geq 2m+d$, a contradiction. If $k\geq m+2$, then as $s_i<2i$ for every $m+d\leq i\leq m+2d+r$, we must have $k\leq m+d$. But then, $t_k\geq 2k+d-2>k+m+d-1=s_k$, a contradiction. Finally, if $i\geq m+2d$, then $t_i\leq s_i-1=2i-d-1$, so $T$ is not a $d$-ballot set, contradicting Proposition~\ref{prop:tmin}.

Now for the reverse, let $S$ be a non-ballot \slm\ set in $\osh(\cF)$. Let the elements in $S$ be $s_1<\cdots<s_\ell$ with $\ell=|S|$, and let $m\in[\ell]$ be maximal such that $s_i=2i$ for every $i\in[m]$. We claim that $s_{m+1}\geq 2m+d$. Indeed, the maximality of $m$ implies that $s_{m+1}\not=2m+2$. If $s_{m+1}=2m+1$, then $\{2,4,\ldots,2m,2m+1\}\in\osh(\cF)$, which contradicts Lemma~\ref{lemma:1gap}. If $2m+3\leq s_{m+1}<2m+d$, then this contradicts Lemma~\ref{lemma:dgap}.

Next, let $j=\tm(S)-1\geq0$. Then, there exists $m+1<h\leq\ell$ such that $s_h=2h-j-1$. But $s_h\geq s_{m+1}+(h-m-1)\geq m+d+h-1$, so $\ell\geq h\geq m+d+j$. 

If $\ell\leq m+2d-1$, then $S\succeq\{2,4,\ldots,2m,2m+d,\ldots,2m+d+\ell-m-1\}=S_{m,\ell-m-d}$. Since $S\in\osh\left(\binom{n}{a}\cup\binom{n}{a+d}\right)$, $\ell=|S|\leq\min\{a+d,n-a\}$. Combined with $\ell\geq m+d+j$ and $2m+d+\ell-m-1\leq n$, we get $\ell-m-d+1\leq M_1$. Hence, $S_{m,\ell-m-d}\in\osh(\cF)$ by Proposition~\ref{prop:oshevenconstruction}. But $S$ is $\preceq$-minimal, so $S=S_{m,\ell-m-d}$, with $\ell-m-d=j$ since $\tm(S)=j+1$.

If $\ell\geq m+2d$, set $r=\ell-m-2d$. Then, using $\tm(S)\leq d$, we get $S\succeq\{2,4,\ldots,2m,2m+d,2m+d+1,\ldots,2m+3d-2,2m+3d,\ldots,2m+3d+2r\}=S'_{m,r}$. Using $\ell=|S|\leq\min\{a+d,n-a\}$ and $2m+3d+2r\leq n$, we get $d\leq M_2$, so $S'_{m,r}\in\osh(\cF)$ by Proposition~\ref{prop:oshevenconstruction}. Therefore, the minimality of $S$ implies that $S=S'_{m,r}$.
\end{proof}

\begin{proof}[Proof of Theorem~\ref{thm:d>2ballot}]
Recall that by the $\ell=1$ case of Theorem~\ref{thm:ellwide}, $\binom{[n]}a\cup\binom{[n]}{a+d}$ order shatters all ballot sets of size up to $M$. Combined with Lemma~\ref{lemma:1gap}, the ballot \slm\ sets of size at most $M$ in $\osh\left(\binom{[n]}a\cup\binom{[n]}{a+d}\right)$ are exactly $\{B_m:0\leq m\leq M\}$. Thus, to prove Theorem~\ref{thm:d>2ballot}, it remains to determine the ballot \slm\ sets in $\osh\left(\binom{[n]}a\cup\binom{[n]}{a+d}\right)$ of size larger than $M$. If $a+d\leq n/2$, then no set in $\osh\left(\binom{[n]}a\cup\binom{[n]}{a+d}\right)$ has size larger than $a+d=M$. Similarly, if $a\geq n/2$, then no set in $\osh\left(\binom{[n]}a\cup\binom{[n]}{a+d}\right)$ has size larger than $n-a=M$. Therefore, it suffices to consider the case when $a<n/2<a+d$, and $M=\max\{a,n-a-d\}$.

In this case, by Proposition~\ref{prop:ballotconstruction}, if $0\leq m\leq a$ and $j\geq 1$ satisfy $m+j>M$ and $2m+2j\leq 2m+d+j-1\leq n$, then $B_{m,j}=\{2,4,\ldots,2m,2m+d,2m+d+1,\ldots,2m+d+j-1\}\in\osh\left(\binom{[n]}a\cup\binom{[n]}{a+d}\right)$. To finish the proof, we need to show they are \slm\ and they are the only ballot \slm\ sets of size larger than $M$. 

First, suppose there exists a \slm\ set $B'$ in $\osh\left(\binom{[n]}a\cup\binom{[n]}{a+d}\right)$ such that $B'\prec B_{m,j}$. 
Let the elements in $B'$ be $b_1'<\cdots<b_{m+j}'$ and let the elements in $B_{m,j}$ be $b_1<\cdots<b_{m+j}$. Let $i\in[m+j]$ be minimal such that $b_i'<b_i$. If $i\in[m]$, then $b_i'<2i$, contradicting Lemma~\ref{lemma:1gap}. If $i>m$, then $i=m+1$ as $b_{m+1},\ldots,b_{m+j}$ are consecutive integers. Since $|B'|=m+j>M$, we cannot have $b_k'=2k$ for every $k\in[m+j]$, so we can let $m+1\leq k\leq m+j$ be minimal such that $b_k'>2k$. Then, Lemma~\ref{lemma:dgap} implies that $b_k'\geq b_{k-1}'+d=2k-2+d$. If $k=m+1$, then $b_{m+1}'\geq2m+d=b_{m+1}$, a contradiction. If $k\geq m+2$, then $b_{m+j}'\geq b_k'+(m+j-k)\geq m+j+k-2+d\geq 2m+j+d$. However, $b_{m+j}'\leq b_{m+j}=2m+d+j-1$, a contradiction. Thus, such $B'$ does not exist, so $B_{m,j}$ is \slm.

Now, let $B$ be a ballot \slm\ set in $\osh\left(\binom{[n]}a\cup\binom{[n]}{a+d}\right)$ with $|B|>M$. Let the elements in $B$ be $b_1<\cdots<b_\ell$, let $0\leq m\leq\ell$ be maximum such that $b_i=2i$ for every $i\in[m]$, and let $j=\ell-m$. Then, $m\leq a$ by Lemma~\ref{lemma:equal-sizes} and the $\ell=1$ case of Theorem~\ref{thm:ellwide}. Thus, $j\geq 1$ as $m+j>M$. By Lemma~\ref{lemma:dgap}, $b_{m+1}\geq 2m+d$, and so $b_{m+i}\geq 2m+d+i-1$ for every $i\in[j]$. 

We now consider several cases depending on the size of $j$.

\textbf{Case 1.} $j\le d-1$. Then, $2m+2j\leq 2m+j+d-1\leq b_{m+j}\leq n$, so $B_{m,j}$ is order shattered by $\binom{[n]}a\cup\binom{[n]}{a+d}$ by Proposition~\ref{prop:ballotconstruction} and $B_{m,j}\preceq B$. Since $B$ is \slm, $B=B_{m,j}$.

\textbf{Case 2.} $j\geq d$. In this case we show that $B$ is not \slm. Since $B$ is a ballot set in $[n]$, $b_{m+i}\geq 2m+2i$ for every $d\leq i\leq j$, and thus $B^*=\{2,4,\ldots,2m,2m+d,2m+d+1,\ldots,2m+2d-2,2m+2d,2m+2d+2,\ldots,2m+2j\}\preceq B$. 

If $j\le 2d-1$, let $j'=j-d$ and consider the set $S_{m,j'}$ in Proposition~\ref{prop:oshevenconstruction}. Since $j\le 2d-1$, we have $j'\le d-1$. Furthermore, from $m+j=\ell=|B|\leq\min\{n-a,a+d\}$, we have $j'\le a-m$ and $j'\le n-m-a-d$. Finally, as $2m+2j\leq b_{m+j}\le n$ and $d\le j$, we conclude that $j\le n-2m-j\le n-2m-d$, and so $j'\le n-2m-2d$. Putting all of these together, we have $j'+1\leq M_1$, so $S_{m,j'}$ is order shattered by $\binom{[n]}a\cup\binom{[n]}{a+d}$ by Proposition~\ref{prop:oshevenconstruction}, and $S_{m,j'}\prec B^*\preceq B$, a contradiction.

If $j\geq 2d$, let $r=j-2d$ and consider the set $S'_{m,r}$ in Proposition~\ref{prop:oshevenconstruction}. Again, using $m+j\leq\min\{n-a,a+d\}$, we have $a-m-r=a-m-j+2d\geq d$ and $n-m-r-a-d=n-m-j-a+d\geq d$. Furthermore, using $2m+2j\leq n$, we have $n-2m-2r-2d=n-2m-2j+2d\geq d$. Therefore, $d\leq M_2$, and so $S'_{m,r}$ is order shattered by $\binom{[n]}a\cup\binom{[n]}{a+d}$ by Proposition~\ref{prop:oshevenconstruction}, and $S'_{m,r}\prec B^*\preceq B$, a contradiction.
\end{proof}

\bibliographystyle{alpha}
\bibliography{osh}

\end{document}